\newcommand{\bnew}[1]{{\color{black}{#1}}}
\begin{document}
\bibliographystyle{plain}

\title
{
Adaptive Finite Element Methods for Elliptic \\ 
Problems with Discontinuous Coefficients
}

\author{
Andrea Bonito\thanks{
Texas A\&M University, Department of Mathematics, TAMU 3368,
College Station, TX 77843, USA
({\tt bonito@math.tamu.edu}). 
}
\and
Ronald A. DeVore\thanks{
Texas A\&M University, Department of Mathematics, TAMU 3368,
College Station, TX 77843, USA
({\tt rdevore@math.tamu.edu}). 
}
\and
Ricardo H. Nochetto\thanks{
University of Maryland, Department of Mathematics and Institute for
Physical Science and Technology, College Park,
MD 20742, USA
({\tt rhn@math.umd.edu}). 
}
}

\newtheorem{prop}{Proposition}
\newtheorem{cor}{Corollary}
\newtheorem{remark}{Remark}
\newtheorem{example}{Example}
\newtheorem{proper}{Properties}
\newtheorem{assumption}{Assumption}
%
\def\RR{\rm \hbox{I\kern-.2em\hbox{R}}}
\def\NN{\rm \hbox{I\kern-.2em\hbox{N}}}
\def\ZZ{\rm {{\rm Z}\kern-.28em{\rm Z}}}
\def\CC{\rm \hbox{C\kern -.5em {\raise .32ex \hbox{$\scriptscriptstyle
|$}}\kern
-.22em{\raise .6ex \hbox{$\scriptscriptstyle |$}}\kern .4em}}
\def\vp{\varphi}
\def\<{\langle}
\def\>{\rangle}
\def\t{\tilde}
\def\i{\infty}
\def\e{\varepsilon}
\def\sm{\setminus}
\def\nl{\newline}
\def\o{\overline}
\def\wt{\widetilde}
\def\wh{\widehat}
\def\fT{{\mathfrak{T}}}
\def\cT{{\cal T}}
\def\cA{{\cal A}}
\def\cI{{\cal I}}
\def\cV{{\cal V}}
\def\cB{{\cal B}}
\def\cR{{\cal R}}
\def\cD{{\cal D}}
\def\cP{{\cal P}}
\def\cJ{{\cal J}}
\def\cM{{\cal M}}
\def\cO{{\cal O}}
\def \mod {\rm mod}
\def\Chi{\raise .3ex
\hbox{\large $\chi$}} \def\vp{\varphi}
\def\lsima{\hbox{\kern -.6em\raisebox{-1ex}{$~\stackrel{\textstyle<}{\sim}~$}}\kern -.4em}
\def\lsim{\hbox{\kern -.2em\raisebox{-1ex}{$~\stackrel{\textstyle<}{\sim}~$}}\kern -.2em}
\def\[{\Bigl [}
\def\]{\Bigr ]}
\def\({\Bigl (}
\def\){\Bigr )}
\def\[{\Bigl [}
\def\]{\Bigr ]}
\def\({\Bigl (}
\def\){\Bigr )}
\def\L{\pounds}
\def\pr{{\rm Prob}}
\newcommand{\cs}[1]{{\color{magenta}{#1}}}
\def\ds{\displaystyle}
\def\ev#1{\vec{#1}}     
\newcommand{\lt}{\ell^{2}(\nabla)}
\def\Supp#1{{\rm supp\,}{#1}}
\def\R{\mathbb{R}}
\def\E{\mathbb{E}}
\def\nl{\newline}
\def\T{{\relax\ifmmode I\!\!\hspace{-1pt}T\else$I\!\!\hspace{-1pt}T$\fi}}
\def\N{\mathbb{N}}
\def\Z{\mathbb{Z}}
\def\N{\mathbb{N}}
\def\Zd{\Z^d}
\def\Q{\mathbb{Q}}
\def\C{\mathbb{C}}
\def\Rd{\R^d}
\def\gsim{\mathrel{\raisebox{-4pt}{$\stackrel{\textstyle>}{\sim}$}}}
\def\sime{\raisebox{0ex}{$~\stackrel{\textstyle\sim}{=}~$}}
\def\lsim{\raisebox{-1ex}{$~\stackrel{\textstyle<}{\sim}~$}}
\def\div{\mbox{ div }}
\def\M{M}  \def\NN{N}                  
\def\L{{\ell}}               
\def\Le{{\ell^1}}            
\def\Lz{{\ell^2}}
\def\Let{{\tilde\ell^1}}     
\def\Lzt{{\tilde\ell^2}}
\def\Ltw{\ell^\tau^w(\nabla)}
\def\t#1{\tilde{#1}}
\def\la{\lambda}
\def\La{\Lambda}
\def\ga{\gamma}
\def\BV{{\rm BV}}
\def\Ga{\eta}
\def\al{\alpha}
\def\cZ{{\cal Z}}
\def\cA{{\cal A}}
\def\cU{{\cal U}}
\def\argmin{\mathop{\rm argmin}}
\def\argmax{\mathop{\rm argmax}}
\def\prob{\mathop{\rm prob}}

\def\cO{{\cal O}}
\def\cA{{\cal A}}
\def\cC{{\cal C}}
\def\cF{{\cal F}}
\def\bu{{\bf u}}
\def\bz{{\bf z}}
\def\bZ{{\bf Z}}
\def\bI{{\bf I}}
\def\cE{{\cal E}}
\def\cD{{\cal D}}
\def\cG{{\cal G}}
\def\cI{{\cal I}}
\def\cJ{{\cal J}}
\def\cM{{\cal M}}
\def\cN{{\cal N}}
\def\cT{{\cal T}}
\def\cU{{\cal U}}
\def\cV{{\cal V}}
\def\cW{{\cal W}}
\def\cL{{\cal L}}
\def\cB{{\cal B}}
\def\cG{{\cal G}}
\def\cK{{\cal K}}
\def\cS{{\cal S}}
\def\cP{{\cal P}}
\def\cQ{{\cal Q}}
\def\cR{{\cal R}}
\def\cU{{\cal U}}
\def\bL{{\bf L}}
\def\bl{{\bf l}}
\def\bK{{\bf K}}
\def\bC{{\bf C}}
\def\X{X\in\{L,R\}}
\def\ph{{\varphi}}
\def\D{{\Delta}}
\def\H{{\cal H}}
\def\bM{{\bf M}}
\def\bx{{\bf x}}
\def\bj{{\bf j}}
\def\bG{{\bf G}}
\def\bP{{\bf P}}
\def\bW{{\bf W}}
\def\bT{{\bf T}}
\def\bV{{\bf V}}
\def\bv{{\bf v}}
\def\bt{{\bf t}}
\def\bz{{\bf z}}
\def\bw{{\bf w}}
\def \span{{\rm span}}
\def \meas {{\rm meas}}
\def\rhom{{\rho^m}}
\def\lll{\langle}
\def\argmin{\mathop{\rm argmin}}
\def\argmax{\mathop{\rm argmax}}

\newcommand{\bcT}{{ \bolfcal T}}
\def\dJ{\nabla}
\newcommand{\ba}{{\bf a}}
\newcommand{\bb}{{\bf b}}
\newcommand{\bc}{{\bf c}}
\newcommand{\bd}{{\bf d}}
\newcommand{\bs}{{\bf s}}
\newcommand{\bff}{{\bf f}}
\newcommand{\bp}{{\bf p}}
\newcommand{\bg}{{\bf g}}
\newcommand{\by}{{\bf y}}
\newcommand{\br}{{\bf r}}
\newcommand{\be}{\begin{equation}}
\newcommand{\ee}{\end{equation}}
\newcommand{\bea}{$$ \begin{array}{lll}}
\newcommand{\eea}{\end{array} $$}
\def \Vol{\mathop{\rm  Vol}}
\def\fT{{\mathfrak{T}}}
\def \mes{\mathop{\rm mes}}
\def \Prob{\mathop{\rm  Prob}}
\def \exp{\mathop{\rm    exp}}
\def \sign{\mathop{\rm   sign}}
\def \sp{\mathop{\rm   span}}
\def \vphi{{\varphi}}
\def \csp{\overline \mathop{\rm   span}}
\newcommand{\KL}{Karh\'unen-Lo\`eve }
%
\newcommand{\beqn}{\begin{equation}}
\newcommand{\eeqn}{\end{equation}}

\newenvironment{Proof}{\noindent{\bf Proof:}\quad}{\end{proof}}

\renewcommand{\theequation}{\thesection.\arabic{equation}}
\renewcommand{\thefigure}{\thesection.\arabic{figure}}

\makeatletter
\@addtoreset{equation}{section}
\makeatother

\newcommand\abs[1]{\left|#1\right|}
\newcommand\clos{\mathop{\rm clos}\nolimits}
\newcommand\trunc{\mathop{\rm trunc}\nolimits}
\renewcommand\d{d}
\newcommand\dd{d}
\renewcommand\diag{\mathop{\rm diag}}
\newcommand\dist{\mathop{\rm dist}}
\newcommand\diam{\mathop{\rm diam}}
\newcommand\cond{\mathop{\rm cond}\nolimits}
\newcommand\eref[1]{{\rm (\ref{#1})}}
\newcommand{\iref}[1]{{\rm (\ref{#1})}}
\newcommand\Hnorm[1]{\norm{#1}_{H^s([0,1])}}
\def\int{\intop\limits}
\renewcommand\labelenumi{(\roman{enumi})}
\newcommand\lnorm[1]{\norm{#1}_{\ell^2(\Z)}}
\newcommand\Lnorm[1]{\norm{#1}_{L_2([0,1])}}
\newcommand\LR{{L_2(\R)}}
\newcommand\LRnorm[1]{\norm{#1}_\LR}
\newcommand\Matrix[2]{\hphantom{#1}_#2#1}
\newcommand\norm[1]{\left\|#1\right\|}
\newcommand\ogauss[1]{\left\lceil#1\right\rceil}
\newcommand{\QED}{\hfill
\raisebox{-2pt}{\rule{5.6pt}{8pt}\rule{4pt}{0pt}}%
  \smallskip\par}
\newcommand\Rscalar[1]{\scalar{#1}_\R}
\newcommand\scalar[1]{\left(#1\right)}
\newcommand\Scalar[1]{\scalar{#1}_{[0,1]}}
\newcommand\Span{\mathop{\rm span}}
\renewcommand\supp{\mathop{\rm supp}}
\newcommand\ugauss[1]{\left\lfloor#1\right\rfloor}
\newcommand\with{\, : \,}
\newcommand\Null{{\bf 0}}
\newcommand\bA{{\bf A}}
\newcommand\bB{{\bf B}}
\newcommand\bR{{\bf R}}
\newcommand\bD{{\bf D}}
\newcommand\bE{{\bf E}}
\newcommand\bF{{\bf F}}
\newcommand\bH{{\bf H}}
\newcommand\bU{{\bf U}}
\newcommand\cH{{\cal H}}
\newcommand\sinc{{\rm sinc}}
\def\enorm#1{| \! | \! | #1 | \! | \! |}

\newcommand{\dm}{\frac{d-1}{d}}

\let\bm\bf
\newcommand{\bbeta}{{\mbox{\boldmath$\beta$}}}
\newcommand{\bal}{{\mbox{\boldmath$\alpha$}}}
\newcommand{\bbi}{{\bm i}}

\def\nnew{\color{Red}}
\def\mnew{\color{Blue}}

\newcommand{\dI}{\Delta}
%
%


\def\Vb{\mathbb V}
\def\V{\mathbb V}
\def\P{\mathbb P}
\def\Sb{\mathbb S}
\def\na{\nabla}
\def\bq{\mathbf q}

\newcommand{\what}{\widehat}
\newcommand{\veps}{\varepsilon}

\def\eps{\veps}
\def\GREEDY{\textsf{GREEDY}\xspace}
\def\SOLVE{\textsf{SOLVE}\xspace}
\def\MARK{\textsf{MARK}\xspace}
\def\REFINE{\textsf{REFINE}\xspace}
\def\ESTIMATE{\textsf{ESTIMATE}\xspace}
\def\MAKECON{\textsf{MAKECONFORM}\xspace}
\def\REDUCEOSC{\textsf{ADAPTDATA}\xspace}
\def\UPDATE{\textsf{UPDATE}\xspace}
\def\OSC{\textsf{DATA}\xspace}
\def\MARKDATA{\textsf{MARK\_DATA}\xspace}
\def\AFEM{\textsf{AFEM}\xspace}
\def\MAIN{\textsf{PDE}\xspace}
\def\COEFF{\textsf{COEFF}\xspace}
\def\COEFFQ{\textsf{COEFFQ}\xspace}
\def\APPROX{\textsf{RHS}\xspace}
\def\DISC{\textsf{DISC}\xspace}
\def\GAL{\textsf{GAL}\xspace}
\def\pA{q}
\def\CONDP{{\bf Condition~$\bp$ }}
\def\CONF{\textsf{CONF}\xspace}
\newcommand{\osc}{\textrm{osc}}
\newcommand{\As}[1][s]{\ensuremath{\mathbb{A}_{#1}}} 
\newenvironment{algotab}%
{\par\begin{samepage}%
\begin{tabbing}\ttfamily%
 \hspace*{5mm}\=\hspace{3ex}\=\hspace{3ex}\=\hspace{3ex}\=\hspace{3ex}%
\=\hspace{3ex}\=\hspace{3ex}\=\hspace{3ex}\=\hspace{3ex}\kill}%
{\end{tabbing}\end{samepage}}

\maketitle
\date{}

 \begin{abstract}
 Elliptic partial differential equations (PDEs) with discontinuous diffusion
 coefficients occur in application domains such as diffusions  through porous
 media, electro-magnetic field propagation on heterogeneous media, and diffusion
 processes on rough surfaces.
 The standard approach to numerically treating such problems using finite element methods is to assume that the discontinuities
 lie on the boundaries of the cells in the  initial triangulation.  However, this does not match applications where discontinuities occur on curves, surfaces,
 or manifolds, and could even be unknown  beforehand.  One of the obstacles to treating such  discontinuity problems is that the usual perturbation theory for elliptic PDEs assumes bounds for
 the distortion of the coefficients in the $L_\infty$ norm and this in turn requires that  the discontinuities are matched exactly when the coefficients are approximated.
 We present a new approach based on distortion of the coefficients in an $L_q$ norm with $q<\infty$ which  therefore does not require the exact matching of the 
discontinuities.   We then use this new distortion theory
 to formulate new adaptive finite element methods (AFEMs) for such 
 discontinuity problems.
 We show that such AFEMs are optimal in the sense of
distortion versus number of computations,  and report insightful 
numerical results supporting our analysis.
\end{abstract}

\begin{keywords} 
Elliptic Problem, Discontinuous Coefficients, Perturbation Estimates, Adaptive Finite Element Methods, Optimal Rates of Convergence.
\end{keywords}

\begin{AMS}
65N30, 65N15, 41A25; 65N50, 65Y20.
\end{AMS}

 \section{Introduction}   We consider elliptic partial differential equations of the following form
\begin{equation}
 \begin{aligned}
 \label{problem}
 -{\rm div}(A\nabla u)&=f,\quad {\rm on}\ \Omega\\
 u&= 0,\quad {\rm on}\  \partial \Omega.
 \end{aligned}
\end{equation}
 where $\Omega$ is a polyhedral  domain in $\R^d$, $d\geq 1$ integer, and $A=(a_{ij})_{i,j=1}^d$ is a $d\times d$ positive definite matrix of $L_\infty(\Omega)$ functions.  
 
  We let  $|\cdot |$ denote the Euclidean norm on $\R^d$ and when  $w:\Omega\to  \R^d$ is a vector valued function defined on $\Omega$ then we set
 \be
 \label{nc1}
 \|w\|_{L_p(\Omega)}:=\| ~ |w| ~ \|_{L_p(\Omega)},
 \ee
 for each $0<p\leq \infty$.
Similarly, if $B$ is any $d\times d$ matrix, then  $\|B\|$ denotes its spectral norm (its norm as an operator from $\ell_2(\R^d)$ to itself).
If $B$ is a matrix valued function on $\Omega$ then we define the norms
 \be
 \label{nc2}
 \|B\|_{L_p(\Omega)}:=\| ~ \|B\| ~ \|_{L_p(\Omega)}.
 \ee
 By redefining the $a_{ij}$  on a set of measure zero, we  may assume that each $a_{ij}$ is defined everywhere on $\Omega$ and  
 \be
 \label{boundedA}
 \|A(x)\|\le \|A\|_{L_\infty(\Omega)}, \quad x\in\Omega.
 \ee

  As usual, we  interpret \eref{problem}
 in the  weak sense and use the Lax-Milgram theory for existence and uniqueness.  Accordingly, we let $H_0^1(\Omega)$ be the Sobolev space  of real valued functions on $\Omega$ which vanish on the boundary of $\Omega$ equipped with  the norm
 \be
 \label{Hnorm}
 \|v\|_{H_0^1(\Omega)}:=\|\nabla v\|_{L_2(\Omega)}
 \ee
and we define the quadratic form
 \be
 \label{qf}
 a(u,v):=\int_\Omega (A \nabla u)\cdot \nabla v,\quad u,v \in H_0^1(\Omega).
 \ee
 Throughout, we shall use   $a\cdot b$ to denote the   inner product of  vectors $a$ and $b$.
 
 To ensure uniform ellipticity, we assume that $A$ is symmetric and uniformly positive definite a.e.
 on $\Omega$.  Again, without loss of generality, we can redefine $A$ on a set of measure zero so
 that $A(x)$ is uniformly positive definite everywhere on $\Omega$.      Given a positive definite, symmetric matrix $B$, we  denote by $ \lambda_{\min}(B)$ its   smallest eigenvalue and by $\lambda_{\max}(B)$ its largest eigenvalue.
 In the case that $B$ is a function of $x\in\Omega$, we define
$$
 \lambda_{\min}(B):=\inf_{x\in \Omega}\lambda_{\min}(B(x)),
 $$
 and
 $$  \lambda_{\max}(B):=\sup_{x\in \Omega}\lambda_{\max}(B(x))=\|\lambda_{\max}(B(\cdot))\|_{L_\infty(\Omega)}=\|B\|_{L_\infty(\Omega)}.
 $$
 It follows that
 \be
 \label{pd}
 \lambda_{\min} (A)|y|^2\le y^t A(x) y\le \lambda_{\max}(A) |y|^2 ,\quad  \forall x\in \Omega, \ y\in \R^d.
 \ee
 Let us also note that \eref{pd} implies  
 \be
 \label{coercive}
 \lambda_{\min}(A)\|v\|^2_{H_0^1(\Omega)}\le a(v,v)\le  \lambda_{\max}(A)\|v\|^2_{H_0^1(\Omega)},
 \ee
 for all $v\in H_0^1(\Omega)$.  That is, the energy norm induced by 
$a(\cdot,\cdot)$ is equivalent to the $H_0^1$ norm.
       
 Given $f\in H^{-1}(\Omega):=H_0^1(\Omega)^*$  (the dual space
   of $H_0^1(\Omega)$), the Lax-Milgram theory implies the existence of a
   unique $u=u_f \in H_0^1(\Omega)$ such that
   \be
   \label{wproblem}
   a(u,v)=\langle f,v\rangle,\quad v\in H_0^1(\Omega),
   \ee
   where $\langle f,v\rangle$ is the $H^{-1}-H_0^1 $ dual pairing.
    
     Practical numerical algorithms for solving \eref{wproblem}, i.e. finding an
    approximation to $u$ in $H_0^1(\Omega)$ to any prescribed accuracy
    $\veps$,  begin by approximating $f$ by an $\hat f$ and $A$ by
    an $\hat A$; this is the case, for example, when quadrature rules are applied.  
    To analyze the performance of such an algorithm
    therefore requires an estimate for the effect of such a
    replacement.  The usual form of such a perturbation result is the
    following (see e.g. \cite{LarssonThomee}).  Suppose that both $A,\hat A$ are
    symmetric, positive definite and satisfy 
    \begin{equation}
      \label{eigenbound} 
     r\le \lambda_{\min }(A),\lambda_{\max}(A)\le M,  \qquad\hat r\le \lambda_{\min }(\hat A),\lambda_{\max}(\hat A)\le \hat M,    
     \end{equation}
    for some $0<r\le M<\infty$ and $0<\hat r \le \hat M < \infty$.  Then,

    \begin{equation}\label{perturbation1}
\|u  - \hat u\|_{H_0^1(\Omega)} \leq 
 \hat r^{-1} \Big(\| f-\hat f\|_{H^{-1}(\Omega)} 
+ r^{-1}\|A-\hat A\|_{L_\infty(\Omega)}\|f\|_{H^{-1}(\Omega)}\Big),
\end{equation}
where $\hat u \in H^1_0(\Omega)$ is the solution of \eqref{wproblem} with
diffusion matrix $\hat A$ and right hand side $\hat f$. 
 If $A$ has discontinuities, then for  \eref{perturbation1} to be useful,  the approximation $\hat A$ would have to match these discontinuities in order for the right
 side to be small.  In many applications, the discontinuities of $A$ are either unknown or lie along curves and surfaces
 which cannot be captured exactly.  This precludes the direct use of \eref{perturbation1} in the construction and analysis of numerical methods for \eref{problem}.  
 
  The first  goal of the present paper is to describe a perturbation theory, given in Theorem \ref{T:perturbation} of \S\ref{T:perturbation}, which replaces \eref{perturbation1} by the bound
   \be 
   \label{goal1}
\|u-\hat u\|_{H_0^1(\Omega)}\le \hat r ^{-1}
\Big(\|f-\hat f\|_{H^{-1}(\Omega)}+  \|\nabla u\|_{L_{p}(\Omega)} \|A-\hat A\|_{L_q(\Omega)}\Big), \quad q:=\frac{2p}{p-2}\ee
provided $\nabla u\in L_p(\Omega)$ for some $p\ge 2$. Notice that when
$p=2$ this estimate is of the same form as   \eref{perturbation1}
because $\|\nabla u\|_{L_2(\Omega)}\le r^{-1}\|f\|_{H^{-1}(\Omega)}$.
The advantage of \eref {goal1} over 
\eref{perturbation1} is that we do not have to match the discontinuities of $A$
exactly for the right side to be small.
Note however that we still require bounds on the eigenvalues of $A$ and in
particular $A \in L^\infty(\Omega)$.

However, estimate \eqref{goal1} exhibits an asymmetry in the
dependency of the eigenvalues of $A$ and $\hat A$ and requires additional
assumptions on the right side $f$ to guarantee that $\nabla u \in L_p(\Omega)$.
This issue is discussed in \S\ref{ss:Lp}.  It turns out that there is a
range of $p>2$, depending only on $\Omega$ and the constants $r,M$ such that $f\in
W^{-1}(L_p(\Omega))$ (the dual of $W^1_{\bnew{0}}(L_{\frac{p}{p-1}}(\Omega))$) implies $\nabla u\in L_p(\Omega)$ and so the estimate
\eref{goal1} can be applied for such $f$.  The restriction that $f\in W^{-1}(L_p(\Omega))$, for some $p>2$, is quite mild and is met by all applications that we envisage.

 The second goal of this paper, is to develop an adaptive finite element method (AFEM)
  applicable to \eref{wproblem} \bnew{primarily} when $A$ possesses
 discontinuities not aligned with the meshes and thus not resolved by
 the finite element approximation in $L_\infty$.
 \bnew{Although piecewise polynomial approximation of $A$ beyond
   piecewise constant is unnecessary
   for the foremost example of discontinuous diffusion coefficients
   across a Lipschitz co-dimension one manifold, we
   emphasize that our theory and algorithm apply to any polynomial
   degree. Higher order approximations of $A$ may indeed be relevant in
   dealing with $A$'s with point discontinuities (see Section 5 in \cite{Me63}) or 
   $A$'s which are piecewise smooth.}

 We develop \bnew{AFEM} based on newest vertex
  bisection in \S\ref{s:AFEM} and prove that  our  method has a certain
 optimality in terms of rates of convergence.  We note that it is convenient  to
 restrict our discussion to   newest vertex subdivision and the case $d=2$ for
 notational reasons.  
However, all of our results hold for more general $d\ge 2$ and 
other refinement procedures  such as those discussed in \cite{BN:10}.
 
  The adaptive algorithm that we propose and analyze is based on three subroutines \APPROX, \COEFF, and \MAIN.  The first of these gives an approximation to $f$ using piecewise polynomials.
  This type of  approximation of $f$ is quite standard in AFEMs.  The subroutine \COEFF  produces an
   approximation $\hat A$ to $A$ in $L_q$.   We need,  however,
 that $\hat A$ is uniformly positive definite with bounds on the
 eigenvalues of $\hat A$ comparable to the bounds assumed on $A$, 
a restriction that seems on the surface to be in conflict with approximation
 in $L_q$. \bnew{The only exception is piecewise
   constant $\hat A$'s because then $\hat A$ can be taken to be the meanvalue
   of $A$ elementwise for all $q\ge2$; see \S \ref{s:numerics}.}
 We show in \S\ref{SapproxA} that on a theoretical level the
 restriction of positive definiteness of $\hat A$  
 does not effect the approximation order in $L_q$.  However, the derivation of
 numerically implementable algorithms which  ensure positive definiteness and
 perform optimally in terms of $L_q(\Omega)$ approximation is a more subtle
 issue because there is a need to clarify in what sense $A$ is provided to us.  
 We leave this aspect as an open area for further study.
Finally, we denote by \MAIN  the standard AFEM method \cite{NSV:09}, but
based on the approximate right hand side  $\hat f$
and diffusion coefficient $\hat A$ provided by \APPROX and \COEFF.
 
We end this paper by providing two insightful numerical 
experiments on the performance of the new algorithm along with the key fact
that \eqref{goal1} can be applied locally.
  
 \section{ Perturbation Argument}\label{S:perturbation}
 \label{S:Perturbation}
 In this section, we prove a perturbation theorem which allows for the
 approximation of $A$ to take place  in a norm weaker than $L_\infty$.
As we shall see, this in turn requires $\nabla u \in L_p(\Omega)$ for some
$p>2$.
Validity of such bounds is discussed in \S\ref{ss:Lp}.

\subsection{The Perturbation Theorem}\label{SS:multidimensional}

 Let $A, \hat A\in [L_\infty(\Omega)]^{d\times d}$ be symmetric, positive
definite matrices satisfying \eqref{eigenbound},
for some $r, \hat r>0$ and some $M, \hat M < \infty$, and let
$f, \hat f\in H^{-1}(\Omega)$.
Let $u, \hat u \in H^1_0(\Omega)$ be the solution of
\eqref{wproblem} and of the perturbed problem
\begin{equation}
\label{pproblem}
\int_\Omega (\hat A\nabla \hat u) \cdot \nabla v = \langle \hat f, v \rangle , \qquad \forall
v \in H^1_0(\Omega).
\end{equation}
 We now prove that the map $A\mapsto u$ is Lipschitz continuous from
$L_q(\Omega)$ to $H^1_0(\Omega)$. This map is shown to be continuous
in \cite[\S 8, Theorem 3.1]{DelfourZolesio}.

\begin{theorem}[perturbation theorem]\label{T:perturbation}
 For any $p\ge 2$, the functions $u$ and $\hat u$ satisfy
 \be 
 \label{PT1}
\|u-\hat u\|_{H_0^1(\Omega)}\le  \hat r^{-1}\|f-\hat
f\|_{H^{-1}(\Omega)}+ {\hat r}^{-1} \|\nabla u\|_{L_{p}(\Omega)} \|A-\hat
A\|_{L_q(\Omega)},\quad q:=\frac{2p}{p-2} \in [2,\infty]\ee
provided $\nabla u\in L_p(\Omega)$.
\end{theorem}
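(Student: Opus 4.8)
The plan is to work with the difference $e := u - \hat u \in H^1_0(\Omega)$ and exploit coercivity of the bilinear form associated with $\hat A$. First I would write, using \eqref{pproblem} and \eqref{coercive} applied to $\hat A$ (which gives $\hat r \|e\|_{H_0^1(\Omega)}^2 \le \int_\Omega (\hat A \nabla e)\cdot \nabla e$),
\be
\hat r \|e\|_{H_0^1(\Omega)}^2 \le \int_\Omega (\hat A \nabla u)\cdot \nabla e - \int_\Omega (\hat A \nabla \hat u)\cdot \nabla e = \int_\Omega (\hat A \nabla u)\cdot \nabla e - \langle \hat f, e\rangle.
\ee
Next I would insert $\langle f, e\rangle = \int_\Omega (A\nabla u)\cdot \nabla e$ from \eqref{wproblem} to split the right-hand side into a data term and a coefficient term:
\be
\hat r \|e\|_{H_0^1(\Omega)}^2 \le \langle f - \hat f, e\rangle + \int_\Omega \big((\hat A - A)\nabla u\big)\cdot \nabla e.
\ee

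For the first term I would bound $\langle f - \hat f, e\rangle \le \|f - \hat f\|_{H^{-1}(\Omega)} \|e\|_{H_0^1(\Omega)}$ by definition of the dual norm. For the second term, the key step is to estimate the integrand pointwise: at each $x$, $\big|\big((\hat A - A)(x)\nabla u(x)\big)\cdot \nabla e(x)\big| \le \|(\hat A-A)(x)\| \, |\nabla u(x)| \, |\nabla e(x)|$ using the spectral norm. Then I would apply Hölder's inequality with the three exponents $q$, $p$, and $2$, which satisfy $\frac1q + \frac1p + \frac12 = 1$ precisely because $q = \frac{2p}{p-2}$ (when $p=2$ this degenerates to $q=\infty$ and the $L_q$ factor becomes an $L_\infty$ bound, matching \eqref{perturbation1}); this yields
\be
\int_\Omega \big|\big((\hat A - A)\nabla u\big)\cdot \nabla e\big| \le \|A - \hat A\|_{L_q(\Omega)} \, \|\nabla u\|_{L_p(\Omega)} \, \|\nabla e\|_{L_2(\Omega)} = \|A-\hat A\|_{L_q(\Omega)}\|\nabla u\|_{L_p(\Omega)}\|e\|_{H_0^1(\Omega)}.
\ee
Combining the two estimates gives $\hat r \|e\|_{H_0^1(\Omega)}^2 \le \big(\|f-\hat f\|_{H^{-1}(\Omega)} + \|\nabla u\|_{L_p(\Omega)}\|A-\hat A\|_{L_q(\Omega)}\big)\|e\|_{H_0^1(\Omega)}$, and dividing through by $\|e\|_{H_0^1(\Omega)}$ (the inequality being trivial if $e=0$) produces \eqref{PT1}.

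The only genuine subtlety — rather than an obstacle — is checking that all the integrals are finite so that Hölder applies: this is exactly where the hypothesis $\nabla u \in L_p(\Omega)$ enters, since $A - \hat A \in L_\infty(\Omega) \subset L_q(\Omega)$ on the bounded domain $\Omega$ and $\nabla e \in L_2(\Omega)$ automatically. One should also note the arithmetic $\frac1q = \frac12 - \frac1p = \frac{p-2}{2p}$ to confirm $q \in [2,\infty]$ as $p$ ranges over $[2,\infty]$. No regularity beyond membership in the stated spaces is needed, and the constant is exactly $\hat r^{-1}$ with no hidden dependence on $r$ or $M$.
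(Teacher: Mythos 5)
Your proof is correct. The core mechanism is the same as the paper's -- coercivity of the $\hat A$-form applied to an error equation, followed by the three-exponent H\"older inequality with $\frac1q+\frac1p+\frac12=1$ -- but your decomposition is genuinely different. The paper introduces an intermediate solution $\bar u$ (the solution with coefficient $\hat A$ and right side $f$), bounds $\|\hat u-\bar u\|_{H_0^1(\Omega)}$ by the classical estimate \eref{perturbation1} and $\|u-\bar u\|_{H_0^1(\Omega)}$ by the new H\"older argument, and concludes by the triangle inequality. You instead test directly with $e=u-\hat u$ and split the right-hand side algebraically into $\langle f-\hat f,e\rangle$ plus $\int_\Omega((\hat A-A)\nabla u)\cdot\nabla e$, handling both perturbations in a single coercivity step. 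Your route is slightly more economical (no auxiliary problem, no appeal to \eref{perturbation1}) and yields the identical constant $\hat r^{-1}$ on both terms; the paper's two-step version has the mild expository advantage that the coefficient perturbation is isolated in its own estimate \eref{Tm2}, which is the piece reused later (e.g.\ in the local version of Remark \ref{r:local} and in the heuristic analysis of \S 6.3). Your remarks on integrability and on the degenerate case $p=2$, $q=\infty$ are accurate and complete the argument.
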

\begin{proof}  Let $\bar u$ be the solution to \eref{problem} with diffusion matrix $\hat A$ and right side $f$.  Then, from the perturbation estimate \eref{perturbation1}, we have
 \be
 \label{TP1}
 \|\hat u-\bar u\|_{H_0^1(\Omega)}\le \hat r^{-1}\|f-\hat f\|_{H^{-1}(\Omega)}.
 \ee
We are therefore left with bounding $\|u-\bar u\|_{H_0^{1}(\Omega)}$.  From the definition of $u$ and $\bar u$, we have
$$
\int_\Omega  (A \nabla u) \cdot \nabla v = 
\int_\Omega  (\hat A  \nabla  \bar u)\cdot \nabla v,
$$
for all $v\in H_0^1(\Omega)$.   This gives
$$
\int_\Omega [\hat A\nabla( u-\bar u)]\cdot \nabla v=\int_\Omega [(\hat A-A)\nabla u]\cdot \nabla v,
$$
for all $v\in H_0^1(\Omega)$.  Taking $v=u-\bar u$, we obtain
$$
\int_\Omega  [\hat A\nabla(u-\bar u) ]\cdot \nabla( u- \bar u) = \int_\Omega
[(\hat A-A) \nabla u]\cdot  \nabla (u-\bar u)\le 
\bnew{\|(\hat A-A) \nabla u \|_{L_2(\Omega)} \|\nabla (u-\bar u)\|_{L_2(\Omega)}.}
$$
\bnew{If we use the coercivity estimate \eref{coercive} with $A$ replaced
by $\hat A$,  then we deduce
$$
\hat r\|u-\bar u\|_{H_0^1(\Omega)} \le \|(A-\hat A)\nabla u\|_{L_2(\Omega)}.
$$ 
Applying H\"older inequality to the right side with $p\ge 2$ and
$q=2p/(p-2)$ we arrive at
\begin{equation}\label{Tm2}
\|u-\bar u\|_{H_0^1(\Omega)}\le  {\hat r}^{-1} \|\nabla u\|_{L_{p}(\Omega)} \|A-\hat A\|_{L_{q}(\Omega)}.
\end{equation}
}
Combining this with \eref{TP1}, we infer that
\begin{eqnarray*}
\|u-\hat u\|_{H_0^1(\Omega)}&\le&  \|u-\bar u\|_{H_0^1(\Omega)}+ \| \bar u-\hat
u\|_{H_0^1(\Omega)}\\
&\le & \hat r^{-1} \|\nabla u\|_{L_{p}(\Omega)} \|A-\hat A\|_{L_{q}(\Omega)} +\hat r ^{-1}\|f-\hat f\|_{H^{-1}(\Omega)},\nonumber
\end{eqnarray*}
as desired.  
\end{proof}

\begin{remark}[local perturbation estimates]\label{r:local}
\rm We point out that the choice of $p$ in the perturbation estimate \eqref{PT1}
could be different from one subdomain of $\Omega$ to another.
To fix ideas, assume that $\Omega$ is decomposed  into two subdomains $\Omega_1$
and $\Omega_2$.
Similar arguments as provided in the previous lemma yield
$$
\| u- \hat u\|_{H^1_0(\Omega)} \leq \hat r^{-1}\|f-\hat f\|_{H^{-1}(\Omega)}+
\hat r^{-1} \| A-\hat A\|_{L_{q_1}(\Omega_1)}  \| \nabla
u \|_{L_{p_1}(\Omega_1)} + \hat r^{-1} \| A-\hat A\|_{L_{q_2}(\Omega_2)}  \| \nabla
u \|_{L_{p_2}(\Omega_2)},
$$
where $p_i \in [2,\infty]$ and $q_i=2p_i/(p_i-2)$, $i=1,2$.
As we shall see in \S\ref{s:numerics}, this turns out to be critical
when the jump in the coefficients takes place in a subdomain
 $\Omega_i$ with the solution
$u\in W^1_\infty(\Omega_i)$, thereby allowing to take $p_i=\infty$.
\end{remark}
 
 \subsection{Sufficient conditions for $\nabla u$ to be in
   $L_p$}\label{ss:Lp}
 In order for Theorem \ref{T:perturbation} to be relevant we need that $\nabla u$ is in $L_p$ for some
 $p>2$.   It is therefore of interest to know of sufficient conditions on $A$ and the right side $f$ for this to be the case. In this section, we shall recall some known results in this direction.  
 
 From the Lax-Milgram theory, we know that  the solution operator boundedly maps
 $H^{-1}(\Omega)$ into $H^1_0(\Omega)$.   It is natural to ask whether this mapping property extends to $p>2$, that is,  whether we have 
 \nl 
 \noindent
 \CONDP: {\it For each $f\in W^{-1}(L_p(\Omega))$, the solution $u=u_f$ satisfies
 \be
 \label{condp}
|u|_{W^1(L_p(\Omega))}:= ||\nabla u||_{L_p(\Omega)} \leq C_p \| f \|_{W^{-1}(L_p(\Omega))},
\ee
with the constant $C_p$ independent of $f$.}

\begin{remark}[local \CONDP]
\rm As already noted in Remark \ref{r:local}, it is not necessary for the $p$ to be
uniform over $\Omega$. 
In particular, one could decompose $\Omega$ on subdomains on which \CONDP is
valid for different $p$'s.
This is used in \S\ref{s:numerics} for the numerical illustration of the method.
\end{remark}
 
 When $A=I$ (the case of  Laplace's equation),  the validity of \CONDP is
 a well studied problem in Harmonic Analysis.   It is known that  for each
 Lipschitz domain $\Omega$, there is a $P>2$ which depends on $\Omega$ such that
 \CONDP holds for all $2\le p\le P$ (see for example  Jerison and
 Kenig \cite{JK95}).   
In fact, one have in this setting
$P>4$ when $d=2$ and $P>3$ when $d=3$.  
For later use  when $A=I$, 
we denote by $K$ the constant   depending only  on $\Omega$ and $P$
for which
\begin{equation}\label{e:Lp_Id}
|| \nabla u ||_{L_P(\Omega)} \leq K || f ||_{W^{-1}(L_P(\Omega))}.
\end{equation}

For more general $A$, \CONDP  can be shown to hold by using   a perturbation argument given by Meyers
\cite{Me63}(see also Brenner and Scott \cite{BS08}).   We shall describe Meyers' result only in the case $p>2$. We let 
\be
\label{defeta}
 \eta(p) := \frac{1/2 - 1/p}{1/2- 1/P},
 \ee
  and note that $\eta(p)$ increases from the value zero at $p=2$ to
  the value one at $p=P$. 
For any $t\in(0,1)$, we define
\begin{equation}\label{d:p*}
p^*(t):=\arg\max \{ K^{-\eta(p)}>1-t: 2<p<P\}. 
\end{equation}
With these definitions in hand, we have the following result for
general $A$.  Although this result is known \bnew{(see Meyers
  \cite{Me63})}, we provide the 
following simple proof for completeness of this section.

\begin{prop}[\bnew{membership in $W^1(L_p(\Omega)$}]\label{p:meyers}
Assume that $f$ and $\Omega$ are such that for some $P>2$ and some
constant $K$, the
solution $u \in H^1_0(\Omega)$ of problem \eref{wproblem} for 
Laplace's equation satisfies  \eqref{e:Lp_Id}
whenever  $f \in W^{-1}(L_P(\Omega))$.  
 If \eref{eigenbound} is valid for $A$, then 
the solution $u\in H^1_0(\Omega)$ of \eref{wproblem} satisfies
$$
||\nabla u||_{L_p(\Omega)} \leq C \| f \|_{W^{-1}(L_p(\Omega))},
$$
provided  $2\le p<p^*(r/M)$ and $C:= \frac 1 M\frac{K^{\eta(p)}}{1- K^{\eta(p)} (1-\frac{r}{M})}$.
\end{prop}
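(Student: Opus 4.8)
The plan is to turn the weak problem \eref{wproblem} into a fixed‑point equation for $\nabla u$ and to close it with a Neumann series whose contraction factor is controlled exactly by $p^*(r/M)$. Write $A=M(I-B)$ with $B:=I-M^{-1}A$. Since $A(x)$ is symmetric with spectrum contained in $[r,M]$ by \eref{eigenbound}, the symmetric matrix $B(x)$ has spectrum in $[0,1-r/M]$, so $\|B\|_{L_\infty(\Omega)}\le 1-r/M$. Substituting $A=M(I-B)$ into \eref{wproblem} shows that $u$ solves the homogeneous Dirichlet problem for Laplace's equation with right‑hand side $M^{-1}f-\div(B\nabla u)$ in the weak sense; equivalently $\nabla u=\Phi(\nabla u)$, where $\Phi(\bq):=\nabla L_\Delta^{-1}\big(M^{-1}f-\div(B\bq)\big)$ and $L_\Delta^{-1}$ denotes the solution operator of $-\Delta w=g$ with $w\in H^1_0(\Omega)$.

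The heart of the argument is a bound on $g\mapsto \nabla L_\Delta^{-1}g$ as a map into $W^1(L_p(\Omega))$. At $p=2$, testing the Laplace problem against its own solution and using Cauchy--Schwarz gives $\|\nabla L_\Delta^{-1}g\|_{L_2(\Omega)}\le \|g\|_{H^{-1}(\Omega)}$, while at $p=P$ the standing hypothesis \eref{e:Lp_Id} gives $\|\nabla L_\Delta^{-1}g\|_{L_P(\Omega)}\le K\|g\|_{W^{-1}(L_P(\Omega))}$. Because $\Omega$ is bounded, $L_P(\Omega)\subset L_2(\Omega)$, so these are two bounds for one and the same linear operator on the interpolation couple $(L_2,L_P)$, and the Riesz--Thorin theorem yields $\|\nabla L_\Delta^{-1}g\|_{L_p(\Omega)}\le K^{\eta(p)}\|g\|_{W^{-1}(L_p(\Omega))}$ for $2\le p\le P$, with $\eta(p)$ exactly as in \eref{defeta}. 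Combining this with $\|\div(B\bq)\|_{W^{-1}(L_p(\Omega))}\le\|B\|_{L_\infty(\Omega)}\|\bq\|_{L_p(\Omega)}$ (H\"older), the linear part $\bq\mapsto-\nabla L_\Delta^{-1}(\div(B\bq))$ of $\Phi$ has $L_p(\Omega;\R^d)$‑operator norm at most $\theta_p:=K^{\eta(p)}(1-r/M)$, and the definition \eref{d:p*} of $p^*$ is precisely the statement that $\theta_p<1$ whenever $2\le p<p^*(r/M)$.

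For such $p$, $\Phi$ is an affine contraction on $L_p(\Omega;\R^d)$, hence has a unique fixed point $\bq^\ast$ with $\|\bq^\ast\|_{L_p(\Omega)}\le(1-\theta_p)^{-1}\|\nabla L_\Delta^{-1}(M^{-1}f)\|_{L_p(\Omega)}\le(1-\theta_p)^{-1}M^{-1}K^{\eta(p)}\|f\|_{W^{-1}(L_p(\Omega))}=C\|f\|_{W^{-1}(L_p(\Omega))}$ with $C$ exactly as stated. It remains to identify $\bq^\ast$ with $\nabla u$: since $\Omega$ is bounded we have $f\in H^{-1}(\Omega)$, the Lax--Milgram solution $u$ satisfies $\nabla u\in L_2(\Omega;\R^d)$ and $\nabla u=\Phi(\nabla u)$ in $L_2$, and as $\Phi$ is also a contraction on $L_2(\Omega;\R^d)$ (there $\theta_2=1-r/M<1$) uniqueness forces $\bq^\ast=\nabla u\in L_p(\Omega)$ with the asserted estimate; the case $p=2$ reproduces the Lax--Milgram bound, one checking that $C$ collapses to $1/r$. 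The argument is entirely classical; the only step that deserves a word of justification is the interpolation, namely verifying that the $p=2$ and $p=P$ bounds apply to the same operator (with the $L_P$ bound read as a bound on its restriction to $L_P\subset L_2$) so that Riesz--Thorin is applicable verbatim.
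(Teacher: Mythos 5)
Your proof is correct and follows essentially the same route as the paper: you write $A=M(I-B)$ with $\|B\|_{L_\infty(\Omega)}\le 1-r/M$, interpolate the inverse Laplacian between the $p=2$ and $p=P$ bounds to get the norm $K^{\eta(p)}$, and sum the Neumann series with contraction factor $K^{\eta(p)}(1-r/M)$ --- this is precisely the paper's factorization $S=T(I-T^{-1}Q)$ recast as a fixed-point iteration for $\nabla u$, and your constant matches. The one point you rightly flag yourself is the interpolation step: Riesz--Thorin does not apply verbatim to the map $W^{-1}(L_p(\Omega))\to W^1_0(L_p(\Omega))$ since these are not plain $L_p$ spaces; one needs the interpolation identity for the Sobolev scale (the paper invokes the real method, $W^1_0(L_p(\Omega))=[H^1_0(\Omega),W^1_0(L_P(\Omega))]_{\eta(p),p}$), which is the same assertion made, equally tersely, in the paper's proof.
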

\begin{proof}
The main idea of the proof is to write $A$ as a perturbation of the
identity and deduce the $L^p$-bound on $\nabla u$ from the
$L^p$-bound for the solution of the Poisson problem.

The operator $T:= -\Delta$ is invertible from 
$H^{-1}(\Omega)$ to $H^1_0(\Omega)$,
and its inverse $T^{-1}$ is bounded with norm one.  From  \eref{e:Lp_Id}, it is 
also bounded with norm $K$ as a mapping from $W^{-1}(L_P(\Omega))$ to $W^1_0(L_P(\Omega))$, where we define the  norm on  $W^1_0(L_P(\Omega)) $ by its  semi-norm.
For  the real method of interpolation, we have for $2<p<P$, $W_0^1(L_p(\Omega))  =
\lbrack  H^1_0(\Omega),W^1_0(L_P(\Omega)) 
\rbrack_{\eta(p), p}$,
where $\eta(p)$ is defined in \eref{defeta}.
It follows  by interpolation that $T^{-1}$ is a bounded mapping from $W^{-1}(L_p(\Omega))$ to $W_0^1(L_p(\Omega))$ and  
$$
||\nabla T^{-1}f ||_{L^p(\Omega)} \leq K^{\eta(p)}||f||_{W^{-1}(L_p(\Omega))}.
$$

Let $S:W_0^{1}(L_p(\Omega))\rightarrow W^{-1}(L_p(\Omega))$ denote the operator satisfying
$
  Sv := -\div \big(\frac 1 M A\nabla v \big).
$
For convenience, we also define   the perturbation operator $Q:=T-S$.
Then,
$S$ and $Q$ are bounded operators  from $W_0^{1}(L_p(\Omega))$ to $W^{-1}(L_p(\Omega))$ with norms
$$
\| S \| \leq 1\qquad \text{and} \qquad \| Q \| \leq 1-\frac{r}{M}.
$$
It follows that as a mapping from $W_0^{1}(L_p(\Omega))$ to $W_0^{1}(L_p(\Omega))$
$$
\| T^{-1} Q \| \leq \| T^{-1} \| \| Q \| \leq K^{\eta(p)} (1-\frac{r}{M}) .
$$
Hence, 
$S=T(I-T^{-1}Q)$ is  invertible provided $K^{\eta(p)} (1-\frac{r}{M})<1$, that is, provided $2\le p<p^*(r/M)$.
Moreover, as a mapping from $W^{-1}(L_p(\Omega))$ to $W_0^{1}(L_p(\Omega))$
$$
\|S^{-1}\|\le \frac{\|T^{-1}\|}{1- K^{\eta(p)} (1-\frac{r}{M})}\le  \frac{K^{\eta(p)}}{1- K^{\eta(p)} (1-\frac{r}{M})},
$$
which yields the desired bound.
\end{proof}

\section{Adaptive Finite Element Methods}\label{s:AFEM}
There is by now a  considerable literature which constructs and analyzes AFEMs.   Our  new algorithm differs from those existing in the literature
in the assumptions we make on the diffusion matrix $A$.  Typically, it is assumed that each entry in this  matrix is a piecewise polynomial on the initial partition $\cT_0$
or at a minimum that it is piecewise smooth on the partition $\cT_0$. 
\bnew{{\it Our algorithm does not require the assumption that the
discontinuities of $A$ are compatible with $\cT_0$ or even known to us a
priori, except for the knowledge of the Lebesgue exponent $p$ of 
$\|\nabla u\|_{L_p(\Omega)}$ or equivalently $q=2p/(p-2)$. 
However, the universal choice $q=2$
is valid for the practically significant case of piecewise constant
$A$ over subdomains separated by a
Lipschitz manifold of co-dimension one;  \bnew{see \S \ref{s:numerics}}}}.
Our algorithms use subroutines that appear in the standard AFEMs
and can be seen as an extension of \cite{Ste7,CDN} where the approximation of
$f$ is discussed. 
Therefore, we shall review the existing algorithms in this section.  We refer the reader to Nochetto et al. \cite{NSV:09} for an
up to date survey of the current theory of AFEMs for elliptic problems.   Unless noted otherwise, the proofs of all the results quoted here can be found in  \cite{NSV:09}.

\subsection{Partitions and Finite Element Spaces}\label{SS:partitions}

Underlying any AFEM is a method for adaptively  partitioning the domain into
polyhedral cells.     Since there are, by now, several papers which give a
complete presentation of refinement rules used in AFEMs, for example
\cite{BN:10}, we assume the reader is familiar with these methods of
partitioning.  
In the discussion that follows, we will  consider 
the two dimensional case  (triangles) and the method of newest
vertex bisection,  but the results we present hold  for   $d\ge 2$ and
more general refinement rules  satisfying Conditions 3, 4 and 6 in
\cite{BN:10}. In particular, they hold for successive bisections, 
quad-refinement, and red-refinement all with {\it hanging nodes.} 
It is simply for notational convenience that we limit our discussion
 to newest vertex bisection.

The starting point for newest vertex partitioning is to  assume that $\Omega$ is a polygonal domain and $\cT_0$ is an
  initial partition  of $\Omega$ into a finite number of triangles
 each with a newest vertex label. It
  is assumed that the initial labeling of vertices of
  $\cT_0$ is {\it compatible}; see \cite{04BDD,Ste06}.  
If a cell is to be refined, it is divided into two cells by bisecting 
 the edge opposite to the newest vertex and labeling the newly created vertex
for the two children cells.
This bisection rule gives a unique refinement procedure and
an ensuing forest $\fT$ emanating from the root $\cT_0$.
  
 We say a partition $\cT\in\fT$ is {\it admissible} if it can be obtained from
  $\cT_0$ by a finite number of newest vertex bisections.  The complexity of
  $\cT$ can be measured by the number $n(\cT)$ of bisections that need to be
  performed to obtain $\cT$ from   $\cT_0$: in fact, $\#\cT=\#\cT_0+n(\cT)$.
  We denote by  $\fT_n$, $n\ge 1$,    the set of all   partitions $\cT$ that can
be obtained from $\cT_0$ by $n$ newest vertex bisections.

  A general triangulation $\cT\in\fT_n$ may be non-conforming, i.e.,  contain hanging nodes. If
  $\cT$ is non-conforming, then it is known \cite{04BDD,Ste06,BN:10} that it can be refined to a conforming
  partition $\overline{\cT}$ by applying a number of newest vertex
  bisections  controlled by $n(\cT)$, namely,
  \be
  \label{nv1}
   \#\overline{\cT}-\#\cT_0\le C_0n(\cT),
   \ee
    with $C_0$ an absolute constant depending only on the initial partition
    $\cT_0$ and its labeling. We denote by 
    $$\CONF(\cT)$$
     the smallest conforming admissible partition which contains $\cT$.
      
Given a conforming partition $\cT\in\fT_n$ and a polynomial
degree $m_u\ge1$, we define $\V(\cT)$ to be
the finite element space of continuous piecewise polynomials of degree at
most $m_u$ subordinate to $\cT$. Given a positive definite diffusion
matrix $A \in L_\infty(\Omega)$, and a right side $f\in L_2(\Omega)$,   
the Galerkin approximation $U:=U(\cT,A,f):=\GAL(\cT,A,f)$
of \eqref{wproblem} is by definition the unique solution of the discrete problem
\be
\label{gal}
U\in \mathbb V(\cT): \qquad
\int_\Omega (A\nabla U)\cdot \nabla V = \int_\Omega f \ V, \qquad \forall V\in
\mathbb V(\cT).
\ee
  
Notice that given $\cT$, the function $U$ is the best
  approximation to $u$ from $\V(\cT)$ in the energy norm induced by $A$ which is in turn equivalent to the $H_0^1(\Omega)$ norm.

\subsection {The structure of AFEM}\label{SS:AFEM}
 Standard AFEMs for approximating $u$ generate a sequence of nested 
 admissible, conforming partitions 
$\{\cT_k\}_{k\geq 0}$ of $\Omega$ starting from $\cT_0$.    The partition $\cT_{k+1}$ is obtained from    
$\cT_k$, $k\ge 0$,   by  using  an 
adaptive strategy.  
\bnew{Given any partition $\cT$ and finite element function $V\in
  \mathbb V(\cT)$, the \bnew{residual estimator} is defined as
\begin{equation*}
\begin{aligned}
\eta_\cT(V,A,f;\cT)&:=\left(\sum_{T\in \cT} \eta_\cT(V,A,f;T)^2\right)^{1/2}, \\
\eta_\cT(V,A,f;T)&:=\text{diam}(T)\| f+\text{div}(A\nabla V)\|_{L_2(T)} + \left(\sum_{F \in \Sigma(T)} \diam(F) \| \lbrack A\nabla V\rbrack \|_{L_2(F)}^2\right)^{1/2},
\end{aligned}
\end{equation*}
where $\Sigma(T)$ is the set of edges (d=2) or faces (d=3) constituting the boundary of $T$ and $\lbrack \cdot \rbrack$ denotes the normal jump across $F$.
The accuracy of the Galerkin solution $U_k =\GAL(\cT_k,A,f) \in
\mathbb V(\cT_k)$ is asserted by examining $\eta_{\cT_k}(U_k,A,f;T)$
and \bnew{{\it marking}
certain cells in $\cT_k$ for refinement} via a D\"orfler marking \cite{Dor96}.
After performing these refinements (and possibly additional
refinements to remove hanging nodes), we obtain a new
\bnew{conforming} partition. 
This process is repeated until the \bnew{residual estimator} is below a prescribed tolerance $\veps_k$.
The corresponding subdivision is declared to be $\cT_{k+1}$ and its associated Galerkin solution \bnew{$U_{k+1}\in\V(\cT_{k+1})$.}
In the case where $A$ and $f$ are piecewise polynomials subordinate to
$\cT_{k+1}$, we recall that the \bnew{residual estimator} is
equivalent to the energy error, i.e. there exists constants $C_L \leq
C_U$ only depending on the shape regularity of \bnew{the forest $\fT$} and on the
eigenvalues of $A$ such that
\begin{equation}\label{e:equiv}
C_L \eta_{\cT_{k+1}}(U_{k+1},A,f;\cT_{k+1}) \leq \| u - U_{k+1} \|_{H^1_0(\Omega)} \leq C_U \eta_{\cT_{k+1}}(U_{k+1},A,f;\cT_{k+1}). 
\end{equation}
\bnew{Instrumental to our arguments} is the absence of so-called
oscillation terms \cite{MNS:00,08CKNS,NSV:09} in the above relation, which
follows from considering piecewise polynomial $A$ and $f$; 
\bnew{we refer to \cite{NSV:09,Ste7}.}
 
 We denote this procedure by \MAIN and formally write %
$$
[\cT_{k+1},U_{k+1}] = \MAIN  (\cT_k,A,f,\veps_k),
\qquad  \eta_{\cT_{k+1}}(U_{k+1},\bnew{A,f,}\cT_{k+1})  \le \veps_{k}.
$$
In other words the input to \MAIN is the partition $\cT_k$, the matrix $A$, the right side $f$ and the target error $\veps_k$.
The output is the partition $\cT_{k+1}$ and the new Galerkin solution $U_{k+1}$ which satisfies the error bound
\begin{equation}\label{upper-eps}
\| u -U_{k+1} \|_{H^1_0(\Omega)} \leq C_U \veps_k.
\end{equation}
}
\bnew{Each loop within \MAIN is a contraction for the energy error
  with a constant $\alpha<1$ depending on $C_L,C_U$ and the marking parameter
\cite{NSV:09}. Therefore, if $\hat\veps_k :=
\eta_{\cT_k}(U_k,A,f;\cT_k)$ is the level of error before the call to \MAIN, 
then the number of iterations $i_k$ within \MAIN to reduce such an error to
$\veps_k$ is bounded by 
\bnew{
\begin{equation}\label{inner-iter}
i_k \le \frac{\log\left(\frac{C_U M^{1/2}}{C_L r^{1/2}}\right) + 
\log \left(\frac{\hat\veps_k}{\veps_k}\right)}{\log\alpha^{-1}} + 1.
\end{equation}
}}

This idealized algorithm does not carefully handle the error
incurred in the formulation and solution of \eqref{gal}, namely in the
procedure $\GAL(\cT,A,f)$ \cite{NSV:09}. 
This step requires the computation of integrals that are products of $f$
or $A$ with functions from the finite element space.  In performance analysis of such algorithms, it is 
typically assumed that these integrals are  computed exactly, while in fact they are computed by quadrature rules.  The effect of quadrature is
not  assessed in a pure a posteriori context. 
One
alternative, advocated in \cite{04BDD,Ste7} for the Laplace operator, 
is to approximate $f$ by a suitable piecewise polynomial $f_k$ over $\cT_k$. Of
course, one still needs to understand in what sense $f$ and $A$ are
given to us, a critical  issue not addressed here.

Our AFEM differs from $\MAIN$ in that we use approximations to both
$f$ and $A$, the latter being crucial to the method.  
\bnew{Given a current partition $\cT_k$ and a target
tolerance $\veps_k$,} the AFEM will
first find an admissible conforming partition $\cT_k'$, 
which is a refinement of $\cT_k$, 
on which we can approximate $f$  by a piecewise polynomial $f_k$ and
likewise $A$ by a piecewise polynomial $A_k$ such that
\be
\label{firststep}
 \|f-f_k\|_{H^{-1}(\Omega)}\le \veps_k',\quad 
\|A-A_k\|_{L_q(\Omega)}\le \veps_k',
\ee
with $q=2p/(p-2)\in[2,\infty]$ (the existing algorithms in the literature always take $q=\infty$
   \cite{NSV:09}).
The tolerance $\veps_k'$ is chosen as a multiple of $\veps_k$, for
example $\veps_k'=\omega\veps_k$ with $\omega >0$ yet to be determined.  
We next apply $[\cT_{k+1},U_{k+1}] = \MAIN (\cT_k',A_k,f_k,\veps_k/2)$
to find the new admissible conforming partition $\cT_{k+1}$, which is
a refinement of $\cT_k'$, and so of $\cT_k$, and Galerkin solution
\bnew{$U_{k+1}\in\V(\cT_{k+1})$} satisfying
$$
\|u_k-U_{k+1}\|_{H_0^1(\Omega)}\le 
\bnew{C_U \eta_{\cT_{k+1}}(U_{k+1},A_k,f_k;\cT_{k+1}) \le
\frac{C_U}{2}\veps_k,}
$$
where $u_k$ is the  solution to \eref{wproblem} with diffusion matrix
$A_k$ and right side $f_k$. From the perturbation estimate  \eref{PT1}, with
$\hat r>0$ a bound for the minimum eigenvalue of $A_k$, we obtain
\begin{equation*}
\begin{aligned}
\|u-U_{k+1}\|_{H_0^1(\Omega)}&\le
\|u-u_k\|_{H_0^1(\Omega)}+\|u_k-U_{k+1}\|_{H_0^1(\Omega)} 
\\
&\le \hat r^{-1}\|f-f_k\|_{H^{-1}(\Omega)}+
\hat r^{-1} \|\nabla
u\|_{L_p(\Omega)}\|A-A_k\|_{L_q(\Omega)}+ \bnew{\frac{C_U}{2}\veps_k.}
\end{aligned}
\end{equation*}
Therefore, invoking \eqref{condp} and choosing
$\veps_k'$ (or $\omega$) sufficiently small, we get the desired bound 
\begin{equation}\label{secondstep}
\|u-U_{k+1}\|_{H_0^1(\Omega)} \le
 \hat r^{-1} \big(1+C_p\|f\|_{W^{-1}(L_p\Omega))} \big)\veps_k' +
 \bnew{\frac{C_U}{2} \veps_k
\le C_U \veps_k.}
\end{equation}

We see that such an AFEM has three basic subroutines.  At
  iteration $k$, the first one is an
algorithm \APPROX which provides the approximation $f_k$ to $f$,  
the second one is an algorithm \COEFF which provides the
approximation $A_k$ to $A$, and the third one is \MAIN which does the marking
and further refinement $\cT_{k+1}$ to drive down the error of the Galerkin
approximation $U_{k+1}$ to $u$.  We discuss each of these in somewhat  more
detail now.

 We  denote by \APPROX the algorithm which generates the approximation to $f$.  It takes as input a function
 $f\in H^{-1}(\Omega)$, 
 a conforming partition $\cT$,  and  a tolerance $\veps$.  The
 algorithm then outputs 
$$[\hat \cT,\hat f] = {\APPROX}(f,\cT,\veps)$$
where
 $\hat \cT$ is a conforming partition which is a refinement of $\cT$ and $\hat
 f$ is a piecewise polynomial of degree at most $m_f$ subordinate to $\cT$ such that 
\be
\label{approx}
\|f-\hat f\|_{H^{-1}(\Omega)}\le \veps.
\ee
Notice that we do not assume any  regularity for $\hat f$.
In theory, one could construct such $\APPROX$, but in practice one
needs more information on $f$ to realize such algorithm as we now discuss.

By far, the majority of AFEMs assume that $f\in L_2(\Omega)$ but recent work
\cite{Ste7,CDN} treats the case of  certain more general right sides $f\in
H^{-1}(\Omega)$.  If $f\in L_2(\Omega)$, then one can bound the
error in approximating $f$ by piecewise polynomials of degree at most $m_f$ by
\be
\label{osc}
\|f-\hat f\|_{H^{-1}(\Omega)} \le \osc (f,\cT):=\(\sum_{T\in \cT}h_T^2 \|f-a_T(f)\|_{L^2(T)}^2\)^{\frac 1 2},
\ee
where $a_T(f)$ is the $L^2(T)$
orthogonal projection 
of $f$ onto \bnew{$\P_{m_f}(T)$}, the space of polynomials of total degree  
\bnew{$\le m_f$} over $T$, and $\hat f|_T := a_T(f)$ for $T \in \cT$.  
 The right side of \eref{osc} is called the oscillation of $f$ on
 $\cT$ \bnew{\cite{MNS:00,NSV:09}}.  
For any concrete  realization of {\APPROX}, one needs a model
for what information is available about $f$.
We refer to \cite{CDN} for further discussions in this direction.

Similarly, one needs to approximate $A$ in the AFEM. 
 Given a positive definite and bounded diffusion
matrix $A$, a conforming partition $\cT$ and a tolerance $\veps$,
the procedure
$$[\hat\cT,\hat A] = \COEFF (A,\cT,\veps)$$
outputs a conforming partition $\hat \cT$, which is a refinement of
$\cT$, and  a diffusion matrix $\hat A$, which has piecewise polynomial components of degree at most $m_A$ subordinate to $\hat \cT$ and satisfies 
   \be
   \label{COEFF}
   \|A-\hat A\|_{L_q(\Omega)}\le \veps,
   \ee
for $q=2p/(p-2)\in[2,\infty]$.
   In addition, in order  to guarantee the positive definiteness of $\hat A$, we
   require  that there is a known constant $C_2$ for which we have
   \be
   \label{COEFF1}
  C_2^{-1} \gamma_{\min}(A)\le \gamma_{\min}(\hat A)\le \gamma_{\max}(\hat A)\le C_2\gamma_{\max}(A).
  \ee
In \S \ref{SapproxA} we  discuss   constructions of $\COEFF$ (and
briefly mention constructions for  $\APPROX$) that have the
above properties and in addition are optimal in the sense of \S\ref{SS:Performance}.

If $A$ is a piecewise polynomial matrix of degree $\le m_A$
on the initial partition $\cT_0$,
then one would have an exact representation of $A$ as a polynomial on each cell $T$ of any partition $\cT$ and there is no need to approximate $A$.  For more general
$A$, the standard approach is to  approximate $A$  in the $L_\infty(\Omega)$ norm by piecewise
polynomials.   This requires that   $A$ is piecewise smooth on the initial
partition $\cT_0$ in order to guarantee that this $L_\infty$ error can be made
arbitrarily small; thus $q=\infty$.   The new perturbation theory we have given allows one to  circumvent this restrictive assumption on $A$ required by
standard AFEM. Namely, it is enough to  assume that \CONDP holds for some $p>2$ since then
$q<\infty$.

\subsection{Measuring the performance of AFEM}\label{SS:Performance}
The ultimate goal of an AFEM is to produce a {\it quasi-best} approximation $U$ to $u$ 
with error measured in $\|\cdot\|_{H_0^1(\Omega)}$. The performance of the AFEM
is measured by  the size of $\|u-U\|_{H_0^1(\Omega)}$ relative            
to the size of the partition  $\cT$. The size of $\cT$ usually
reflects the total computational cost of implementing the algorithm.
As a benchmark, it is useful to compare the performance of the AFEM
with  the best approximation of $u,f$ and $A$ 
provided we have full knowledge of them. 


\medskip\noindent
{\bf Approximating $u$.}
For each $n\ge 1$, we define 
\bnew{$\Sigma_n^{\bnew{m_u}}$}  to be the union of all the \bnew{finite element
  spaces} $\V(\cT)\subset H^1_0(\Omega)$
with $\cT\in\fT_n$ (the set of non-conforming
partitions obtained from $\cT_0$ by at most $n$ refinements). 
Notice that \bnew{$\Sigma_n^{\bnew{m_u}}$} is a nonlinear class of functions.  Given any function
   $v\in H_0^1(\Omega)$, we denote by 
   $$
   \bnew{\sigma_n(v)_{H_0^1(\Omega)}:=\sigma_n^{m_u}(v)_{H_0^1(\Omega)}}:=\inf_{\bnew{V\in\Sigma_n^\bnew{m_u}}}
   \|v-V\|_{H_0^1(\Omega)},\quad n\ge 1,
   $$
    the error of the best approximation of $v$ by elements of \bnew{$\Sigma_n^\bnew{m_u}$}.
   Using $\sigma_n$, we can stratify the space $H_0^1(\Omega)$ into 
   approximation classes: for any $s>0$,
   we define $\cA^s:=\cA^s(\cT_0,H_0^1(\Omega))$ as the set of all $v\in H_0^1(\Omega)$ for which
   \be
   \label{As}
  |v|_{\cA^s} := \sup_{n\ge 1} \Big( n^s\sigma_n(v)_{H_0^1(\Omega)}\Big)
  < \infty;
  \ee
    the quantity $|v|_{\cA^s}$ is a quasi-semi-norm.
   Notice that as $s$ increases the cost of membership to be in
   $\cA^s$ increases.  For example, we have $\cA^{s_1}\subset\cA^{s_2}$ whenever
   $s_2\le s_1$.   One can only expect \eqref{As} for a certain range
   of $s$, namely $0<s\le S$, where $S=m_u/d$ is the natural bound on 
   the order of approximation imposed by the polynomial degree $m_u$ being used.

  Since the output of AFEMs are conforming partitions, it is important to understand whether the imposition that the partitions are conforming
  has any serious effect on the approximation classes.  In view of \eref{nv1}, we have  that for any $v\in\cA^s$, there exist $S_n\in\Sigma_n^\bnew{m_u}$ subordinate to a conforming partition for which
  \be
  \label{confapprox}
  \|v-S_n\|_{H_0^1(\Omega)}\le C_0^s|v|_{\cA^s}n^{-s}, \quad n\ge 1,
  \ee
  with  $C_0$ the constant in \eref{nv1}.  Thus, if we had defined the approximation classes  $\cA^s$ with the additional requirement that the underlying partitions are conforming, then we would get the same approximation class and an equivalent  quasi-norm.

  As mentioned above, the input to routine \MAIN  includes polynomial approximations
  $\hat f$ and $\hat A$ to  $f$ and $A$ and then the  algorithm produces an approximation to  the solution $\hat
  u$ of \eqref{wproblem} with diffusion coefficient $\hat A$ and
  right hand side $\hat f$.
   However, $u \in \cA^s$ does not guarantee that $\hat u \in
    \cA^s$, which motivates us to introduce the following definition.

\begin{definition}[$\veps-$approximation of order $s$]
  Given $u\in \cA^s$ and $\veps>0$, a function $v$ is said to be an
  \emph{$\veps-$approximation of order $s$ to $u$} if 
  $\|u-v\|_{H^1_0(\Omega)}\le\veps$ and there
  exists a constant $C$ independent of $\veps$, $u$, and $v$, such that for 
  all $\delta\geq \veps$ there exists $n\in \mathbb N$ with
  \begin{equation}\label{only}
  \sigma_n(v)_{H_0^1(\Omega)} \le \delta,\quad n\le  C |u|_{\cA^s}^{1/s}\delta^{-1/s}.
  \end{equation}
\end{definition}

\noindent
We remark that if $\veps_1<\veps_2$ and $v$ is an
  $\veps_1-$approximation of order $s$ to $u$, then $v$ is an $\veps_2-$approximation
  of order $s$ to $u$ as well.
  We now provide a lemma characterizing such functions.
\begin{lemma}[$\veps-$approximations of order $s$]\label{l:approx}
Let $u \in \cA^s(\cT_0,H^1_0(\Omega))$ and $v \in H^1_0(\Omega)$
satisfy $\| u - v\|_{H^1_0(\Omega)}\leq \veps$ for some $\veps>0$.
Then $v$ is a $2\veps$-approximation of order $s$ to $u$.
\end{lemma}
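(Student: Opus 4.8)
The plan is to verify directly that $v$ satisfies the definition of a $2\veps$-approximation of order $s$ to $u$. The first requirement, $\|u-v\|_{H^1_0(\Omega)}\le 2\veps$, is immediate since by hypothesis $\|u-v\|_{H^1_0(\Omega)}\le\veps\le 2\veps$. The real content is the second requirement: for every $\delta\ge 2\veps$ we must produce an integer $n$ with $\sigma_n(v)_{H^1_0(\Omega)}\le\delta$ and $n\le C|u|_{\cA^s}^{1/s}\delta^{-1/s}$, with $C$ absolute.

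The key idea is that $\sigma_n(v)$ can be controlled by $\sigma_n(u)$ plus the fixed error $\|u-v\|_{H^1_0(\Omega)}\le\veps$, by the triangle inequality: if $V\in\Sigma_n^{m_u}$ is near-best for $u$, then $\|v-V\|_{H^1_0(\Omega)}\le\|u-V\|_{H^1_0(\Omega)}+\|u-v\|_{H^1_0(\Omega)}\le\sigma_n(u)_{H^1_0(\Omega)}+\veps$ (taking an infimizing sequence for $\sigma_n(u)$), so $\sigma_n(v)_{H^1_0(\Omega)}\le\sigma_n(u)_{H^1_0(\Omega)}+\veps$. Now fix $\delta\ge 2\veps$. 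Since $u\in\cA^s$, we have $\sigma_n(u)_{H^1_0(\Omega)}\le|u|_{\cA^s}n^{-s}$ for all $n\ge 1$; choosing $n$ to be the smallest integer with $|u|_{\cA^s}n^{-s}\le\delta/2$, i.e. $n=\lceil(2|u|_{\cA^s}/\delta)^{1/s}\rceil$, gives $\sigma_n(u)_{H^1_0(\Omega)}\le\delta/2$, hence $\sigma_n(v)_{H^1_0(\Omega)}\le\delta/2+\veps\le\delta/2+\delta/2=\delta$, using $\veps\le\delta/2$. It remains to bound $n$: since $\lceil x\rceil\le x+1\le 2x$ for $x\ge 1$ (and $(2|u|_{\cA^s}/\delta)^{1/s}\ge 1$ because $\delta/2\le\delta\le\ldots$ — one should check this, using that $\sigma_1(u)\le|u|_{\cA^s}$ forces $\delta\ge 2\veps\ge\ldots$; if instead $(2|u|_{\cA^s}/\delta)^{1/s}<1$ one simply takes $n=1$ and $\sigma_1(v)\le\sigma_1(u)+\veps\le|u|_{\cA^s}+\veps$, which one checks is $\le\delta$), we get $n\le 2(2|u|_{\cA^s}/\delta)^{1/s}=2^{1+1/s}|u|_{\cA^s}^{1/s}\delta^{-1/s}$, so $C=2^{1+1/s}\le 4$ works uniformly in $s\ge$ any fixed lower bound, and in any case $C$ is independent of $\veps,u,v$.

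The main obstacle — really the only subtlety — is the edge case handling: one must be careful that the chosen $n$ is a genuine positive integer and that the ceiling-function estimate $\lceil x\rceil\le 2x$ is only valid for $x\ge 1$, so the argument splits into the case $(2|u|_{\cA^s}/\delta)^{1/s}\ge 1$ (the generic case, handled above) and the case $(2|u|_{\cA^s}/\delta)^{1/s}<1$, where one falls back to $n=1$ and must check $\sigma_1(v)_{H^1_0(\Omega)}\le\delta$ directly from $\delta>2|u|_{\cA^s}$ and $\veps\le\delta/2$. Both cases yield the bound with the same absolute constant. Everything else is the triangle inequality and the definition of $|u|_{\cA^s}$.
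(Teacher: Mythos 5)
Your proof is correct and follows essentially the same route as the paper: the triangle inequality $\sigma_n(v)_{H^1_0(\Omega)}\le \|u-v\|_{H^1_0(\Omega)}+\sigma_n(u)_{H^1_0(\Omega)}$, the choice of $n$ from the definition of $|u|_{\cA^s}$ so that $\sigma_n(u)_{H^1_0(\Omega)}\le\delta/2$, and the bound $\veps\le\delta/2$. The paper simply states the existence of $n\le |u|_{\cA^s}^{1/s}(\delta/2)^{-1/s}$ and obtains $C=2^{1/s}$, whereas you spell out the integer-rounding details; this only changes the absolute constant and not the argument.
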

\begin{proof}
Let $\delta\geq 2\veps$.
It suffices to invoke a triangle inequality to realize that 
\be
\label{mt3}
\sigma_n(v)_{H_0^1(\Omega)}\le \|u-v\|_{H_0^1(\Omega))}+\sigma_n(u)_{H_0^1(\Omega)}\le   \delta/2 +\sigma_n(u)_{H_0^1(\Omega)}.
\ee
 Since $u\in \cA^s(\cT_0,H^1_0(\Omega))$ we deduce that there exists
$n\leq |u|_{\cA^s}^{1/s} (\delta/2)^{-1/s}$ such that
$\sigma_n(u)_{H_0^1(\Omega)}\leq \delta/2$.
Estimate \eqref{only} thus follows with $C=2^{1/s}$.
\end{proof}

 In view of this discussion, we make the assumption that the call
$[\hat\cT,\hat U] = \MAIN(\cT,\hat A,\hat f,\veps)$ deals with approximate
data $\hat A$ and $\hat f$ exactly and creates no further errors. 
We say that $\MAIN$ is
of {\it class optimal performance in $\cA^s$} if there is an absolute
constant $C_3$ such that the number of
elements $N(\hat u)$ marked for refinement on $\cT$ to achieve the
error $\|\hat u -\hat U\|_{H^1_0(\Omega)}\le\veps$ satisfies
\be
\label{optmain}
N(\hat u) \le C_3 |\hat u|^{1/s}_{\cA^s} \veps^{-1/s},
\ee
whenever the solution $\hat u$  to  \eqref{wproblem} with data $\hat A$ and $\hat f$ is in $\cA^s(\cT_0,H^1_0(\Omega))$.
  This is a slight abuse of terminology  because this algorithm 
  is just {\it near class optimal} due to the presence of $C_3$.
  We drop the word 'near' in what follows for this and other
  algorithms.
  Moreover, notice that we distinguish between the elements selected 
  for refinement by the algorithm and those chosen to ensure
  conforming meshes. 
  Estimate \eqref{optmain} only concerns the former since the latter
  may not satisfy
  \eqref{optmain} in general; see for instance \cite{04BDD,NSV:09}.

\medskip\noindent
 {\bf Approximating $f$.}  We can measure the performance of 
 the approximation of $f$ in a similar way.  We let \bnew{$\Sigma_n^{m_f}$}
 be the space of all piecewise polynomials $S$ of degree at most $ m_f \geq 0$
 subordinate to a partition $\cT\in\fT_n$, and then define
  $$
   \sigma_n(f)_{H^{-1}(\Omega)}:=\sigma^{m_f}_n(f)_{H^{-1}(\Omega)}:=\inf_{S\in\Sigma_n^{m_f}}\|f-S\|_{H^{-1}(\Omega)},\quad n\ge 1.
  $$   %
    In analogy with the class $\cA^s$, we let the class 
   $\cB^s:=\cB^s(\cT_0,H^{-1}(\Omega))$, $s\ge 0$, consist of all functions $f\in H^{-1}(\Omega)$ for which 
   \be
   |f|_{\cB^s}:= \sup_{n\ge 1} \Big( n^s\sigma_n(f)_{H^{-1}(\Omega)}
   \Big) < \infty.
  \ee

 We will also need to consider the approximation of functions in other norms.  If $0< q\le \infty$ and $g\in L_q(\Omega)$ ($g\in C(\overline{\Omega})$ in the case $q=\infty$), we define
  $$
   \sigma_n(f)_{L_q(\Omega)}:=\sigma^{m_f}_n(f)_{L_q(\Omega)}:=\inf_{S\in\Sigma_n^{m_f}}\|f-S\|_{L_q(\Omega)},\quad n\ge 1,
  $$   %
    and the corresponding approximation classes 
   $\cB^s(L_q(\Omega)):=\cB^s(\cT_0,L_q(\Omega))$, $s\ge 0$, 
consisting  of all functions $L_q(\Omega)$ for which 
   \be
   |f|_{\cB^s(L_q(\Omega))}:= \sup_{n\ge 1} \Big( n^s\sigma_n(f)_{L_q(\Omega)}
   \Big) < \infty.
  \ee

   Let us now see what performance we can expect of the algorithm
   \APPROX.  If $f\in\cB^s(\cT_0,H^{-1}(\Omega))$, then there are 
   partitions $\cT^*$ with $\#\cT^*-\#\cT_0\le |f|^{1/s}_{\cB^s}\veps^{-1/s}$ on which we can find a piecewise polynomial $S$ such that $\|f-S\|_{H^{-1}(\Omega)}\le\veps$.
   Given any $\cT$ obtained as a refinement of $\cT_0$, the overlay $\cT^*\oplus
   \cT$ of $\cT^*$ with $\cT$  has cardinality obeying \cite{NSV:09}
\begin{equation}\label{e:overlay}
 \# (\cT^*\oplus \cT) - \#\cT \le \#\cT^*-\#\cT_0 
\leq |f|^{1/s}_{\cB^s}\veps^{-1/s}.
\end{equation}
Notice that at this stage the partitions might not be conforming.
This motivates us to say that  the algorithm \APPROX has {\it class optimal
performance} on $\cB^s$ if the number  $N(f)$ 
of elements chosen to be refined by the
algorithm to achieve a tolerance $\veps$ starting from $\cT$,  always satisfies
   \be
   \label{rhsapprox}
    N(f) \le C_3 |f|^{1/s}_{\cB^s}\veps^{-1/s},
   \ee
  with $C_3$ an absolute constant.   
  We refer to \S\ref{SapproxA} for the construction of such algorithms.

\medskip\noindent
 {\bf Approximating $A$.}
With slight abuse of notation, we denote again by \bnew{$\Sigma_n^{m_A}$} the class
of piecewise polynomial matrices of degree $\leq m_A$ subordinate to a 
partition $\cT\in\fT_n$. The best approximation error of $A$ within
$\Sigma_n^{m_A}$ is given by
$$
\sigma_n(A)_{L_q(\Omega)} := \sigma_n^{m_A}(A)_{L_q(\Omega)}
:= \inf_{S\in\Sigma_n^{m_A}} \|A-S\|_{L_q(\Omega)}.
$$
We denote by $\cM^s:=\cM^s(\cT_0,L_q(\Omega))$ the class of all
matrices such that
   \be
   \label{Ms}
   |A|_{\cM^s}:= \sup_{n\ge 1} \Big(n^s\sigma_n(A)_{L_q(\Omega)}\Big)
   < \infty.
  \ee

   This accounts for $L_q$ approximability. But
   in our application of the algorithm \COEFF, we need that the matrix $\hat
   A$ is also positive definite to make use of the perturbation
   estimate \eqref{PT1}.  We show later in \S\ref{SS:positive} that if
   we know the bounds \eref{eigenbound} for the eigenvalues of $A$,
   then there is a constant $C_4$ and a piecewise polynomial
   matrix $\hat A$ of degree $\le m_A$ such that
   \be
   \label{prophatA}
   \|A-\hat A\|_{L_q(\Omega)}\le C_4 \sigma_n(A)_{L_q(\Omega)}
   \ee
    where the eigenvalues of $\hat A$ satisfy \eref{eigenbound}
   for some $\hat r$ and $\hat M$ comparable to $r$ and $M$
   respectively;
   \bnew{the constant $C_4$ is independent of $r,M$ and $n$. This
     issue arises of course for $m_A\ge1$ since the best piecewise constant
     approximation of $A$ in $L_q(\Omega)$ preserves both bounds $r$ and $M$.}
            
   In analogy to  \MAIN and \APPROX, 
   we say that  the algorithm \COEFF  has {\it
     class optimal performance} on $\cM^s$ if the number 
   $N(A)$ of elements marked for
   refinement to achieve the tolerance $\veps$ starting from a partition $\cT$ always satisfies
   \be
   \label{coapprox}
   N(A) \le  C_3 |A|^{1/s}_{\cM^s}\veps^{-1/s},
   \ee
  with $C_3$ an absolute constant.  
  Again, we refer to \S\ref{SapproxA} for the construction of such algorithms.  
   
   As with the approximation class $\cA^s$ earlier,
    we get exactly the same approximation classes 
    $\cB^s=\cB^s(\cT_0,H^{-1}(\Omega))$ and
    $\cM^s=\cM^s(\cT_0,L_q(\Omega))$ if we require in addition that the
    partitions are conforming.

\medskip\noindent
{\bf Performance of AFEM.}
Given the approximation classes $\cA^s,\cB^s,\cM^s$,
a goal for performance of an AFEM would be that  
whenever  $u\in\cA^s,f\in\cB^s,A\in\cM^s$, the AFEM produces a
  sequence $\{\cT_k\}_{k\ge 0}$ of nested  triangulations
  ($\cT_{k+1}\ge\cT_k$ for each $k\ge 0$)  such that for $k\ge1$
       \be
       \label{moremodest}
       \|u-U_k\|_{H_0^1 (\Omega)}\le C 
       \big(|u|_{\cA^s} + |f|_{\cB^s} + |A|_{\cM^s} \big)
       (\#\cT_k-\#\cT_0)^{-s},
       \ee
with $C$  a constant depending only on $s$.
The bound \eqref{moremodest} is in the spirit of
Binev et al \cite{04BDD}, Stevenson \cite{Ste7}, and
Casc\'on et al \cite{08CKNS} in that the regularity of the triple 
$(u,f,A)$ enters. It was recently shown in \cite{CDN} that in the case $A=a\ I$,
where $a$ is a piecewise constant function and $I$ the identity
matrix, $u\in\cA^s$ implies $f\in\cB^s$, provided $s<S$. 
\bnew{No such a result exists for $A$, which entails a nonlinear (multiplicative)
  relation with $u$.}

\subsection{$\veps$ -- approximation and class optimal performance}\label{SS:useful}
As already noted  in \S \ref{SS:Performance}, the context on which we
invoke \MAIN is unusual 
in the sense that the diffusion coefficient and the right hand
sides may change between iterations. 
Therefore, to justify \eqref{optmain} in our current setting, we will need some
observations about how it is proved for instance in
\cite{04BDD,Ste06,08CKNS};  see also \cite{NSV:09}.

Let $\hat u \in H^1_0(\Omega)$ be the solution of \eqref{wproblem}
with data $\hat A$ and $\hat f$. 
Let $\hat U=\GAL(\cT,\hat A,\hat f) \in \mathbb V(\cT)$ be the
Galerkin solution subordinate to the input
subdivision $\cT$ and let $\hat \veps := \|\hat u- \hat U\|_{H^1_0(\Omega)}$ 
be the error achieved by the Galerkin solution on $\cT$. 
The control on how many cells are selected by \MAIN is done 
by comparing with the smallest partition $\cT^*$ which achieves accuracy  $\mu
\hat\veps$ for some $0<\mu\le 1$ depending on the D\"orfler marking
parameter and the scaling constants in the upper and lower a
posteriori error estimates \bnew{\cite{Ste7,08CKNS,NSV:09}}. 
That is, if $\cM$ denotes the set of selected (marked) cells for refinement, one
compares $\#\cM$ with $\#(\cT^*\oplus \cT)-\#\cT_0$ to obtain 
\be
\label{only2}
\#\cM\le \tilde C_3|\hat u|_{\cA^s}^{1/s} (\mu \hat\veps)^{-1/s},
\ee
whenever $\hat u \in \cA^s$ and where $\tilde C_3$ is an absolute
constant (depending on $s$).

First, it is important to realize that the argument leading to
\eqref{only2} does not require the
full regularity $\hat u \in \cA^s$ but only that $\hat u$ is an
$\mu\hat\veps$-approximation of order $s$ to some $v \in \cA^s$;
see \S\ref{SS:Performance}.

Second, we note that several sub-iterations within $\MAIN(\cT,\hat A,\hat
f,\veps)$ might be required to achieve the tolerance $\veps$.
However, each sub-iteration selects a number of cells satisfying
\eqref{only2}, \bnew{with $\veps$ instead of $\hat\veps$}, 
and the number of sub-iterations is \bnew{dictated by the ratio $\hat\veps/\veps$; 
see \eqref{inner-iter}.}
Our new \AFEM algorithm will keep this ratio bounded thereby
ensuring that the number of sub-iterations within \MAIN remains uniformly bounded.

 In conclusion, combining Lemma \ref{l:approx} with \eqref{only2},
we realize that $N=N(\hat u)$, 
the number of \bnew{cells} marked for refinement by $\MAIN(\cT,\hat A,\hat
f,\veps)$ to achieve the desired tolerance $\veps$, satisfies
\begin{equation}\label{optmainour}
N \leq C_3 |u|_{\cA^s}^{1/s}\veps^{-1/s}
\end{equation}
provided that $\hat u$ is an $\mu\hat\veps$--approximation of order $s$ to $u\in
\cA^s(\cT_0,H^1_0(\Omega))$ and that each call of \MAIN corresponds to
a ratio $\hat\veps/\veps$ uniformly bounded. This
proportionality constant is absorbed into $C_3$.
We will use this fact in the analysis of our new AFEM algorithm.

\section{AFEM for Discontinuous Diffusion Matrices: \DISC}  
We are  now in the position to formulate our new AFEM which will be denoted by \DISC.
   It will consist of  three main modules  \APPROX,  \COEFF and \MAIN.  While
   the  algorithms \APPROX and \MAIN are standard, we recall that \COEFF
   requires an approximation of $A$ in $L_q(\Omega)$ instead of
   $L_\infty(\Omega)$ for some $q=2p/(p-2)$ where $p$ is such that \CONDP holds.

We assume that the three  algorithms \APPROX, \COEFF, and \MAIN are known to
be class optimal for   all $0<s\le S$  with $S>0$.  The algorithm \DISC inputs an initial
conforming subdivision $\mathcal T_0$, an
initial tolerance $\veps_1$, the matrix $A$ and the right side $f$ for which
we know the solution $u$ of \eqref{wproblem} satisfies $\| \nabla
u\|_{L_p(\Omega)}\leq C_p \| f\|_{W^{-1}(L_p)}$, see \CONDP in
\S\ref{ss:Lp}.   
We now fix constants  $0<\omega,\beta<1$  such that 
\be
\label{constconditions}
\omega\le \frac{r \mu \bnew{C_L}}{\bnew{2} C_2(1+C_p \| f\|_{W^{-1}(L_p(\Omega))})},
\ee
where $\mu\le 1$ is the constant of \S \ref{SS:useful}, \bnew{$C_2$
appears in the uniform bound \eqref{COEFF1} on the eigenvalues and $C_L$ is the lower bound constant in \eqref{e:equiv}.}
  
Given an initial mesh $\cT_0$ and parameters
$\veps_0,\omega,\beta$, the algorithm \DISC sets $k:=0$ and
iterates:
\medskip
\begin{algotab}
\> $[\cT_{k}(f), f_{k}] = \APPROX (\cT_k,f, \omega \eps_k)$\\
\> $[\cT_{k}(A),  A_{k}] = \COEFF (\cT_k(f),A, \omega \eps_k)$\\
\> $[\cT_{k+1},U_{k+1}] = \MAIN (\cT_{k}(A),  A_{k}, f_k, \eps_k/2)$\\
\> $\veps_{k+1} = \beta \veps_{k}$; $k \leftarrow k+1$.
\end{algotab}
\medskip
 
The following theorem shows the optimality of \DISC.  
\begin{theorem}[optimality  of \DISC]\label{t:complexity}
Assume that the three algorithms \APPROX, \COEFF and \MAIN are of class optimal for all $0<s\leq S$ for some $S>0$.
In addition, assume that the right side $f$ is in $ \cB^{s_f}(H^{-1}(\Omega))$ with
$0<s_f\leq S$, that \CONDP holds for some $p>2$ and  that the diffusion matrix $A$ is positive definite, in
$L_\infty(\Omega)$ and in $\cM^{s_A}(L_q(\Omega))$ for $q:=\frac{2p}{p-2}$ and
$0<s_A\leq S$.     
Let $\cT_0$ be the initial subdivision and $U_k \in \mathbb V(\cT_k)$ be the Galerkin solution obtained at the $k$th
iteration of the algorithm \DISC. Then,  
 whenever $u\in\cA^{s_u}(H^1_0(\Omega))$ for
$0<s_u\leq S$, we have for $k\ge1$
\be
\label{mte}
\|u-U_k\|_{H_0^1(\Omega)}\le \bnew{C_U} \veps_{k-1},
\ee
\bnew{where $C_U$ is the upper bound constant in \eqref{e:equiv}}
and  
\be
\label{mt}
\#\cT_{k}-\#\cT_0 \leq C_4 \Big(|u|_{\cA^{s}(H_0^1(\Omega))}^{1/s} +
   |A|_{\cM^{s}(L_q(\Omega))}^{1/s} +
   |f|_{\cB^{s}(H^{-1}(\Omega))}^{1/s}
   \Big) \veps_{k-1}^{-1/s},
\ee
\bnew{with $C_4:=\frac{C_0 C_3\omega^{-1/s}}{1-\beta^{1/s}}$} and $s=\min(s_u,s_A,s_f)$.
\end{theorem}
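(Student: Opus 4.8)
The plan is to track two quantities along the iteration: the energy error $\|u-U_k\|_{H_0^1(\Omega)}$ and the cumulative number of cells marked. First I would establish the error bound \eqref{mte}. At iteration $k$, the calls to \APPROX and \COEFF produce $f_k$ and $A_k$ with $\|f-f_k\|_{H^{-1}(\Omega)}\le\omega\veps_k$ and $\|A-A_k\|_{L_q(\Omega)}\le\omega\veps_k$, and by the positive-definiteness guarantee \eqref{COEFF1} the eigenvalues of $A_k$ obey bounds comparable to those of $A$ (in particular a uniform $\hat r\gtrsim r/C_2$). The call to \MAIN then drives the Galerkin error for the perturbed problem below $\frac{C_U}{2}\veps_k$, as recorded in the displayed inequality preceding \eqref{secondstep}. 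Combining this with the perturbation estimate \eqref{PT1} applied to $(A,f)$ versus $(A_k,f_k)$, and invoking \CONDP to replace $\|\nabla u\|_{L_p(\Omega)}$ by $C_p\|f\|_{W^{-1}(L_p(\Omega))}$, gives exactly the chain \eqref{secondstep}; the constant condition \eqref{constconditions} on $\omega$ is precisely what makes the perturbation contribution absorbable, yielding $\|u-U_{k+1}\|_{H_0^1(\Omega)}\le C_U\veps_k$. Shifting the index gives \eqref{mte}.

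Next I would prove the complexity bound \eqref{mt}. The key is that \MAIN is invoked in the nonstandard setting of \S\ref{SS:useful}: the data $A_k,f_k$ change with $k$, so one cannot directly use $\hat u\in\cA^s$. Instead, by Lemma \ref{l:approx}, since $\|u-u_k\|_{H_0^1(\Omega)}$ is controlled (of order $\veps_k$) and $u\in\cA^{s_u}$, the perturbed solution $u_k$ is an $O(\veps_k)$-approximation of order $s_u$ to $u$; hence \eqref{optmainour} applies and the number of cells marked by \MAIN at step $k$ is $\lesssim C_3|u|_{\cA^{s_u}}^{1/s_u}\veps_k^{-1/s_u}$, provided the ratio $\hat\veps_k/\veps_k$ stays bounded — which it does here because the previous iteration already delivered error $O(\veps_{k-1})=O(\beta^{-1}\veps_k)$, so the number of inner sub-iterations is uniformly bounded via \eqref{inner-iter}. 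Similarly, class optimality of \APPROX on $\cB^{s_f}$ gives $N(f)\lesssim C_3|f|_{\cB^{s_f}}^{1/s_f}(\omega\veps_k)^{-1/s_f}$, and of \COEFF on $\cM^{s_A}$ gives $N(A)\lesssim C_3|A|_{\cM^{s_A}}^{1/s_A}(\omega\veps_k)^{-1/s_A}$, using \eqref{prophatA} so that the positive-definite $A_k$ costs no more than the free best approximation.

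Finally I would sum. Writing $s=\min(s_u,s_A,s_f)$ and using $\veps_k=\beta^k\veps_0$ so that $\veps_j^{-1/s}=\beta^{(k-1-j)/s}\veps_{k-1}^{-1/s}$ for $j\le k-1$, the total number of marked cells through iteration $k$ is bounded by a geometric sum
\[
\sum_{j=0}^{k-1} C_3\,\omega^{-1/s}\big(|u|_{\cA^{s}}^{1/s}+|A|_{\cM^{s}}^{1/s}+|f|_{\cB^{s}}^{1/s}\big)\veps_j^{-1/s}
\le \frac{C_3\,\omega^{-1/s}}{1-\beta^{1/s}}\big(|u|_{\cA^{s}}^{1/s}+|A|_{\cM^{s}}^{1/s}+|f|_{\cB^{s}}^{1/s}\big)\veps_{k-1}^{-1/s}.
\]
Multiplying by the conforming-completion factor $C_0$ from \eqref{nv1} to pass from marked cells to the actual cardinality increase $\#\cT_k-\#\cT_0$ gives \eqref{mt} with $C_4=\frac{C_0C_3\omega^{-1/s}}{1-\beta^{1/s}}$, since each module's output is refined to a conforming mesh at cost controlled by \eqref{nv1} and the overlays only add lower-order cell counts by \eqref{e:overlay}.

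I expect the main obstacle to be the second step — rigorously justifying that \MAIN is class optimal in this moving-data context. One must be careful that the $\mu\hat\veps_k$-approximation argument of \S\ref{SS:useful} is invoked with the right reference function ($u$, not $u_k$ or $U_k$), that the inner sub-iteration count from \eqref{inner-iter} stays bounded uniformly in $k$ (this is where the choice $\veps_{k+1}=\beta\veps_k$ with fixed $\beta<1$ is essential), and that the conformity-restoration refinements, which need not themselves satisfy an optimality bound, are absorbed into the $C_0$ factor rather than spoiling the estimate. The remaining steps are essentially bookkeeping given the tools assembled in \S\S\ref{s:AFEM}--\ref{SS:useful}.
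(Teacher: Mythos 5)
Your plan reproduces the paper's proof: the error bound \eqref{mte} via \eqref{PT1}, \CONDP, \eqref{COEFF1} and the choice \eqref{constconditions} of $\omega$; the cardinality bound via Lemma \ref{l:approx}, \eqref{optmainour}, the class optimality of \APPROX and \COEFF, the conformity factor $C_0$ from \eqref{nv1}, and the geometric summation giving $C_4=\frac{C_0C_3\omega^{-1/s}}{1-\beta^{1/s}}$. The only step you leave implicit — the uniform bound on $\hat\veps_j/\veps_j$ — is handled in the paper exactly as you anticipate, by combining the lower bound in \eqref{e:equiv} with the Galerkin quasi-best-approximation property in the $A_j$-energy norm (which brings in the factor $C_2(M/r)^{1/2}$ when passing from $U_j$ to $U_j(A)$) and the previous iteration's error $C_U\veps_{j-1}=C_U\veps_j/\beta$.
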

\begin{proof}
Let us first prove \eref{mte}.   We denote by $u_k$ the solution to
\eref{wproblem} for the diffusion matrix $A_k$ and right side $f_k$.  From
\eref{PT1} and since \CONDP holds, we have
$$
\|u-u_k\|_{H_0^1(\Omega)} 
\le \hat r^{-1}\|f-f_k\|_{H^{-1}(\Omega)}+\hat r^{-1}\|\nabla u\|_{L_p(\Omega)}\|A-A_k\|_{L_q(\Omega)}. 
$$
In addition, the restriction \eref{constconditions} on $\omega$ and the bound
\eqref{COEFF1} on the eigenvalues of $A_k$ lead to
\begin{equation}\label{secondstep2}
\|u-u_k\|_{H_0^1(\Omega)} 
\le  C_2  r^{-1}(1+C_p\| f\|_{W^{-1}(L_p)})\omega\veps_k \le 
 \bnew{\frac{\mu\veps_k C_L}{2}}.
\end{equation}
\bnew{In view of \eqref{e:equiv} \bnew{and \eqref{upper-eps}}, 
$U_{k+1}$ satisfies $\|u_k-U_{k+1}\|_{H_0^1(\Omega)}\le C_U \eta_{\cT_{k+1}}(U_{k+1},\bnew{A_k,f_k,}\cT_{k+1}) \le C_U \veps_k/2$.
Moreover, we have $C_L\leq C_U$ and
$\mu\le 1$ so that the triangle inequality yields \eqref{mte}
$$
\|u-U_{k+1}\|_{H_0^1(\Omega)} \leq C_U \veps_k,\qquad\forall k\ge0.
$$
}

Next, we prove \eref{mt}.
At each step $j$ of the algorithm, the new partition $\cT_j$ is generated from
$\cT_{j-1}$ by selecting cells for refinement and possibly others to ensure the
conformity of $\cT_{j}$.
We denote by $N_j(f)$, $N_j(A)$, and \bnew{$N_j(u)$} the number of cells selected for refinement
by the routines \APPROX, \COEFF, and \MAIN respectively.
The bound \eqref{nv1} accounts for the extra refinements to
create conforming subdivisions, namely
$$
\#\cT_k - \#\cT_0 \leq C_0 \sum_{j=0}^{k-1}\big(N_j(f)+N_j(A)+\bnew{N_j(u)}\big).
$$

The class optimality assumptions of \APPROX and \COEFF directly imply that
$$
N_j(f) \leq C_3  
\bnew{|f|^{1/{s}}_{\cB^{s}(H^{-1}(\Omega))}
\big(\omega\veps_j\big)^{-1/s}
},
\quad
\text{and} \quad
N_j(A) \leq C_3  
\bnew{|A|^{1/{s}}_{\cM^{s}(L_q(\Omega))}
\big(\omega\veps_j\big)^{-1/s},
}
$$
\bnew{because $s\le s_f,s_A$.}
We cannot directly use that $u\in\cA^s$ to bound \bnew{$N_j:=N_j(u)$}, because
  $N_j$ is dictated by the inherent scales of $u_j$, the solution of
\eqref{wproblem} with diffusion coefficient $A_j$ and right hand side
$f_j$. Let $U_j(A)=\GAL(\cT_j(A),A_j,f_j)$, set 
\bnew{
$\hat\veps_j:= \eta_{\cT_j(A)}(U_j(A),A_j,f_j;\cT_j(A))$ so that from \eqref{e:equiv} we have $\| u_j - U_j(A)\|_{H^1_0(\Omega)} \geq C_L \hat \veps_j$,} and
assume that $\hat\veps_j>\veps_j/2$ for otherwise the call 
$\MAIN(\cT_j(A), A_j, f_j,  \veps_j/2)$ is skipped and $N_j=0$.

In view of \bnew{\eqref{inner-iter} and the discussion of
  \S\ref{SS:useful}}, estimate \eqref{optmainour}
is valid upon proving that \bnew{the ratio $\hat\veps_j/\veps_j$} is uniformly
bounded with respect to the iteration \bnew{counter} $j$ and that $u_j$ is an
$\mu \bnew{C_L\veps_j}$-approximation of order $s$ to $u$.
The latter is direct consequence of  the estimate $\|u-u_j\|_{H^1_0(\Omega)}\leq
\mu \veps_j \bnew{C_L}/2$ given in \eqref{secondstep2} and Lemma
\ref{l:approx}, so that only the uniform bound on 
\bnew{$\hat\veps_j/\veps_j$} remains to be proved.

We recall the Galerkin projection property
$$
\| A_j^{1/2} \nabla (u_j- U_j(A))\|_{L_2(\Omega)} \leq 
\| A_j^{1/2} \nabla (u_j-V)\|_{L_2(\Omega)} 
$$
holds for any  $V \in \mathbb V(\cT_j(A))$ and in particular for
$U_j\in \mathbb V(\cT_j) \subset \mathbb V(\cT_{j}(A))$ because
$\cT_j(A)$ is a refinement of $\cT_j$.
If $r_j=\gamma_{\min}(A_j)$ and $M_j=\gamma_{\max}(A_j)$ denote the minimal
and maximal eigenvalues of $A_j$, then the above Galerkin projection
property, \bnew{the lower bound in \eqref{e:equiv}, and \eqref{COEFF1}} yield
\begin{equation*}
\begin{aligned}
\hat\veps_j & \leq \bnew{C_L^{-1}} \|u_j-U_j(A)\|_{H^1_0(\Omega)} \leq 
\bnew{C_L^{-1}} (M_j/r_j)^{1/2} \|u_j-U_j\|_{H^1_0(\Omega)} \\
&\leq \bnew{C_L^{-1} C_2(M/r)^{1/2}} \left(\|u-U_j\|_{H^1_0(\Omega)}+\| u-u_j \|_{H^1_0(\Omega)}  \right).
\end{aligned}
\end{equation*}
 Combining \eqref{mte} and $\veps_j=\beta
\veps_{j-1}$, 
together with \eqref{secondstep2}, implies the
desired bound
$$
\hat\veps_j/\veps_j 
\le \bnew{C_L^{-1}C_2(M/r)^{1/2}\big(C_U/\beta+\mu C_L/2\big).}
$$

The argument given in \S\ref{SS:useful} guarantees the bound
\eqref{optmainour}, namely 
$$
N_j \leq C_3 |u|_{\cA^{s_u}(H^1_0(\Omega))}^{1/2} \veps_j^{-1/s}.
$$
Gathering the
bounds on  $N_j(f)$, $N_j(A)$ and \bnew{$N_j(u):=N_j$}, 
and using that $\omega<1$, we deduce
$$
\#\cT_k - \#\cT_0 \leq C_0 C_3 \bnew{\omega^{-1/s}} \Big(|A|_{\cM^{s}(L_q(\Omega))}^{1/s} +
   |f|_{\cB^{s}(H^{-1}(\Omega))}^{1/s}
   +|u|_{\cA^{s}(H_0^1(\Omega))}^{1/s} 
  \Big) \sum_{j=0}^{k-1}\veps_j^{-1/s}.
$$
The desired estimate \eqref{mt} is obtained after writing
$\veps_j=\beta^{k-j}\veps_k$ and recalling that $0<\beta<1$ so that
 $\sum_{j=0}^{k-1}\beta^{j/s} \leq (1-\beta^{1/s})^{-1}$.
  \end{proof}

Note that, we could as well state the conclusion of Theorem \ref{t:complexity} as
$$
\|u-U_k\|_{H_0^1(\Omega))}\le C\Big(
|u|_{\cA^{s}(H_0^1(\Omega))}+|f|_{\cB^{s}(H^{-1}(\Omega))}+|A|_{\cM^{s}(L_q(\Omega))}
\Big) (\#\cT_k-\#\cT_0)^{-s},
$$
 for a constant $C$ independent of $k$, whence \DISC has optimal
performance according to \eqref{moremodest}.

\begin{remark}[the case $s<s_u$]\label{ss:optimality_disc}
\rm
We briefly discuss why the decay rate  $s=\min(s_u,s_f,s_A)$ cannot be
improved in \eqref{mt} to $s_u$ (the optimal rate for the approximation of $u \in
\cA^{s_u}$) by any algorithm using approximations $\hat A$ of $A$ and $\hat f$ of $f$.
We focus on the effect of the diffusion coefficient $A$, assuming the right hand
side $f$ is exactly captured by the initial triangulation  $\cT_0$, since 
a somewhat simpler argument holds for the approximation of $f$.

 The approximation of $u$ by $\hat U$, the Galerkin solution with diffusion
coefficient $\hat A$, cannot be better than that of $A$ by $\hat A$.
Indeed, there are two constants $c$ and $C$ such that for any $\delta >0$
\begin{equation}\label{e:noise}
c \delta \leq \sup_{A_1,A_2 \in B(A,\delta)} || u_{A_1} - u_{A_2}||_{H^1_0(\Omega)}
\leq C  \delta,
\end{equation}
where
$u_{A_i} \in H^1_0(\Omega)$, $i=1,2$, are the weak solutions of
$
-\text{div}(A_i \nabla u_{A_i}) = f
$
and for $\delta>0$
$$
B(A,\delta):=\left\lbrace d\times d \ \text{positive matrices} \ B \ | \
  ||A-B||_{L_\pA(\Omega)} \leq  \delta \right\rbrace.
$$
While the right inequality is a direct consequence of the perturbation theorem
(Theorem \ref{T:perturbation}), the left inequality is obtained by the
particular choice $A_1=A$ and $A_2=\big(1+\frac{\delta}{||A||_{L_\pA(\Omega)}}\big)^{-1}A$.
In fact, this choice implies that $A_1,A_2\in B(A,\delta)$ and
$
u_{A_2} = \big(1+\frac{\delta}{||A||_{L_\pA(\Omega)}}\big)u_{A_1}.
$
Therefore
\begin{equation*}
\begin{aligned}
||u_{A_1}-u_{A_2} ||_{H^1_0(\Omega)} &=
\frac{\delta}{||A||_{L_\pA(\Omega)}} || u_{A_1} ||_{H^1_0(\Omega)} 
\\
& \geq
\frac{\delta}{M^{1/2} \|A\|_{L_\pA(\Omega)}} || A^{1/2}
\nabla u_{A_1} ||_{L_2(\Omega)}
\geq \frac{r^{1/2} || f ||_{H^{-1}(\Omega)}}{M^{1/2} ||A||_{L_\pA(\Omega)}}
~\delta, 
\end{aligned}
\end{equation*}
where $0<r<M<\infty$ are the lower and upper bounds for the
\bnew{eigenvalues} of $A$.
\end{remark}

\section {Algorithms \APPROX and \COEFF}\label{SapproxA}
We have proven \bnew{the optimality of \DISC in Theorem \ref{t:complexity}}
provided the subroutines \APPROX and \COEFF are themselves optimal.    In this section, we discuss what is known about the construction of optimal algorithms for \APPROX and \COEFF.  
\bnew{Recall that \APPROX constructs an approximation of $f$ in $H^{-1}$ while $\COEFF$ an approximation of $A$ in $L_q$.}
 A construction of algorithms of this type can be made at two levels.
The first, which we shall call the {\it theoretical level},  addresses this problem by assuming we have complete knowledge of $f$ or $A$ and anything we need about
them can be computed free of cost.  This would be the case for example if $f$
and $A$ were known piecewise  smooth functions on some fixed known partition (which
could be unrelated to the initial partition). The second level, which we call the {\it practical level}, realizes that in most  applications of AFEMs, we do not precisely know $f$ or $A$ but what we can do,  for example, is compute for any chosen query point $x$ the value of these functions to high precision.  It is obviously easier to construct theoretical algorithms and we shall primarily discuss this issue.

\subsection{Optimal algorithms for adaptive approximation of  a function}
\label{SS:approxfunctions}
  The study and construction of algorithms like  \APPROX is a central subject not only in adaptive finite element methods but also in approximation theory.   
These algorithms are needed in all AFEMs.   The present paper is not intended to advance  this particular subject.  Rather, we want only to give
an overview of what is known about such algorithms both at the theoretical and practical level.  
We begin by discussing
$L_q$ approximation.

Two adaptive algorithms were introduced in \cite{BD} for approximating functions and were proven to be optimal in several settings.
These algorithms   are built on  local error estimators.
\bnew{Given $g\in L_q(\Omega)$, we define the local $L_q$ error in a
polyhedral cell $T$ by}
\be
\label{local error}
E(T):=E(g,T)_{L_q(T)}:=\inf_{P\in\mathbb P_{m}(T)}\|g-P\|_{L_q(T)},
\ee
where $\mathbb P_{m}(T)$ is the space of polynomials of degree $\leq m$ over $T$.  Given a
partition $\cT$, the best approximation to $g$ by piecewise
polynomials of degree $\leq m$ is obtained by taking the best polynomial approximation  $P_T$  to $g$ on $T$ for each $T\in\cT$ and then
\be
\label{ba}
S_\cT:=\sum_{T\in\cT}P_T\chi_T,
\ee
where $\chi_T$ is the
characteristic function of $T$.
Its global error is
\be
\label{ba1}
\cE(g,\cT) := \|g-S_\cT\|_{L_q(\Omega)}
=\Big(\sum_{T\in\cT} E(g,T)_{L_q(T)}^q\Big)^{1/q}.
\ee
So finding good approximations to $g$  reduces to 
finding good partitions $\cT$ with small cardinality.

The algorithms in \cite{BD} adaptively build   partitions   by examining the local errors $E(g,T)_{L_q(\Omega)}$ for $T$ in the current partition and then refining some of these cells based not only on the size of this error but also the past history.  The procedure penalizes cells which arise from previous refinements that did not
significantly reduce the error.  The main result of \cite{BD} is that for $0<q<\infty$,  these two algorithms start from $\cT_0$ and construct partitions $\cT_n$, $n=1,2,\dots$,
such that $\cT_n\in\fT_{c n}$ and
\be
\label{BDoptimal}
\cE(g,\cT_n)\le C \ \sigma_{ n}(g)_{L_q(\Omega)},\quad n=1,2,\dots,
\ee
where $c>1$ and $C>1$ are fixed constants. In view of \eqref{e:overlay}, it  follows   that these algorithms are both optimal for $L_q$ approximation, $1\le q<\infty$, for all $s>0$.

While the above algorithms are optimal for $L_q$ approximation, they are often replaced by the simpler strategy of marking and refining only the cells with largest local error.   We describe and discuss one of these strategies known as the   {\it greedy algorithm}. Given {\it any}  
refinement $\cT$ of the initial mesh $\cT_0$, the procedure
$\cT' =\cT'(\veps)= \GREEDY (\cT,g,\veps)$ constructs a \bnew{conforming}
refinement $\cT'$ of $\cT$ such that
$\cE(g,\cT') \le \veps$.

To describe the algorithm, we first recall that  the
bisection rules of \S \ref{SS:partitions} define a unique forest $\fT$
emanating from $\cT_0$.  The elements in this forest can be given a unique lexicographic ordering. 
The algorithm reads:
\medskip
\begin{algotab}
\> $\cT(\veps) = \GREEDY(\cT,g,\veps)$ \\
\> $\cT' = \cT$; \\
\> {\tt while} $\cE(g,\cT') > \veps$ \\
\> \>  $T := \argmax \big\{E(g,T) \, : \, T\in\cT'\big\}$; \\
\> \>  $\cT' := {\REFINE} (\cT', T)$; \\
\> {\tt end while}\\
\> $\cT(\veps)=\CONF(\cT')$      
\end{algotab}
\medskip
\bnew{The procedure $\cT' = \REFINE (\cT,T)$  replaces $T$ by its two
children. The selection of $T$ is done by choosing the smallest
lexicographic $T$ to break ties. Therefore,}
we see that $\GREEDY$ chooses an element $T\in\cT$ with largest
error $E(g,T)$  and replaces $T$ by its two children
to produce the next \bnew{non-conforming} refinement $\cT'$ until
the error $\cE(g,\cT')$ is below the prescribed tolerance $\veps$. 
Upon exiting the while loop, additional refinements are made on $\cT'$ by \CONF to obtain the smallest  conforming partition $\cT(\veps)$ which contains $\cT'$.

An important property of the local error 
$E(T)$ is its monotonicity
\begin{equation}\label{local-monotonicity}
E(T_1)^q + E(T_2)^q \le E(T)^q \qquad\forall T\in\cT, 
\end{equation}
where $T_1,T_2$ are the two children of $T$. This leads to a global
monotonicity property
\begin{equation}\label{e:monotone_greedy}
\cE(g,\cT') \le \cE(g,\cT)
\end{equation}
for all refinements $\cT'$ of $\cT$ whether conforming or not.

While the greedy algorithm is not proven to be optimal in the sense of giving the rate $O(n^{-s})$ for the entire class
$\cB^s(L_q(\Omega))$,   it is known to be optimal on subclasses of $\cB^s$.  For example, it is known that any finite ball
in the Besov space $B^s_\infty(L_\tau(\Omega))$ with $s/d > 1/\tau -
1/q$ and $0<s\le m+1$ is contained in \bnew{$\cB^{s/d}(L_q(\Omega))$}.
The following proposition shows that the greedy algorithm is optimal
on these Besov balls.

\begin{prop}[performance of $\GREEDY$]\label{P:class-opt}
If $g\in B^s_\infty(L_\tau(\Omega))$ with $s/d > 1/\tau - 1/q$ and $0<s\le m+1$, then $\GREEDY$
terminates in a finite number of steps
and marks a total number of elements $N(g):=\#\cT'-\#\cT$ satisfying
\begin{equation}\label{n-greedy}
N(g) \le C_3 |g|_{B_\infty^s(L_\tau(\Omega))}^{d/s} \, \veps^{-d/s}
\end{equation}
with a constant $C_3$ depending only on $\cT_0, |\Omega|, \tau, s$ and
$q$. Therefore, $g\in\cB^{s/d}(\cT_0,L_q(\Omega))$ with
$|g|_{\cB^{s/d}(\cT_0,L_q(\Omega))} \lesssim |g|_{B_\infty^s(L_\tau(\Omega))}$.
\end{prop}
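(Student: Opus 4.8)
The plan is to track the greedy algorithm through its bisection steps and show that the local error on any produced cell decays geometrically with the number of bisections used to create it. First I would invoke the embedding of $B^s_\infty(L_\tau(\Omega))$ into the approximation space: a standard estimate (Whitney/Bramble--Hilbert type) gives, for any cell $T$ arising in the forest $\fT$ from $j$ bisections of a root cell, a bound of the form $E(g,T)_{L_q(T)} \lesssim |T|^{s/d - 1/\tau + 1/q} |g|_{B^s_\infty(L_\tau(\omega_T))}$ on a suitable neighborhood $\omega_T$, using $0<s\le m+1$ so polynomials of degree $m$ resolve the smoothness. Because newest-vertex bisection halves the measure at each step, $|T|\sim 2^{-j}|\Omega|$, and the exponent $s/d - 1/\tau + 1/q$ is strictly positive by hypothesis, so $E(g,T) \lesssim \rho^{j}$ for some $\rho<1$ times a local seminorm. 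Summing the $\tau$-th (or appropriately weighted) powers of these local seminorms over any antichain of cells is controlled by $|g|_{B^s_\infty(L_\tau(\Omega))}$ up to a bounded overlap constant of the neighborhoods $\omega_T$.

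Next I would use this decay to count the cells the greedy algorithm marks. At the step when $\GREEDY$ selects a cell $T$, that cell has the largest local error in the current partition $\cT'$, and $\cE(g,\cT')>\veps$; since $\cE(g,\cT')^q = \sum_{T'\in\cT'} E(g,T')^q \le (\#\cT') \max_{T'} E(g,T')^q$, the selected cell satisfies $E(g,T) > \veps / (\#\cT')^{1/q}$. Combined with the geometric decay $E(g,T)\lesssim \rho^{j(T)}$ (where $j(T)$ is the generation of $T$), this forces $j(T)$ to be at most logarithmic in $\#\cT'/\veps$, which in turn bounds how deep the tree can grow and hence how many marked cells there can be. The cleanest route is the counting argument from \cite{BD}: partition the marked cells by generation, use that at generation $j$ only $O(\min(\#\cT_0 2^{j}, |g|_{B^s_\infty}^{d/s}\rho^{jq\cdot(\ldots)}))$ can be selected before all cells at that level have error below threshold, and sum the two competing bounds, which are balanced at $j \sim \frac{d}{s}\log(|g|/\veps)$. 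This yields the claimed $N(g)\le C_3 |g|_{B^s_\infty(L_\tau(\Omega))}^{d/s}\veps^{-d/s}$. Finiteness of the procedure follows because the total error $\cE(g,\cT')$ is monotone decreasing by \eqref{e:monotone_greedy} and the geometric decay guarantees it passes below $\veps$ after finitely many refinements.

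Finally, the concluding assertion $g\in\cB^{s/d}(\cT_0,L_q(\Omega))$ with $|g|_{\cB^{s/d}(\cT_0,L_q(\Omega))}\lesssim |g|_{B^s_\infty(L_\tau(\Omega))}$ follows by reinterpreting \eqref{n-greedy}: taking any $n$ and setting $\veps := C_3^{s/d}|g|_{B^s_\infty(L_\tau(\Omega))} n^{-s/d}$, the algorithm produces a partition with at most $\#\cT_0 + n$ elements (up to the conforming-completion factor, which is $O(1)$ per the bisection theory) and global error $\cE(g,\cT')\le\veps$, hence $\sigma_n(g)_{L_q(\Omega)}\lesssim |g|_{B^s_\infty(L_\tau(\Omega))} n^{-s/d}$, which is exactly membership in $\cB^{s/d}(L_q(\Omega))$ with the stated seminorm bound.

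I expect the main obstacle to be the counting step: getting the right tradeoff between the ``breadth'' bound (at most $\#\cT_0\,2^j$ cells exist at generation $j$) and the ``decay'' bound (few of them have error above threshold) and summing a geometric series whose ratio depends delicately on whether $s/d - 1/\tau + 1/q > 0$. The Whitney-type local estimate and the bounded-overlap of neighborhoods are technical but standard; the subtlety is that the greedy selection is history-independent, so one cannot a priori control which cells get refined, and the argument must be purely extremal — bounding the generation of \emph{any} selected cell using only that it is the current maximizer and that the partition is not yet accurate enough.
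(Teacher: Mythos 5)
There is a genuine gap, and it sits exactly at the point the paper flags as the non-standard part of this proposition: $\GREEDY$ is started from an \emph{arbitrary} admissible refinement $\cT$ of $\cT_0$, and the bound \eqref{n-greedy} on $N(g)=\#\cT'-\#\cT$ must hold with a constant independent of $\#\cT$. Your argument is, as you yourself put it, ``purely extremal'': you count cells of the forest whose local error exceeds a threshold. But the only threshold the greedy selection gives you is $E(g,T)>\veps/(\#\cT_c)^{1/q}$, where $\cT_c$ is the current partition, so $\#\cT_c\ge\#\cT$. Feeding this into the Whitney-based count of cells above a threshold (which scales like $(|g|/\delta)^{dq/(sq+d)}$) and running the bootstrap yields $N(g)\lesssim (|g|/\veps)^{dq/(sq+d)}\,(\#\cT+N(g))^{d/(sq+d)}$. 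This closes to $N(g)\lesssim(|g|/\veps)^{d/s}$ only when $\#\cT\lesssim(|g|/\veps)^{d/s}$ — in particular for $\cT=\cT_0$ — but for a large starting partition it leaves a bound that grows with $\#\cT^{d/(sq+d)}$, which is not what the proposition asserts. (Relatedly, your intermediate inequality $E(g,T)>\veps/(\#\cT')^{1/q}$ is circular as stated and needs this bootstrap to be made precise; and the counting machinery you want is that of \cite{BDDP} for the plain greedy on Besov balls, not \cite{BD}, whose algorithms are the penalized/modified greedies designed precisely because the plain greedy is not optimal on all of $\cB^s$.)

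The paper does not reprove the classical count at all: it cites \cite{BDDP,CDN,NSV:09} for \eqref{n-greedy} in the case $\cT=\cT_0$, and devotes its entire proof to the reduction $N(\cT,g)\le N(\cT_0,g)$, see \eqref{satisfies}. That reduction is structural rather than extremal: letting $\{T_i\}_{i=1}^{N}$ be the cells selected by $\GREEDY(\cT_0,g,\veps)$ and $\Lambda$ the indices of those $T_i$ not already refined in creating $\cT$, one shows — using the local monotonicity \eqref{local-monotonicity} and the lexicographic tie-breaking — that $\GREEDY(\cT,g,\veps)$ selects precisely the cells $T_i$, $i\in\Lambda$, in order, and by the global monotonicity \eqref{e:monotone_greedy} it must terminate once $\Lambda$ is exhausted, since the resulting partition then refines the output of $\GREEDY(\cT_0,g,\veps)$. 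This is the ingredient your proposal is missing; without it, the ``purely extremal'' route cannot produce a $\cT$-independent constant $C_3$.
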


Results of this type have a long history beginning with the famous theorems of Birman and Solomyak \cite{BirSol67} for Sobolev spaces, \cite{BDDP} for Besov spaces, and \cite{CDDD} for the analogous wavelet tree approximation; see also the expositions in \cite{BCMMN,CDN,NSV:09}. If $s/d=1/\tau-1/q$, it turns out that there are
functions in $B_\infty^s(L_\tau(\Omega))$ which are not in 
$\cB^{s/d}(\cT_0,L_q(\Omega))$ \cite{BDDP}. Therefore, the assumption 
$s/d>1/\tau-1/q$ of Proposition \ref{P:class-opt} 
is sharp in the Besov scale of spaces to obtain
$B^s_\infty(L_\tau(\Omega))\subset\cB^{s/d}(\cT_0,L_q(\Omega))$.
We may thus say that $\GREEDY$
has {\it near class optimal performance} in
$\cB^{s/d}(\cT_0,L_q(\Omega))$.

Proposition \ref{P:class-opt} differs from these previous results in
the marking of only one cell at each iteration.  However, its
proof follows the same reasoning as that given in \cite{BDDP} (see
also \cite{CDN,NSV:09}) except for the following important point.   
  The proofs in the literature assume that
the greedy algorithm begins with the initial partition $\cT_0$ and not a general partition $\cT$
as stated in the proposition.  This is an important distinction  since our algorithm \DISC is applied to general $\cT$.
 We now give a simple argument that shows that 
   the number of elements $N(g)=N(\cT,g)$
marked by $\GREEDY(\cT,g,\veps)$ starting from $\cT$ satisfies 
\be
\label{satisfies}
N(g)\le N=N(\cT_0,g)
\ee
and thus \eqref{n-greedy}. 
We first recall that the
bisection rules of \S \ref{SS:partitions} define a unique forest $\fT$
emanating from $\cT_0$ and a unique sequence of elements 
$\{T_i\}_{i=1}^{N}\subset\fT$ created by $\GREEDY(\cT_0,\veps)$.
Let $\cT'_{i+1} = \REFINE(\cT'_i,T_i)$ be the intermediate
subdivisions obtained within $\GREEDY(\cT_0,\veps)$ to refine
$T_i, 1\le i\le N$.
  Let $\Lambda$ be the set of indices $j\in\{1,\dots,N\}$ such that
  $T_j$ is never refined in the process to create $\cT$, i.e $T_j$ is either an element of $\cT$ or a successor of an element of $\cT$.
If $\Lambda=\emptyset$, then $\cT$ is a refinement of
$\cT'_{N+1}$, whence $N(g)=0$ and we have nothing to prove. 
 If $\Lambda \neq \emptyset$,
we let $j$ be the smallest index in $\Lambda$ and note that
$T_j\in\cT'_{j-1}$ with $\cT'_0=\cT_0$.
The definition of $\Lambda$ in conjunction with the minimality of
$\cT'_{j-1}$ implies that $\cT$ is a refinement of $\cT'_{j-1}$.
Since $T_j$ cannot be a successor of an element of $\cT$, because of
the definition of $T_j$ and the monotonicity property
\eqref{local-monotonicity}, we thus infer that
$T_j$ is an element of $\cT$.
This ensures that $T_j$ is the element with largest local error 
(with lexicographic criteria to break ties) among the elements of
$\cT$, and is thus the element selected by $\GREEDY(\cT,\veps)$.
Therefore, $\GREEDY(\cT,\veps)$ chooses in order the elements
$T_i$, $i\in\Lambda$, and stops when it exhausts $\Lambda$ if not
before, thereby leading to \eref{satisfies}.

Finally, let us note that a similar analysis can be given for the
construction of optimal algorithms for approximating the right hand
side $f$ in the $H^{-1}(\Omega)$ norm,
except that $H^{-1}(\Omega)$ is not a local norm. Since this is 
reported on in detail in \cite{CDN}
we do not discuss this further here. 

  \subsection{Optimal algorithms for \COEFF}\label{ss:coeff}
 Given an integer $m$, we recall the space $\Sigma_n:=\Sigma_n^m$ of
matrix valued piecewise polynomial functions of degree $\le m$, 
the error $\sigma_n(A)_{L_q(\Omega)}$, and the approximation classes $\cM^s(L_q(\Omega))$  that were   introduced in \S  \ref{SS:Performance}.  
Assume that $A\in L_\infty(\Omega)$ 
is a  positive definite matrix valued function whose eigenvalues satisfy
\be
\label{ev1}
r\le \lambda_{\min}(A)\le \lambda_{\max}(A)\le M.
\ee
 This is equivalent to
 \be
 \label{ev2}
r\le  \sum_{i,j=1}^da_{ij}(x)z_iz_j= z^tA(x)z\le M,\quad |z|=1.
\ee
The construction of an algorithm \COEFF to approximate $A$ by elements
$B\in\Sigma_n^m$ has two components.  The first one is to find good
approximants $B\in L_q(\Omega)$ for $q<\infty$.  The second issue is
to ensure that $B$ is also positive definite. We study the latter in
\S \ref{SS:positive} and the former in \S \ref{SS:approxA}.  

\subsubsection{ Enforcing positive definiteness}\label{SS:positive}
 High order approximations $B \in \Sigma_n^m$ of $A$, namely
$m>0$, may not be positive definite.
We now show how to adjust $B$ to make it 
positive definite without degrading its approximation to $A$.

\begin{prop}[enforcing positive definiteness locally]
\label{constlemma1}
Let $A$ be a symmetric positive definite matrix valued function in $L_{q}(\Omega)$ whose eigenvalues are in $[r,M]$ with $M\ge 1$. Let $T\subset \Omega$ be  any polyhedral cell  which may arise from newest vertex bisection applied to $\cT_0$.  If   there is a matrix valued function $B$ which is a polynomial of degree $\le m$ on $T$ and satisfies%
$$
\|A-B\|_{L_q(T)}\le \veps,
$$
 then there is a positive definite matrix $\wt B$ whose entries are also
 polynomials of degree $\leq m$ such that the
eigenvalues of $\wt B$ satisfy 
\be
r/2\le \lambda_{\min}(\wt B)\le \Lambda_{\max}(\wt B) \le CM
\ee
and 
$$
 \|A-\wt B\|_{L_q(T)}\le  \wt C\veps,
$$
where $ C$ depends only on $d,m$ and the initial partition $\cT_0$ 
and $\wt C$ depends additionally on $M/r$.
\end{prop}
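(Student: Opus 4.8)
The plan is to make two harmless normalizations and then split the argument according to whether $\veps$ is large or small relative to $|T|^{1/q}$. Since $A$ is symmetric and the spectral norm is invariant under transposition, replacing $B$ by $\frac12(B+B^t)$ does not increase $\|A-B\|_{L_q(T)}$ and does not raise the degree, so I may assume $B$ symmetric. Fix a small constant $c_0=c_0(d,\cT_0)\in(0,1)$, depending only on the shape--regularity of the cells that newest vertex bisection produces from $\cT_0$, to be pinned down below. If $\veps\ge c_0\,\frac r2\,|T|^{1/q}$, take $\wt B:=\frac1{|T|}\int_T A$, a constant symmetric matrix and hence a polynomial of degree $0\le m$: from $rI\preceq A(x)\preceq MI$ for a.e.\ $x\in T$ one gets $rI\preceq\wt B\preceq MI$, so the eigenvalue bounds hold (even with $r$ and $M$ in place of $r/2$ and $CM$), while $\|A-\wt B\|_{L_q(T)}\le\|A\|_{L_q(T)}+\|\wt B\|_{L_q(T)}\le 2M|T|^{1/q}\le\frac{4M}{c_0 r}\,\veps$. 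Hence only the regime $\veps< c_0\,\frac r2\,|T|^{1/q}$ requires work.

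In that regime, an inverse (Markov) inequality on the space of polynomials of degree $\le m$ over $T$ --- whose constant depends only on $d$, $m$ and the shape of $T$, hence only on $d,m,\cT_0$ --- together with $\|B-\frac1{|T|}\int_T A\|_{L_q(T)}\le\veps+2M|T|^{1/q}\lesssim M|T|^{1/q}$, yields $\|B\|_{L_\infty(T)}\le CM$ with $C=C(d,m,\cT_0)$. I then put $D^*:=\sup_{x\in T}\max(0,\tfrac r2-\lambda_{\min}(B(x)))$ and $\wt B:=B+D^*I$. By construction $\lambda_{\min}(\wt B(x))=\lambda_{\min}(B(x))+D^*\ge r/2$ for all $x$, and $\lambda_{\max}(\wt B)\le\|B\|_{L_\infty(T)}+D^*\le(2C+\tfrac12)M$, so the required eigenvalue bounds hold with a constant depending only on $d,m,\cT_0$; since $\|D^*I\|_{L_q(T)}=D^*|T|^{1/q}$, what remains is to prove $D^*|T|^{1/q}\le\wt C\veps$. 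To this end choose $x_0\in T$ with $\lambda_{\min}(B(x_0))=\frac r2-D^*$. Markov's inequality applied to the entries of $B$, together with $\|B\|_{L_\infty(T)}\le CM$, gives $\|B(x)-B(x_0)\|\le C_1M\,|x-x_0|/\diam(T)$ on $T$; since eigenvalues are $1$--Lipschitz for the spectral norm, $\lambda_{\min}(B(x))<r/2$ whenever $|x-x_0|<\delta_0:=D^*\diam(T)/(C_1M)$, and there $\|A-B\|>r/2$ by Weyl's inequality and $\lambda_{\min}(A)\ge r$. Choosing $c_0$ so small that the hypothesis $\veps< c_0\,\frac r2\,|T|^{1/q}$ forces the super--level set $\{x:\|A-B\|>r/2\}$ to have measure below that of any ball of radius $\diam(T)$ centered in the shape--regular cell $T$, one gets $\delta_0<\diam(T)$, and then the interior cone property of $T$ makes $\{x:|x-x_0|<\delta_0\}\cap T$ have measure at least $\kappa\,\delta_0^d$ with $\kappa=\kappa(d,\cT_0)$. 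Hence $(r/2)^q\kappa\,\delta_0^d\le\|A-B\|_{L_q(T)}^q\le\veps^q$, so $\delta_0\lesssim(\veps/r)^{q/d}$ and $D^*\lesssim\frac{M}{\diam(T)}(\veps/r)^{q/d}$; multiplying by $|T|^{1/q}\sim\diam(T)^{d/q}$ and using $\diam(T)\gtrsim(\veps/(c_0M))^{q/d}$ once more to absorb the leftover power of $\diam(T)$ yields $D^*|T|^{1/q}\le\wt C\veps$ with $\wt C$ depending only on $d,m,\cT_0$ and $M/r$.

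The crux is this last estimate, which quantifies how deeply a polynomial matrix bounded by $CM$ can drive $\lambda_{\min}$ below $r/2$ while staying $L_q(T)$--close to a matrix whose eigenvalues are $\ge r$. Carrying it out forces one to couple Markov's inequality (it controls how fast $\lambda_{\min}(B)$ can move) with the non--degeneracy of shape--regular cells --- a lower bound for the measure of $\{x:|x-x_0|<\delta_0\}\cap T$ that must survive even when $x_0\in\partial T$, which is exactly what the interior cone condition provides --- and then to use the dichotomy in $\veps$ to keep the powers of $\diam(T)$ under control; the bookkeeping is a shade heavier for general $d$ and $q$ than in the representative case $d=2$, where $q\ge d$ makes the exponents line up cleanly.
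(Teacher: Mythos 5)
Your construction is genuinely different from the paper's, and most of it is sound: the dichotomy on $\veps$ versus $|T|^{1/q}$ with the mean value of $A$ as a fallback, the automatic bound $\|B\|_{L_\infty(T)}\le CM$ obtained from an inverse estimate (the paper instead replaces $B$ by $rI$ outright when its quadratic form exceeds $CM$ somewhere), and the single uniform shift $\wt B=B+D^*I$ all work, as does the localization around the minimizer $x_0$ combined with Weyl's inequality. The genuine gap is in the final exponent chase. You take the ball radius $\delta_0=D^*\diam(T)/(C_1M)$ proportional to the unknown depth $D^*$, which yields $\delta_0\lesssim(\veps/r)^{q/d}$ and hence $D^*|T|^{1/q}\lesssim M\,(\veps/r)^{q/d}\,\diam(T)^{d/q-1}$. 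When $q\ge d$ the exponent $d/q-1\le 0$ lets you insert the lower bound $\diam(T)\gtrsim(\veps/r)^{q/d}$ coming from the small-$\veps$ hypothesis, and the powers cancel to give $(M/r)\,\veps$, as you say. But when $q<d$ (e.g.\ $d=3$, $q=2$, which is precisely the regime the paper advertises for piecewise constant coefficients) the exponent $d/q-1$ is positive, the only available control on $\diam(T)$ is an upper bound by $\diam(\Omega)$, and you are left with $D^*|T|^{1/q}\lesssim M\,(\veps/r)^{q/d}$, which is not $O(\veps)$ as $\veps\to0$ since $q/d<1$. This is not mere bookkeeping; the choice of radius must change.

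The repair is exactly the paper's device: instead of a ball whose radius is proportional to $D^*$, use the ball of fixed radius $\rho\sim(r/M)\,\diam(T)$ around $x_0$, on which the Markov inequality forces $\lambda_{\min}(B)$ to increase by at most $r/4$; there $\|A-B\|\ge D^*+r/4$, the cone property gives measure $\gtrsim(r/M)^d|T|$, and so $(D^*+r/4)\bigl((r/M)^d|T|\bigr)^{1/q}\le\veps$, i.e.\ $D^*|T|^{1/q}\le(M/r)^{d/q}\veps$ directly, for every $d$ and every $q$, with no dichotomy needed for this step. With that one substitution your argument closes in full generality; as written it proves the proposition only for $q\ge d$, in particular for the paper's notational default $d=2$.
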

\begin{proof}  
Let us begin with  an inverse (or Bernstein) inequality:
there is a constant $C_1$, depending on $m, d$ and $\cT_0$, such
that for any polynomial $P$ of degree $\leq m$ in $d$ variables and any polyhedron $T$
which arises from newest vertex bisection, we have \cite{BS08}
\be
\label{Bern1}
\|\nabla P\|_{L_\infty(T)}\le C_1\|P\|_{L_\infty(T)}|T|^{-1/d}.
\ee

Now, given $A$ and the approximation $B$, let
$$
M_0:=\sup_{x\in T}\sup_{|y|=1}y^tB(x)y.
$$
\bnew{We first  show that the statement is true for $M_0> CM$, for a suitable
constant $C$.}  We leave the value of $C\ge 1$ open at this stage and
derive restrictions on $C$ as we proceed.    
If $M_0 > CM$, then there is a $y$ with $|y|=1$ and an $x_0\in T$ such
that $y^tB(x_0)y=M_0 > CM$.  We fix this $y$ and consider the function
$a(x):=y^tA(x)y$ and the polynomial $P(x):=y^tB(x)y$ of degree $m$.  Notice that
$\|P\|_{L_\infty(T)}=M_0$ and  
\be
\label{notice1}
\|a-P\|_{L_q(T)} \le \|A-B\|_{L_q(T)}\le \veps.
\ee
In view of \eqref{Bern1}, we have
$$
|P(x)-P(x_0)|\le C_1M_0|T|^{-1/d}|x-x_0|\le   M_0/2,\quad |x-x_0|\le |T|^{1/d}/(2C_1).
$$
Let $T_0$ be the set of $x\in T$ such that  $|x-x_0|\le
|T|^{1/d}/(2C_1)$. Then $P(x)\ge M_0/2$ on $T_0$  and hence
$|a(x)-P(x)|\ge M_0/4$ on $T_0$ provided $C\ge4$ because 
$0<a(x)\le M < M_0/C$. 
Since $T_0$  has measure $\ge c|T| $, with $c<1$ depending only on $d$ and $\cT_0$, we obtain 
\beqn
\label{pol3}
M_0 (c|T|)^{1/q}  \le 4\|a-P\|_{L_{q}(T)}\le 4\veps.
  \eeqn
 If we define $\wt B:=r I$ on $T$, with $I$ the identity,
then using \eref{pol3} we obtain
 \beqn
 \label{pol4}
\|A-\wt B\|_{L_{q}(T)}\le \big(M+r\big)|T|^{1/q}
\le 2M|T|^{1/q}\le \big(2 M_0/C \big)|T|^{1/q} \le  (M_0/4) (c|T|)^{1/q}
 \le \veps,
 \eeqn
  provided $C$ is chosen large enough so that $C^{-1}c^{-1/q}\le 1/8$.   
This implies $C\ge 4$ and fixes the value of $C$. Thus, we have satisfied the lemma in the case $M_0>CM$.
  
 \bnew{We now discuss the case $M_0\le CM$ and consider}
 \be
 \label{min}
 \mu:=\inf_{x\in T}\inf_{|y|=1}y^tB(x)y.
 \ee
 If $\mu \ge r /2$, we have nothing to prove. So, we assume that $\mu<r/2$ and fix $y_0$ with $|y_0|=1$ and $x_0\in T$  such that $P(x):=y_0^tB(x)y_0$ assumes the value $\mu$ at $x_0$.    We then have from \eqref{Bern1}
 \beqn
 \label{pol41}
 |P(x)-P(x_0)|\le C_1 M_0|T|^{-1/d}|x-x_0|\le C_1CM|T|^{-1/d}|x-x_0|,\quad x\in T.
 \eeqn
 Let $T_0$ be the set of $x\in T$ such that $|x-x_0|\leq r |T|^{1/d}/(4C_1CM)$.
 Notice that $|T_0| \geq cr^dC_1^{-d}C^{-d}M^{-d}|T|$ for some constant $c$ only
 depending on $d$ and $\cT_0$, and $|P(x)-P(x_0)|\le r/4$.
 It follows that $P(x)\le \mu+r/4$ on the subset  $T_0$.  Thus, for 
$a(x):=y_0^tA(x)y_0\ge r$, this   gives that $a(x)-P(x)\ge \frac{3r}{4}-\mu$, $x\in T_0$ and therefore 
 \beqn
 \label{pol421}
|T_0|^{1/q}\Big(\frac{3r}{4}-\mu\Big)\le \|a-P\|_{L_{q}(T)}\le \veps.
\eeqn
 We now define $\wt B=B+(\frac{3}{4}r-\mu) I$, so that 
 $$\bar y^t\bar B(x)y\ge \frac{3}{4}r,\quad |y|=1,\ x\in T,$$
  and 
 \begin{equation*}
 \|A-\wt B\|_{L_{q}(T)}\le
 \|A-B\|_{L_{q}(T)}+\Big(\frac{3}{4}r-\mu\Big)|T|^{1/q}\le \veps
 +\Big (\frac{3}{4}r-\mu\Big)|T_0|^{1/q}\big( |T|
 |T_0|^{-1}\big)^{1/q} \le \wt C\veps,
 \end{equation*}
where we have used  \eref{pol421} and the value of the measure of $T_0$.  
\end{proof}

\begin{remark}[form of $\wt B$]
\label{rem1}
\rm
In the setting of Proposition \ref {constlemma1}, the matrix $\wt B$ takes 
one of three forms: (i) $\wt B=B$, (ii) $\wt B= r I$, 
(iii) $\wt B=B+\alpha  I$, for some $\alpha>0$.   
To convert this recipe into a numerical procedure we need first
the approximation $B$ of $A$. We construct $B$ in \S \ref{SS:approxA}.
\end{remark}

We can now prove the following theorem which shows that there is no 
essential loss of global accuracy by requiring that the 
approximation $B$ to $A$ be uniformly positive definite.

\begin{theorem}[enforcing positive definiteness globally]
\label{T:approxA}
Let $A$ be a positive definite matrix valued function in $L_q(\Omega)$  whose eigenvalues are all in $[r,M]$,  for each $x\in \Omega$, with $M\ge 1$.  If $B\in\Sigma_n^m$ satisfies  
$$
\|A-B\|_{L_q(\Omega)}\le \veps,
$$
 then there is an $\wt B\in \Sigma_n^m$ whose eigenvalues are in
 $[r/2,CM]$ for all $x\in\Omega$ and satisfies
 \beqn
\label{pol2}
\|A-\wt B\|_{L_q(\Omega)}\le  \wt C\veps,
\eeqn
where   $C,\tilde C$ are  the constants of Proposition \ref{constlemma1}. 
\end{theorem}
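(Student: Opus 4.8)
The plan is to localize the construction: apply Proposition~\ref{constlemma1} on each cell of the partition underlying $B$, and then reassemble the local corrections into a single piecewise polynomial matrix, exploiting the fact that the $L_q$ norm decomposes cellwise.

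First I would fix a partition $\cT\in\fT_n$ to which $B$ is subordinate, so that on each $T\in\cT$ the restriction $B|_T$ is a matrix of polynomials of degree $\le m$ and $T$ is a polyhedral cell obtained from $\cT_0$ by newest vertex bisection. Setting $\veps_T:=\|A-B\|_{L_q(T)}$, we have
\[
\sum_{T\in\cT}\veps_T^q = \|A-B\|_{L_q(\Omega)}^q \le \veps^q \qquad (q<\infty),
\]
with the obvious modification $\max_{T}\veps_T\le\veps$ when $q=\infty$. The hypotheses on $A$ (symmetric, positive definite, eigenvalues in $[r,M]$, $M\ge1$) are inherited on each $T$, so Proposition~\ref{constlemma1} applies with tolerance $\veps_T$ and produces a matrix $\wt B_T$ whose entries are polynomials of degree $\le m$ on $T$, whose eigenvalues lie in $[r/2,CM]$, and which satisfies $\|A-\wt B_T\|_{L_q(T)}\le\wt C\veps_T$. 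Crucially, $C$ depends only on $d,m,\cT_0$ and $\wt C$ depends additionally only on $M/r$, so both constants are uniform over $T\in\cT$ and over $n$.

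Next I would set $\wt B:=\sum_{T\in\cT}\wt B_T\chi_T$. By construction $\wt B$ is a matrix of piecewise polynomials of degree $\le m$ subordinate to the \emph{same} partition $\cT$, hence $\wt B\in\Sigma_n^m$ with the same $n$ as $B$; and its eigenvalues lie in $[r/2,CM]$ at every $x\in\Omega$ because this holds on each cell. Summing the local estimates yields, for $q<\infty$,
\[
\|A-\wt B\|_{L_q(\Omega)}^q=\sum_{T\in\cT}\|A-\wt B_T\|_{L_q(T)}^q\le\wt C^q\sum_{T\in\cT}\veps_T^q\le\wt C^q\veps^q,
\]
and likewise with suprema when $q=\infty$; this is exactly \eqref{pol2}.

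Once Proposition~\ref{constlemma1} is available, the argument is essentially bookkeeping and I do not anticipate a genuine obstacle. The only points requiring (minor) attention are that the constants generated cellwise are genuinely independent of the cell and of $n$ --- guaranteed by Proposition~\ref{constlemma1} --- and that gluing the $\wt B_T$ does not enlarge the partition, so that $\wt B$ stays in $\Sigma_n^m$; both are immediate here since $\wt B$ lives on the same mesh as $B$. The substance of the result is already contained in the local statement.
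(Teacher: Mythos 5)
Your proposal is correct and follows essentially the same route as the paper: apply Proposition~\ref{constlemma1} cellwise on the partition underlying $B$, glue the local corrections $\wt B_T$ via $\wt B=\sum_T \wt B_T\chi_T$, and sum the local $L_q$ estimates using the uniformity of $C$ and $\wt C$ over cells. The paper's proof is just a terser version of this same argument.
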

\begin{proof}   Let $B=\sum_{T\in\cT} B_T\chi_T$ where each of the matrices
$B_T$ have polynomial entries of degree $\leq m$ and $\chi_T$ is the
characteristic function of $T$.  If  $\wt B_T$ is the matrix
from Proposition \eqref{constlemma1} applied to $A$ and $B_T$ on $T$, then   
$\wt B(x):=\sum_{T\in\cT}\wt B_T(x)\chi_T(x)$ has its eigenvalues in $[r/2,CM]$ for each $x\in\Omega$.
The error estimate \eref{pol2} follows from $\|A-\wt B_T\|_{L_
  q(T)}\le \wt C \|A- B_T\|_{L_ q(T)}$ for each $T\in\cT$.
\end{proof}

 If we define $\wt \sigma_n(A)_{L_q(\Omega)}$ in the same way as
$\sigma _n$, except that we require that the approximating matrices 
are positive definite, then Theorem \ref{T:approxA} implies 
$\tilde \sigma_n(A)_{L_q(\Omega)}\le \tilde
C\sigma_n(A)_{L_q(\Omega)}$ for all $n\ge 1$.

\subsubsection {Algorithms for approximating $A$}\label{SS:approxA}
 In view of Theorem \ref{T:approxA}, we now concentrate on 
approximating $A$ in $L_q(\Omega)$ without preserving positive definiteness.
Let us first observe that approximating $A$ by elements from $\Sigma_n^m$ is simply a matter of approximating its entries.  For any $d\times d$ matrix $B=(b_{i,j})$, we have
that its spectral norm does not exceed $ \sum_{i,j}|b_{i,j}|$ and for any $i,j$ it is at least as large as $|b_{i,j}|$.   Hence,
\be
\label{Dmatrix}
\max_{i,j}\|b_{i,j}\|_{L_q(\Omega)}\le \|B\|_{L_q(\Omega)}\le \sum_{i,j}\|b_{i,j}\|_{L_q(\Omega)}\le d^2 \max_{i,j}\|b_{i,j}\|_{L_q(\Omega)}.
\ee
It follows that approximating $A$ in $L_q(\Omega)$  by elements of $\Sigma_n^m$
is equivalent to approximating its entries  $a_{i,j}$ in $L_q(\Omega)$ by
piecewise polynomials of degree $\leq m$.   Moreover, note that 
$A\in \cM^s(\cT_0,L_q(\Omega))$ is equivalent to each of the entries 
$a_{i,j}$ being in $\cB^s(\cT_0,L_q(\Omega))$.

The analysis given above means that the construction of optimal algorithms for \COEFF follow from the construction
of optimal algorithms for functions in $L_q$ as discussed in \S \ref{SS:approxfunctions}.
If we are able to compute the local error $E(a_{ij},T)_{L_q}$  for each coefficient $a_{ij}$ and each  \bnew{element} $T$, then
we can construct a (near) class optimal algorithm $\COEFF$
for $\cM^s(\cT_0,L_q(\Omega))$.
Moreover, $\COEFF$ guarantees that the approximations are positive
definite and satisfy \eref{COEFF1}; see Proposition \ref{constlemma1}
and Remark \ref{rem1}.

\section{Numerical Experiments}\label{s:numerics}

We present two numerical experiments, computed with bilinear
elements within {\tt deal.II} \cite{BHK:07}, that explore the applicability
and limitations of our theory.
We use the quad-refinement strategy, as studied in 
\cite[Section 6]{BN:10}, instead of newest vertex bisections.
The initial partition $\cT_0$ of $\Omega$ is thus made of quadrilaterals
for dimension $d=2$ and refinements
of $\cT_0$ are performed using the quad-refinement strategy imposing at most one
hanging node per edge,  as implemented in {\tt deal.II} 
\cite{BHK:07}.  
We recall that our theory is valid as well for quadrilateral or hexahedral
subdivisions with limited
amount of hanging nodes \bnew{per edge}. We refer to \cite[Section 6]{BN:10} for details about
refinement rules and computational complexity. 

Before starting with the experiments we comment on the choice of the
Lebesgue exponent $q=\frac{2p}{p-2}$ needed for the approximation of $A$; see 
Section \ref{ss:coeff}. Since we do not know $p$ in general, the
question arises how to determine $q$ in practice. We exploit the fact
that both $A$ and its piecewise polynomial approximation $\hat A$ are
uniformly bounded in $L_\infty(\Omega)$, say by a constant $C_A$, 
to simply select $\hat A$ as
the best approximation in $L_2(\Omega)$, computed elementwise,
and employ the interpolation estimate
\begin{equation}\label{L2-best}
\| A-\hat A\|_{L_q(\Omega)} \leq (2C_A)^{2/p} \| A-\hat A\|_{L_2(\Omega)}^{2/q}, 
\qquad 2\leq q \leq \infty.
\end{equation}
The perturbation estimate \eqref{PT1} thus reduces to
$$
\|u-\hat u\|_{H_0^1(\Omega)}\le  \hat r^{-1}\|f-\hat
f\|_{H^{-1}(\Omega)}+ \frac{(2C_A)^{2/p}}{\hat r} \|\nabla u\|_{L_{p}(\Omega)} \|A-\hat
A\|_{L_2(\Omega)}^{2/q},
$$
thereby justifying a universal choice of $\hat A$ 
regardless of the values of $p$ and $q$. 
Notice, however, that we would need the sufficient condition
$A\in\mathcal M^{\frac{qs_A}{2}}(L_2(\Omega))$
for $A\in \mathcal M^{s_A}(L_q(\Omega))$ and $2\leq q <\infty$;
therefore this choice may not always preserve the decay rate of
$\|A-\hat A\|_{L_q(\Omega)}$. A practically important exception occurs
when $A$ is piecewise constant over a finite number of pieces 
with jumps across a Lipschitz curve, since then
$$
\|A-\hat A\|_{L_q(\Omega)} \approx 
\big|\lbrace x \in \Omega  : \ A(x) \ne \hat A(x) \rbrace \big|^{1/q}
\qquad\Rightarrow\qquad
\|A-\hat A\|_{L_q(\Omega)} \approx \|A -\hat A\|_{L_2(\Omega)}^{2/q}.
$$
This in turn guarantees no loss in the convergence rate for $A$.
In the subsequent
numerical experiments, $A$ is piecewise constant and we thus utilize
piecewise constant aproximation for both the diffusion matrix
$A$ and the right hand side $f$ by their meanvalues $\bar A$
and $\bar f$, which is consistent with bilinears for $u$. 
We point out that both $A$ and $\bar A$ share 
the same spectral bounds.

\subsection{Test 1: L-shaped Domain}
We first examine $\DISC$ with the best possible choice $q=2$.
We consider the L-shaped domain $\Omega = [-5,5]\times [-5,5] \setminus
[0,5]\times [0,5]$.
We use $(\rho,\delta)$ to denote the polar
coordinate from the origin $(0,0)$.
The diffusion tensor is taken to be $A=a \ I$, where $I$ is the
$2\times 2$ identity matrix,
 
$$
a(\rho)=\left\lbrace \begin{array}{ll}
1 & \text{if} \qquad \rho \leq  \rho_0,\\
\mu & \text{otherwise},
\end{array}\right.
$$
and $\rho_0=2\sqrt{2}$, $\mu=5$.
Define $v(\rho,\delta) := \rho^{2/3}\sin(2\delta /3)$ to be the standard solution on the
L-shaped.
The exact solution engineered to illustrate the performance of \DISC is the standard singular solution for the L-shaped
domain when $\rho \leq \rho_0$ and a linear extension in the radial direction when
$\rho>\rho_0$, \bnew{namely,}
$$
u(\rho,\delta) = \left\lbrace \begin{array}{ll}
v(\rho,\delta) & \text{if} \qquad \rho \leq  \rho_0,\\
v(\rho_0,\delta)+\frac 2 {3\mu} \rho_0^{1/3} \sin(2\delta/3)(\rho-\rho_0)  
& \text{otherwise}.
\end{array}\right.
$$
Notice that $f:=-\text{div}(A\nabla u) \in
L^2(\Omega)$ and the spectral bounds of $A$ and $\bar A$ are $r=1,M=5$,
by construction.

We emphasize that the discontinuity of $A$ is never
matched by the partitions. 
Figure \ref{f:mesh} depicts the sequence of the partitions generated 
by the algorithm \DISC   implemented within {\tt deal.II}.
\begin{figure}[ht!]
\centerline{
\includegraphics[width=0.3\textwidth]{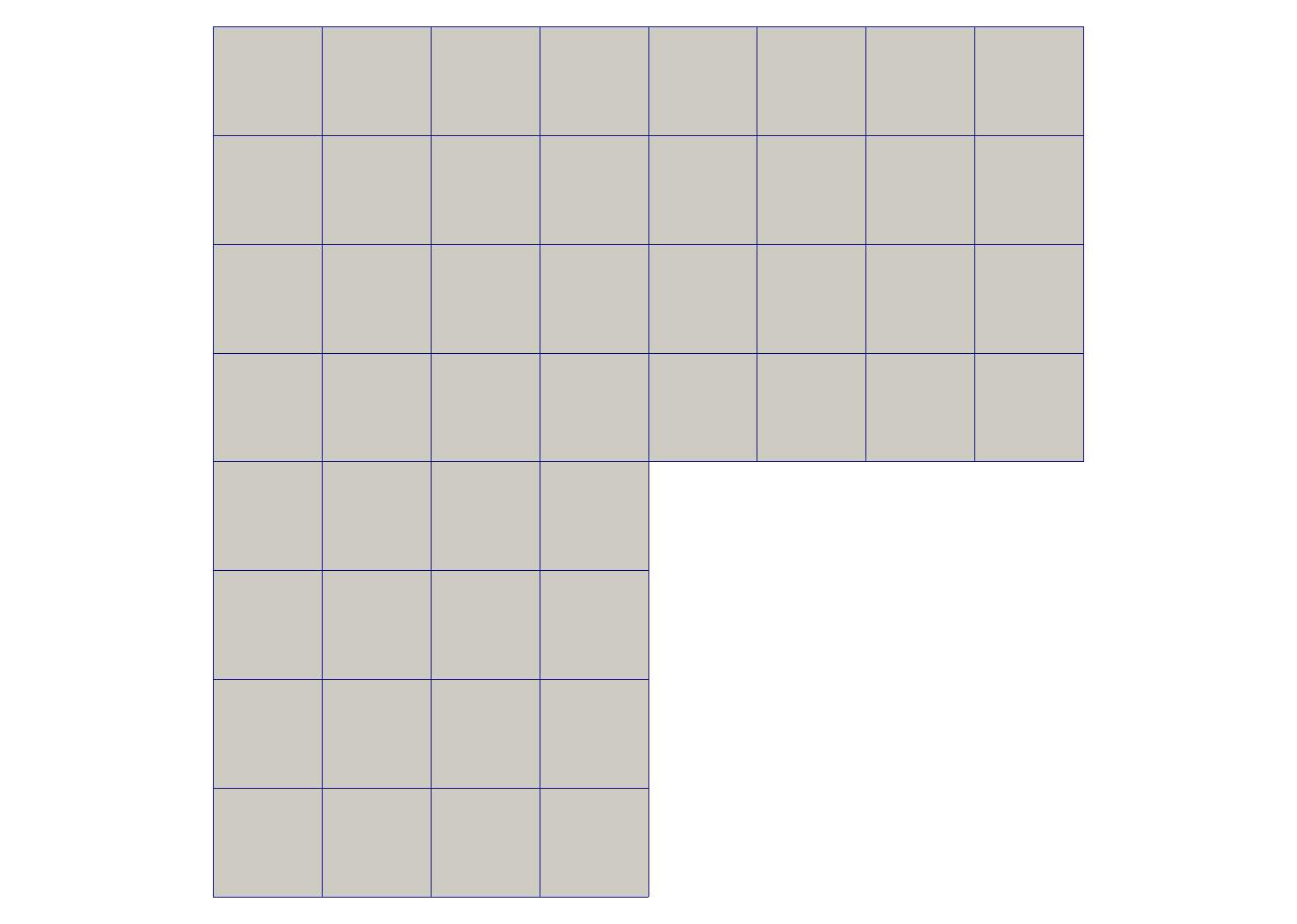}
\includegraphics[width=0.3\textwidth]{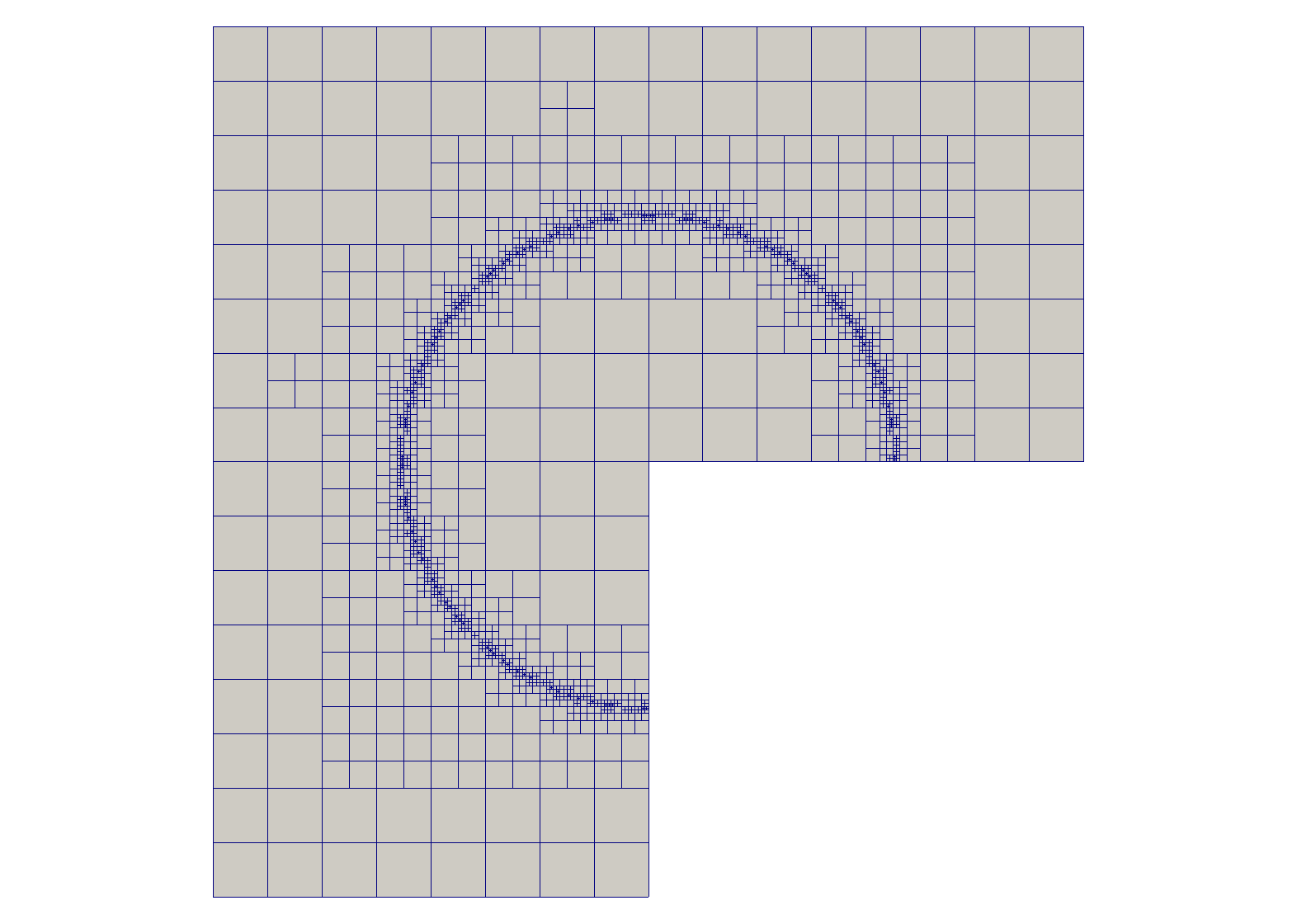}
\includegraphics[width=0.3\textwidth]{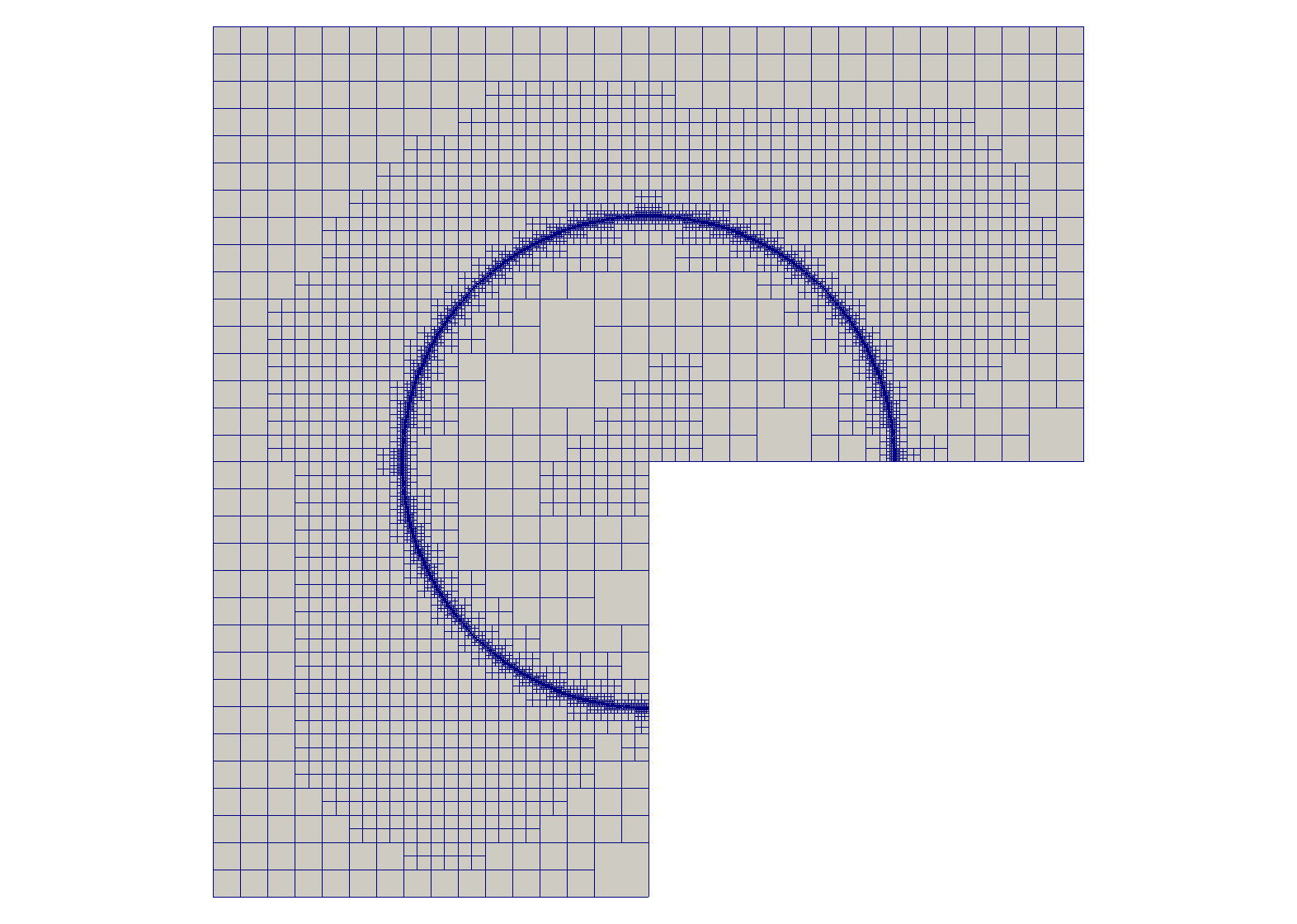}
}
\centerline{
\includegraphics[width=0.3\textwidth]{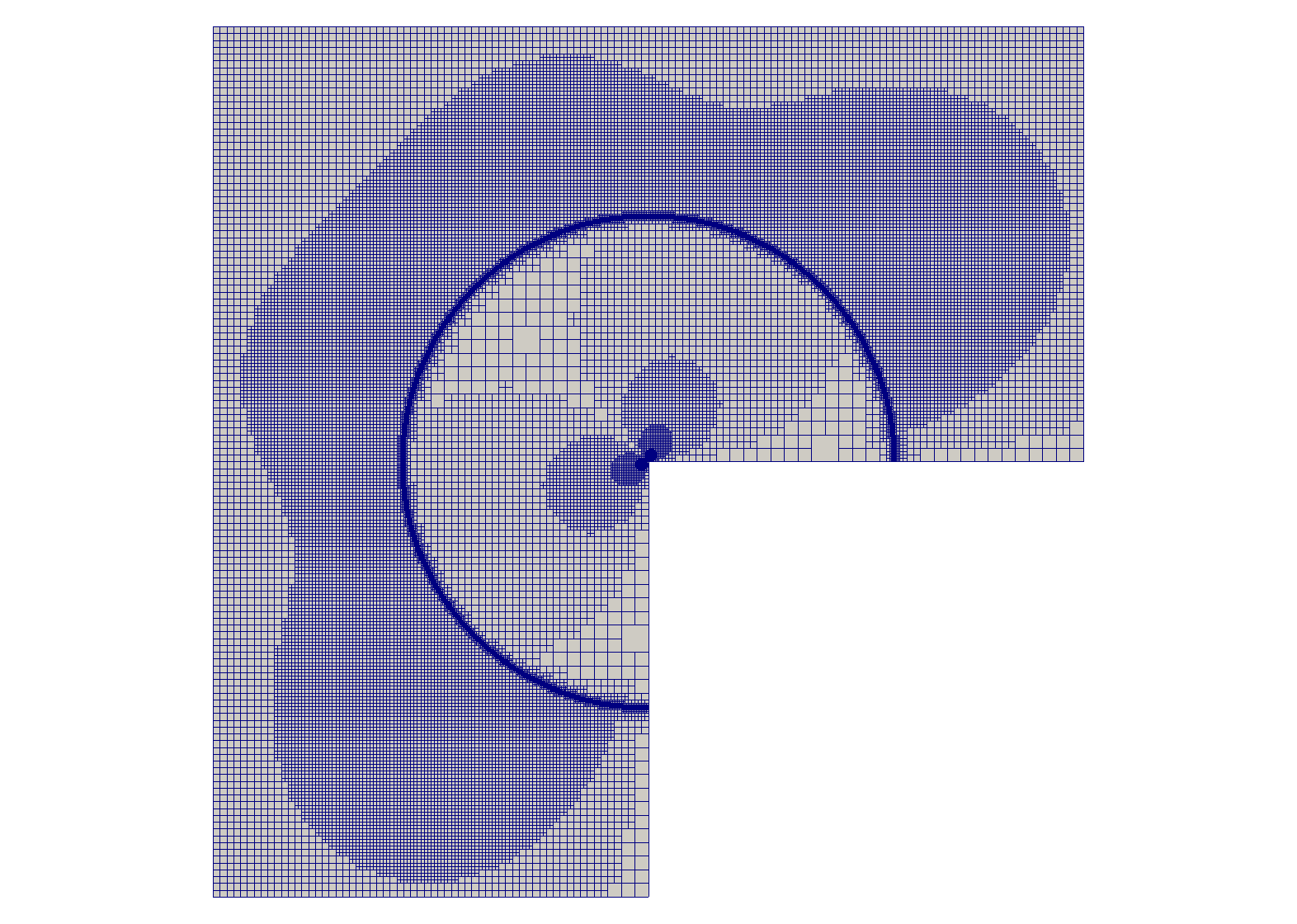}
\includegraphics[width=0.3\textwidth]{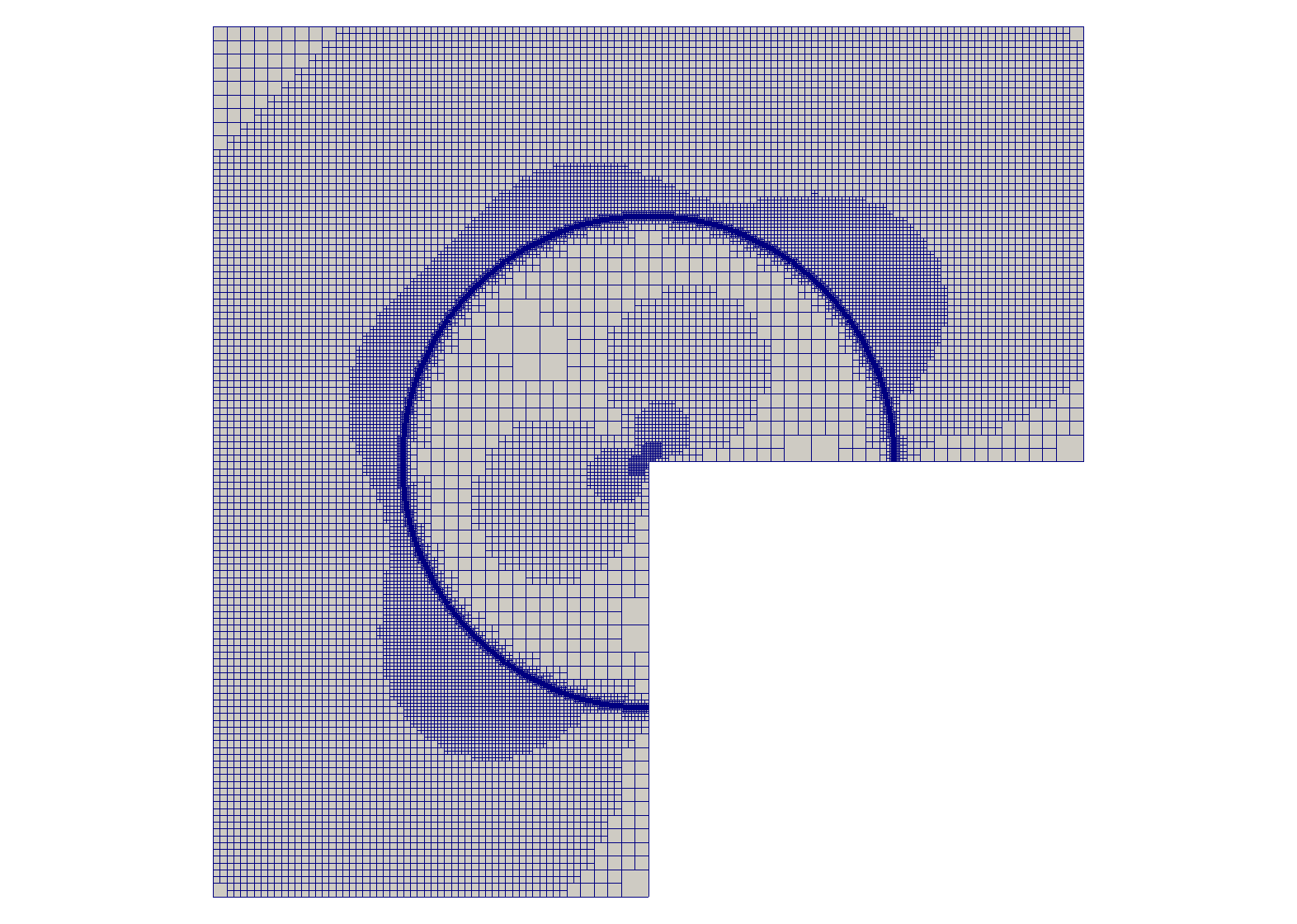}
\includegraphics[width=0.3\textwidth]{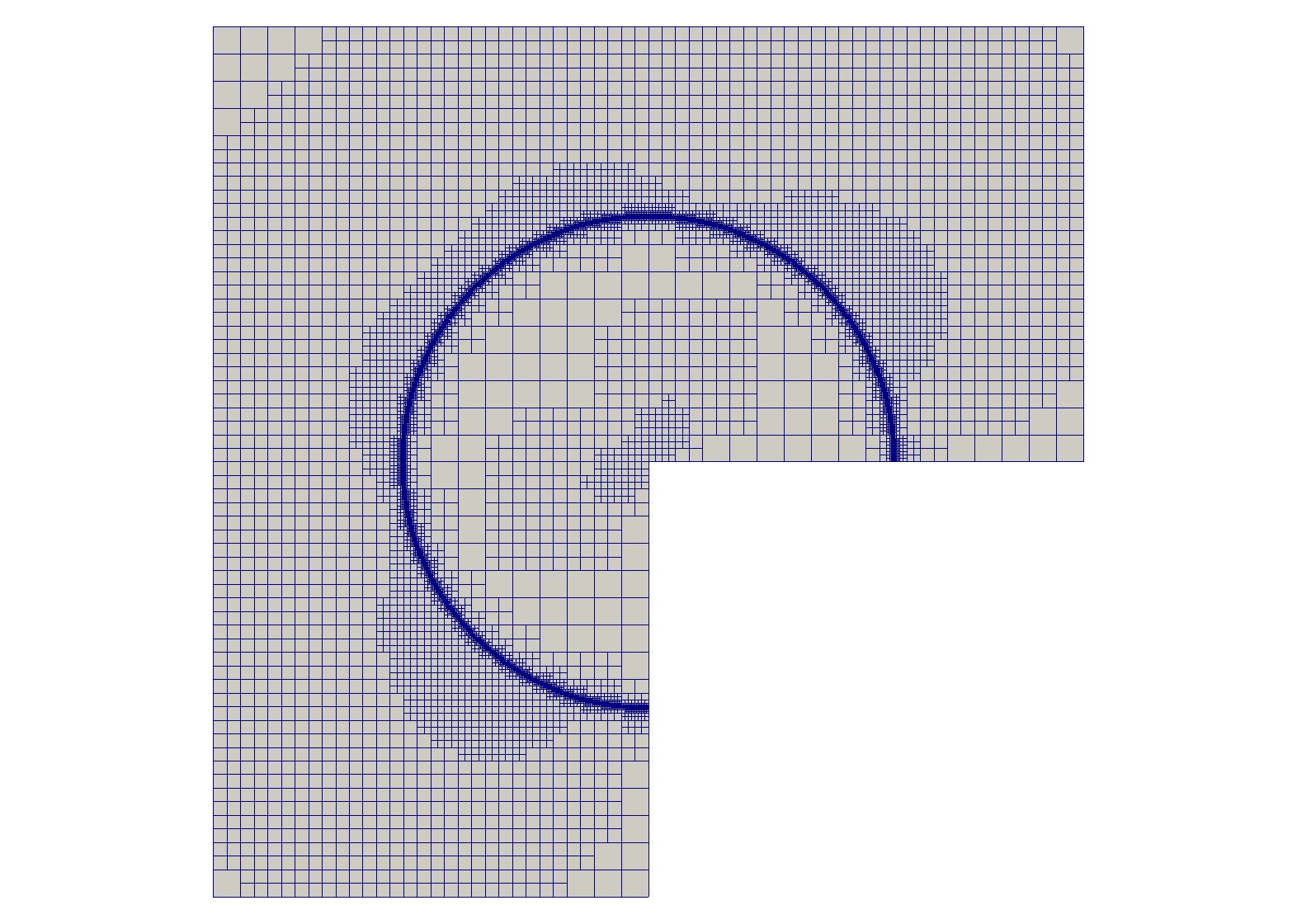}
}
\caption{\small{
Test 1 (L-shaped domain):
Sequence of partitions (clockwise) generated
  by the algorithm \bnew{$\DISC$} with  integrability index $q=2$ for $A$, 
and parameters $\beta=0.7,\omega=0.8,\veps_0=2$.} 
The initial partition (top left) is made of \bnew{uniform quadrilaterals
without} hanging nodes and 
all the \bnew{subsequent} partitions have at most one hanging node per side.
The algorithm \DISC refines at early stages only to capture the jump in the
diffusion. 
The refinements caused by the singular behavior of the solution 
 at the origin appear later in the adaptive process.}
\label{f:mesh}
\end{figure}

The parameters are chosen to be $\beta=0.7$, $\omega=0.8$ and $\veps_0=2$.
The standard AFEM loop in \MAIN is driven by error residual estimators 
together
with a D\"orfler \bnew{marking strategy  \cite{Dor96}} with parameter $\theta
= 0.3$, which is rather conservative.

We now discuss the choice of $p$ for which \CONDP is valid in view of 
Remark \ref{r:local}.
Let $\Omega=\Omega_1 \cup \Omega_2$ where $\Omega_1:= \{ (\rho,\delta)
\in \Omega : \rho \leq \rho_0/ 2 \}$
and $\Omega_2:= \Omega\backslash\Omega_1$.
The solution $u\in W^{1}(L_p(\Omega_1))$ for any $p<6$
in $\Omega_1$ and $u$ is Lipschitz in $\Omega_2$,
whence \CONDP is valid for $p<6$ in $\Omega_1$, i.e any
$q=2p/(p-2)>3$, and $p=\infty$ in $\Omega_2$, i.e. any $q\ge 2$.
Since the diffusion coefficient is constant on $\Omega_1$, it leads to
zero approximation error of $A$ and we only have to handle the 
jump of $A$ across the circular line $\{ \rho=\rho_0\}$ on $\Omega_2$.

Such a jump is never captured by the
partitions, thereby making $A$ never piecewise smooth over partitions 
of $\cT_0$ and preventing the use of a standard AFEM.
It is easy to check that for any $1\le q<\infty$, the matrix  $A$ is in $\cM^{1/q}(\mathcal T_0,L_q(\Omega))$.
Since the performance of $\DISC$ is reduced for larger $q$, according to \eqref{mt},
we should choose the smallest $q=2p/(p-1)$ compatible with $u\in
W^1(L_\infty(\Omega_2))$, namely $q=2$ for $p=\infty$.
 The right hand side $f$ satisfies $f \in
\cB^{1/2}(\cT_0,L_2(\Omega)) \subset \cB^{1/2}(\cT_0,H^{-1}(\Omega))$,
whereas the solution
$u \in \cA^{1/2}(\cT_0,H^1_0(\Omega))$ because $u\in \cA^{1/2}(\cT_0,H^1(\Omega_i))$,
$i=1,2$, and $\nabla u$ jumps over a Lipschitz curve \cite{CDN,DD:97}.

To test our theory, we take four different values of $p$ and thus 
the corresponding $q$ in our numerical experiments. For each of these 
different choices, Figure \ref{f:rate} (left) shows the decay of the  
energy error versus the number of degree of freedom in a $log-log$ scale. 
The experimental orders of convergence are
$$
-0.19   \quad\text{for} \quad q=6,
\qquad
-0.23  \quad\text{for} \quad q=5,
\qquad
-0.35  \quad\text{for} \quad q=3,
\qquad
-0.48   \quad\text{for} \quad q=2,
$$
in agreement with the approximability of $A$ stated above.
These computational rates are close to the expected values $-1/q$, and
reveal the importance of approximating and evaluating $A$ within
subdomains with the smallest Lebesgue exponent $q$ possible. In this
example $q=2$ yields an optimal rate of convergence for piecewise
bilinear elements.
Figure \ref{f:rate} (right) depicts the Galerkin solution after $6$ iterations
of $\DISC$ for $q=2$.
We finally point out that $\DISC$ with $q=\infty$,  
namely with $A$ being approximated in
$L_\infty(\Omega)$, cannot reduce the pointwise error in $A$ beyond
$3.96$ computationally which is consistent with the jump of $A$.
As a consequence,  any call of \COEFF with any smaller target
tolerance and $q=\infty$ does not converge.

\begin{figure}[ht!]
\centerline{\includegraphics[width=0.45\textwidth]{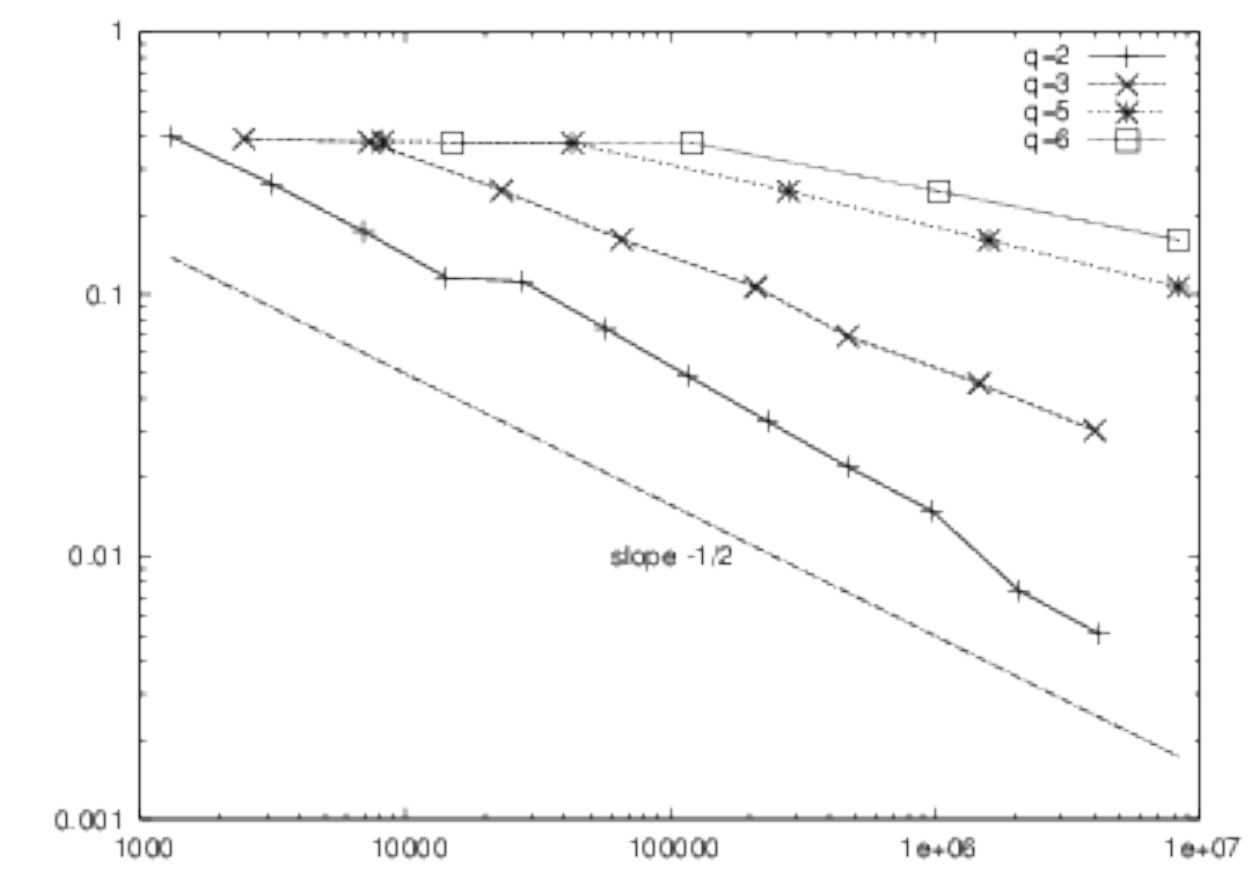}\includegraphics[width=0.45\textwidth]{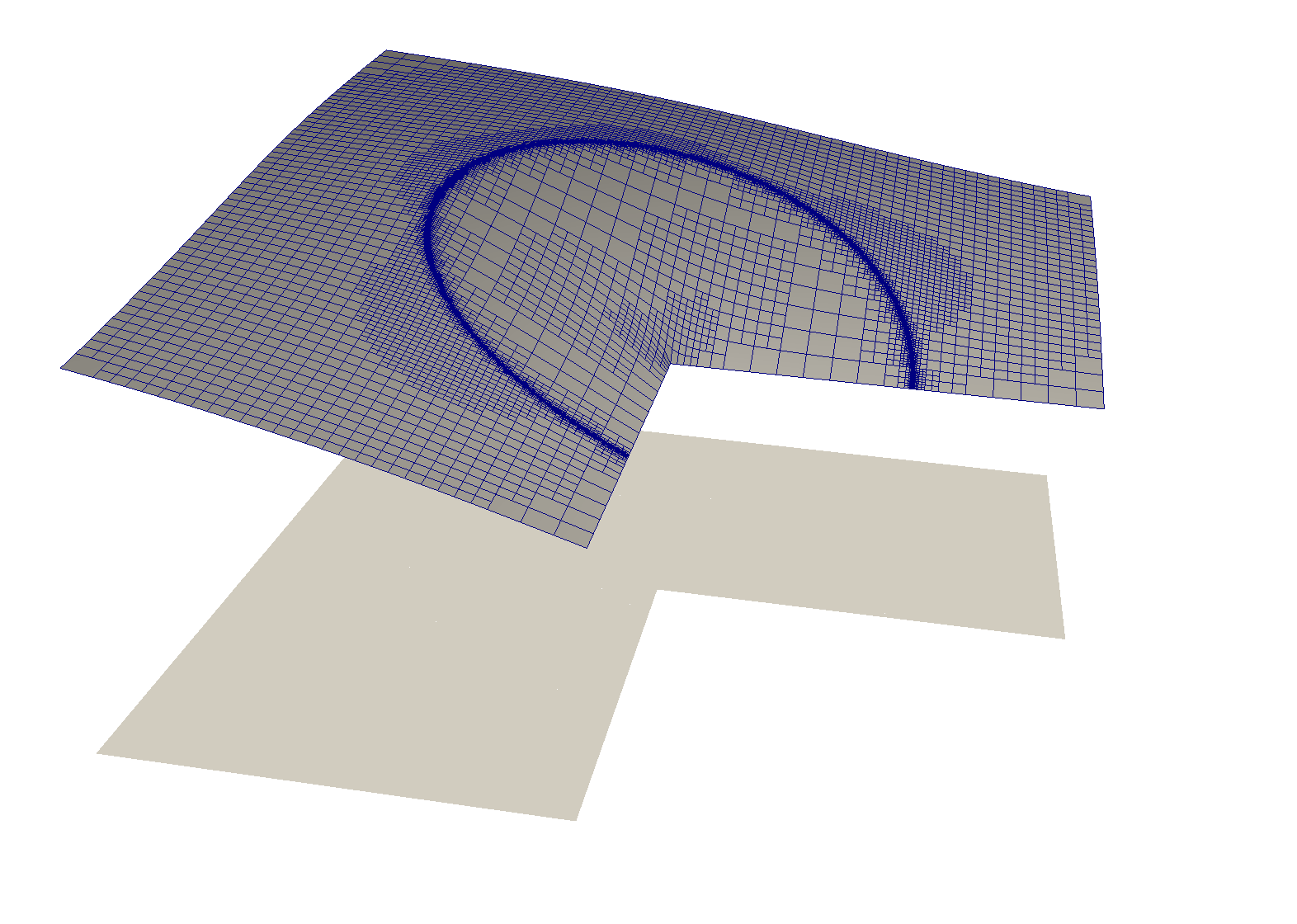}}
\caption{\small
Test 1 (L-shaped domain):
(Left) Energy error versus number of degrees of freedom for
values of $q=2,3,5,6$. The optimal rate of convergence is recovered for $q=2$.
(Right) The Galerkin solution together with the underlying partition
after $6$ iterations of the algorithm \DISC with $q=2$. 
The discontinuity of $A$  is never captured by the partitions and
the singularities of both $A$ and $\nabla u$ drive the refinements.} 
\label{f:rate}
\end{figure}

\subsection{Test 2: Checkerboard}\label{S:chekerboard}
We now examine $\DISC$ with an example which does not allow for $q=2$.
In this explicit example, originally suggested by Kellogg \cite{Kellogg41},
the line discontinuity of the diffusion matrix $A$ meets the
singularity of the solution $u$.
Let $\Omega = (-1,1)^2$, $A=a I$, where $I$ is the $2\times 2$
identity matrices and
$$
a(x,y) = \left\lbrace \begin{array}{ll}
b &\qquad  \mathrm{when} \quad (x-\frac{\sqrt{2}}{10})(y-\frac{\sqrt{2}}{10})\geq 0\\
1 &\qquad \mathrm{otherwise,}
\end{array}\right.
$$
with $b>0$ given.
The \bnew{forcing} is chosen to be $f\equiv 0$ so that with
appropriate boundary conditions, the solution $u$ in polar coordinates
$(\rho,\delta)$ centered at the point $(\frac{\sqrt{2}}{10},\frac{\sqrt{2}}{10})$ reads
$$
u(\rho,\delta)= \rho^\alpha \mu(\delta).
$$
where  $0<\alpha<2$ and
$$
\mu(\delta):= \left\lbrace 
\begin{array}{ll}
\cos((\frac \pi 2-\sigma)\alpha)\cos((\delta-\frac \pi 4)\alpha) & \qquad \mathrm{when} \quad 0\leq \delta <\frac \pi 2,\\
\cos(\frac \pi 4 \alpha )\cos((\delta-\pi+\sigma)\alpha) & \qquad \mathrm{when} \quad \frac \pi 2 \leq \delta <\pi,\\
\cos(\alpha \sigma)\cos((\delta- \frac{5\pi} 4)\alpha) & \qquad \mathrm{when} \quad \pi \leq \delta < \frac{3\pi} 2,\\
\cos(\frac{\pi} 4 \alpha)\cos((\delta-\frac{3\pi} 2-\sigma)\alpha) & \qquad \mathrm{when} \quad \frac{3\pi} 2\leq \delta < 2\pi. 
\end{array}
\right.
$$
The parameters $b$, $\alpha$ and $\sigma$ satisfy the non linear relations
\begin{equation*}
b = -\tan((\frac \pi 2-\sigma)\alpha)\cot(\frac \pi 4 \alpha ), \quad \frac 1 b  = -\tan(\frac \pi 4 \alpha)\cot(\sigma \alpha), \quad b = -\tan(\alpha \sigma)\cot(\frac \pi 4\alpha)
\end{equation*}
together with the constraints
$$
\max(0,\pi (\alpha-1)) < \frac \pi 2  \alpha  < \min(\pi \alpha, \pi), \qquad \max(0,\pi(1-\alpha))<-2\alpha \sigma < \min(\pi,\pi(2-\alpha)).
$$
We stress that the singular solution $u\in H^{1+s}(\Omega)$,
  $s<\alpha$, yet \bnew{$u\in\mathcal A^{1/2}(H^1_0(\Omega))$} \cite{NSV:09}. However,
the discontinuity of $A$ meets the singularity of $u$ and Remark
\ref{r:local} does no longer apply. In this case we have
$p<2/(1-\alpha)$ and $s_A=1/q=(p-2)/2p<\alpha/2$.

We challenge the algorithm \DISC with the approximate parameters
\begin{equation}\label{p:set1}
\alpha = 0.25, \quad b\approx 25.27414236908818, \quad \sigma \approx -5.49778714378214,
\end{equation}
which correspond to $p<8/3$ and $s_A<1/8$. We exploit \eqref{L2-best}
and choose $\bar A$ to be the meanvalue of $A$ element-by-element.
We report the experimental order of convergence (EOC)
of the energy error against the number of degrees
of freedom in Fig. \ref{f:kellogg} together with the solution at the final stage.
The asymptotic EOC (averaging the last 6 points)
is $-0.47$, which is about optimal and much better than the expected value
$s_A \approx -0.125$. On the other hand, the preasymptotic EOC
(without the last 6 points) is about $-0.2$. 
We will give a heuristic explanation of this
superconvergence rate in the following subsection.
We now conclude with Fig. \ref{f:kelloggs_mesh} which depicts
quadrilateral partitions at stages $k=0, 7, 22$.

\begin{figure}[ht!]
\centerline{\includegraphics[width=0.45\textwidth]{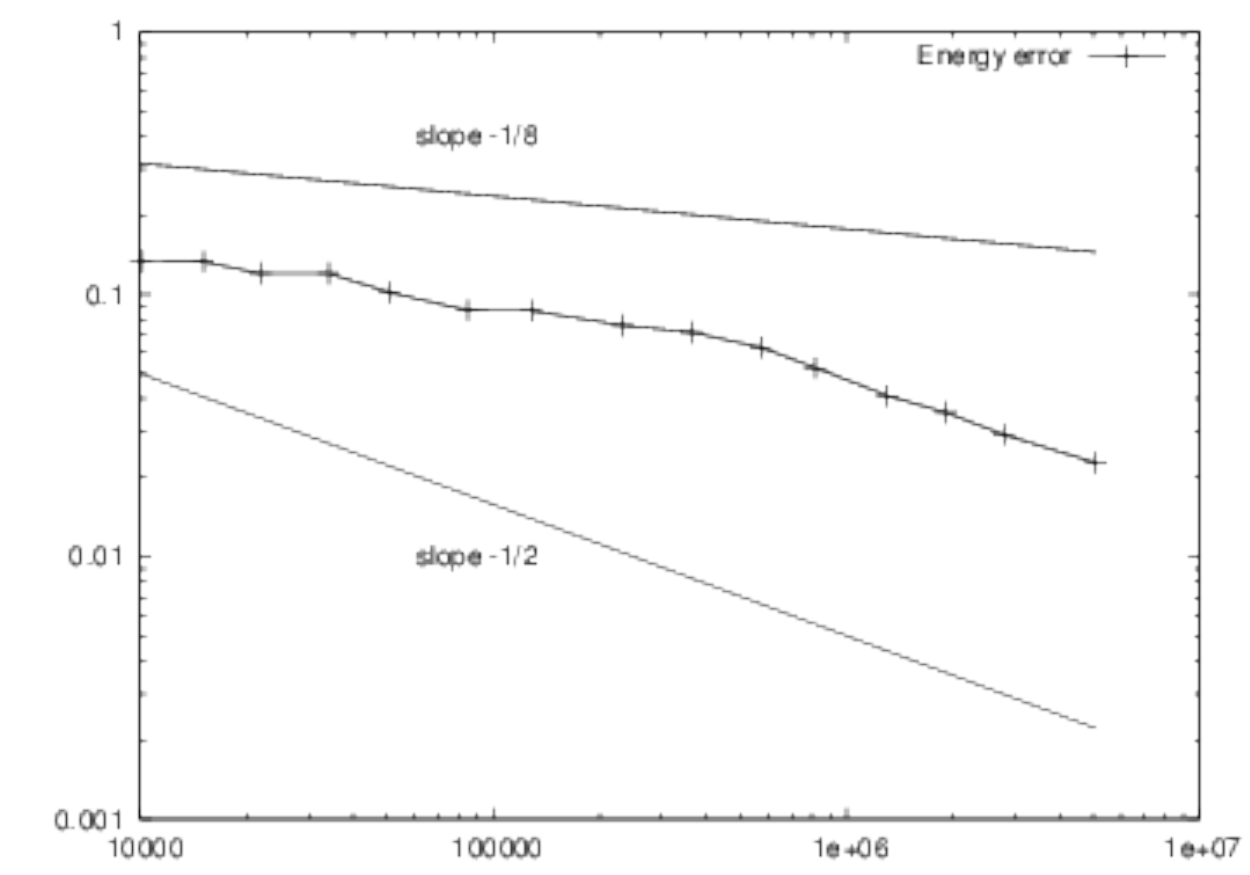}\includegraphics[width=0.45\textwidth]{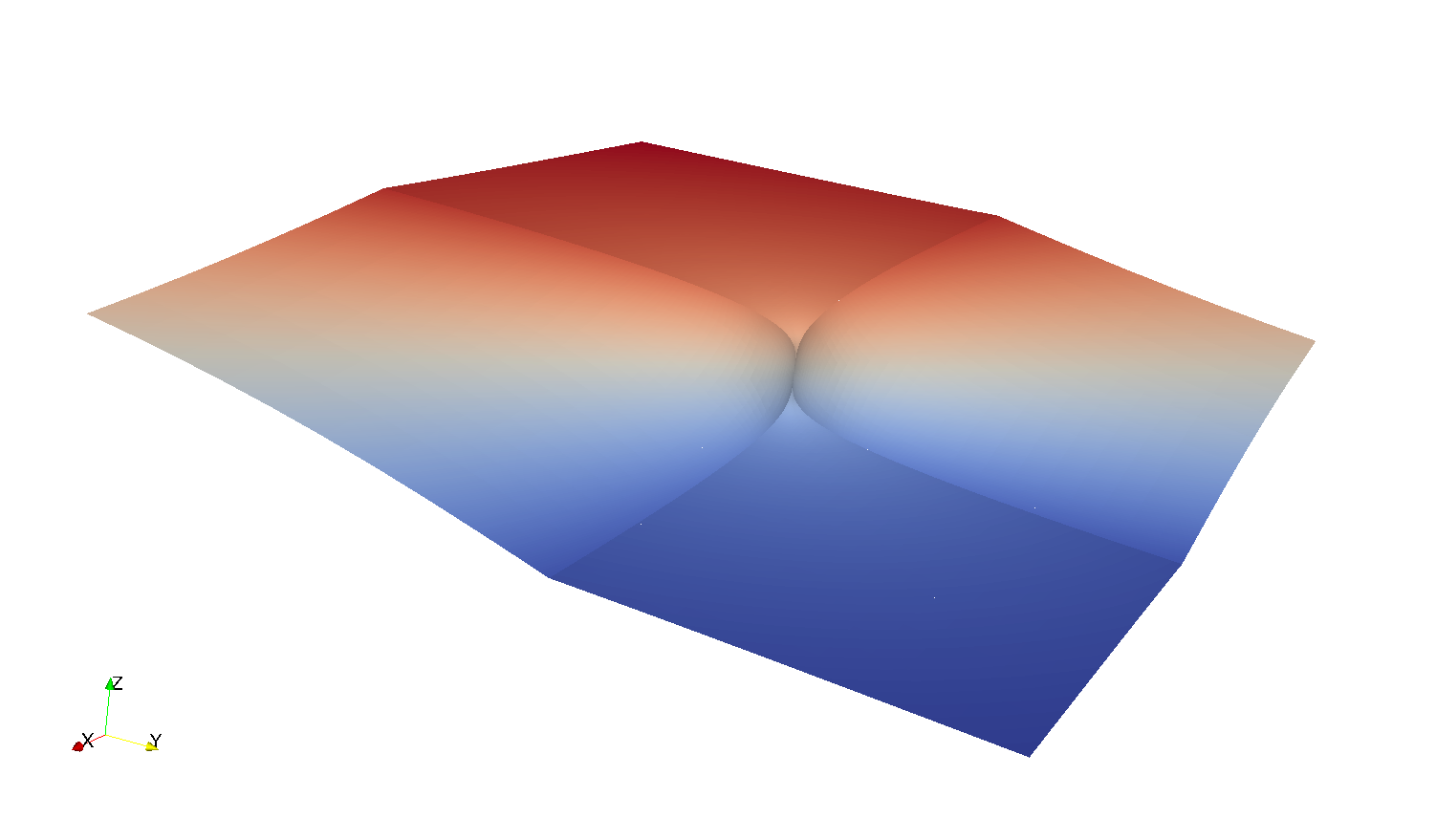}}
\caption{\small
Test 2 (Checkerboard): The parameters are chosen so that the solution $u\in H^{1+s}(\Omega)$, $s<0.25$. (Left) Energy error versus number of degrees of freedom.
The optimal rate of convergence $\approx -0.5$ is recovered.
(Right) The Galerkin solution together with the underlying partition
after $6$ iterations of the algorithm \DISC. 
The discontinuity of $A$  is never captured by the partitions and
the singularities of both $A$ and $\nabla u$ drive the refinements.
} 
\label{f:kellogg}
\end{figure}

\begin{figure}[ht!]
\centerline{\includegraphics[width=0.3\textwidth]{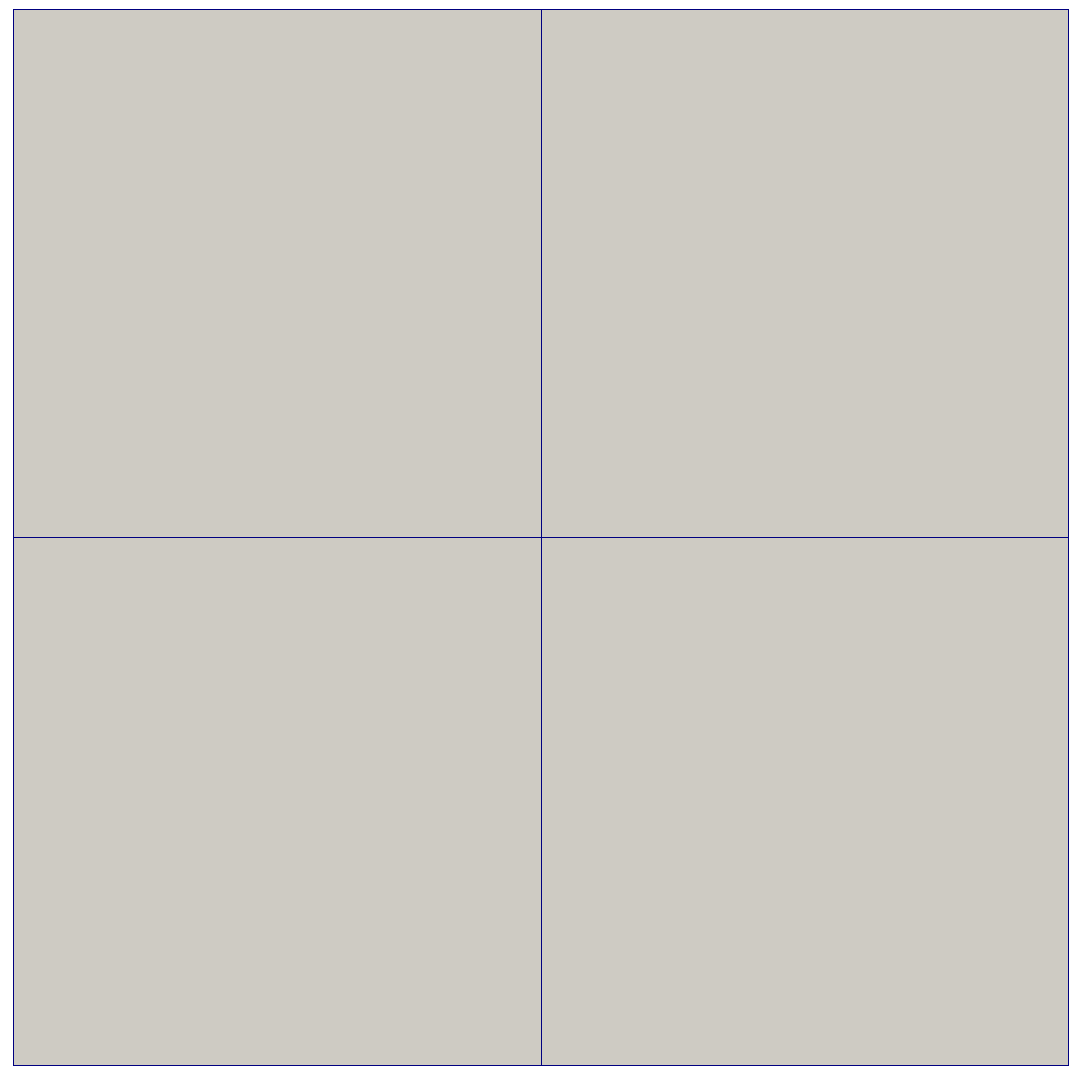}
\includegraphics[width=0.3\textwidth]{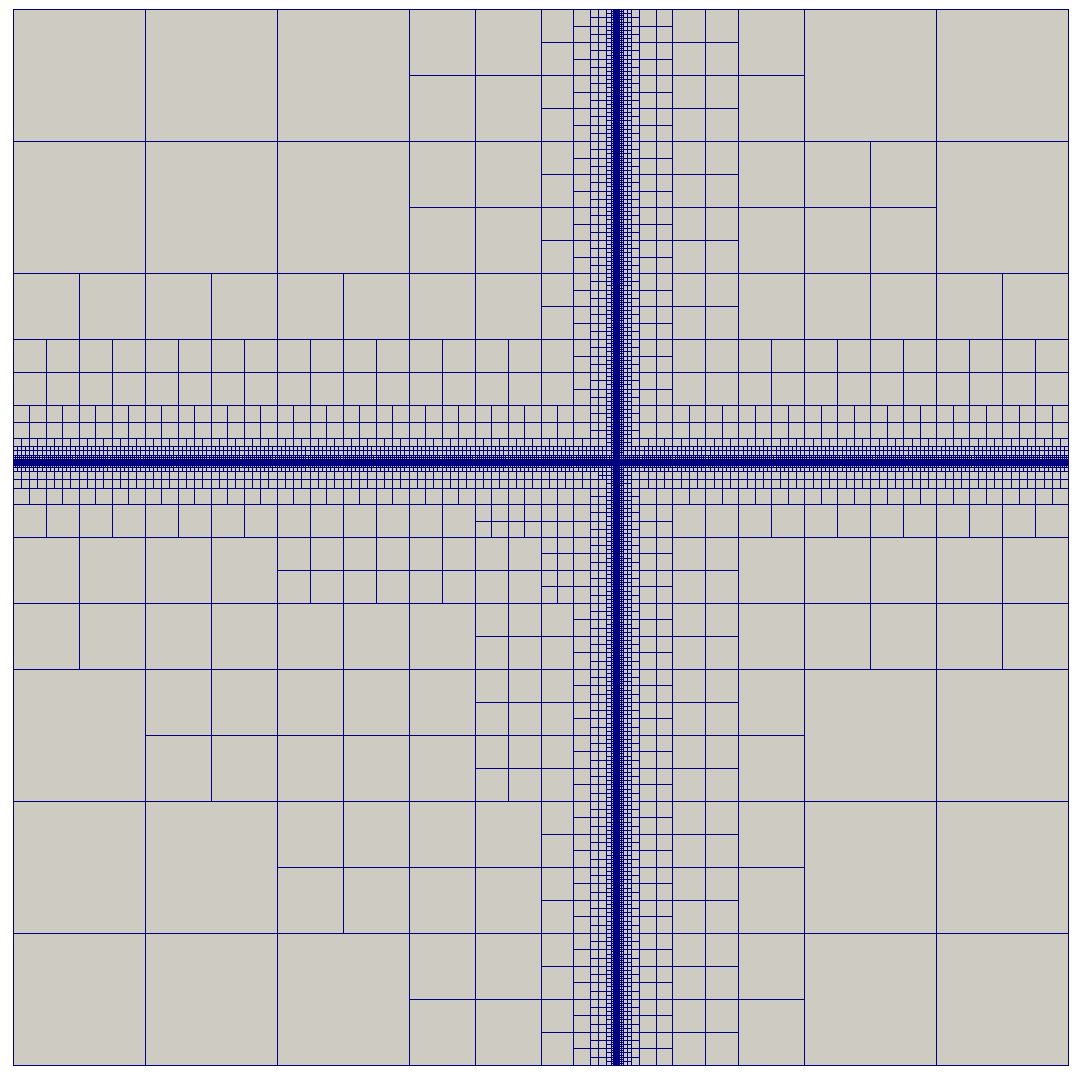}
\includegraphics[width=0.3\textwidth]{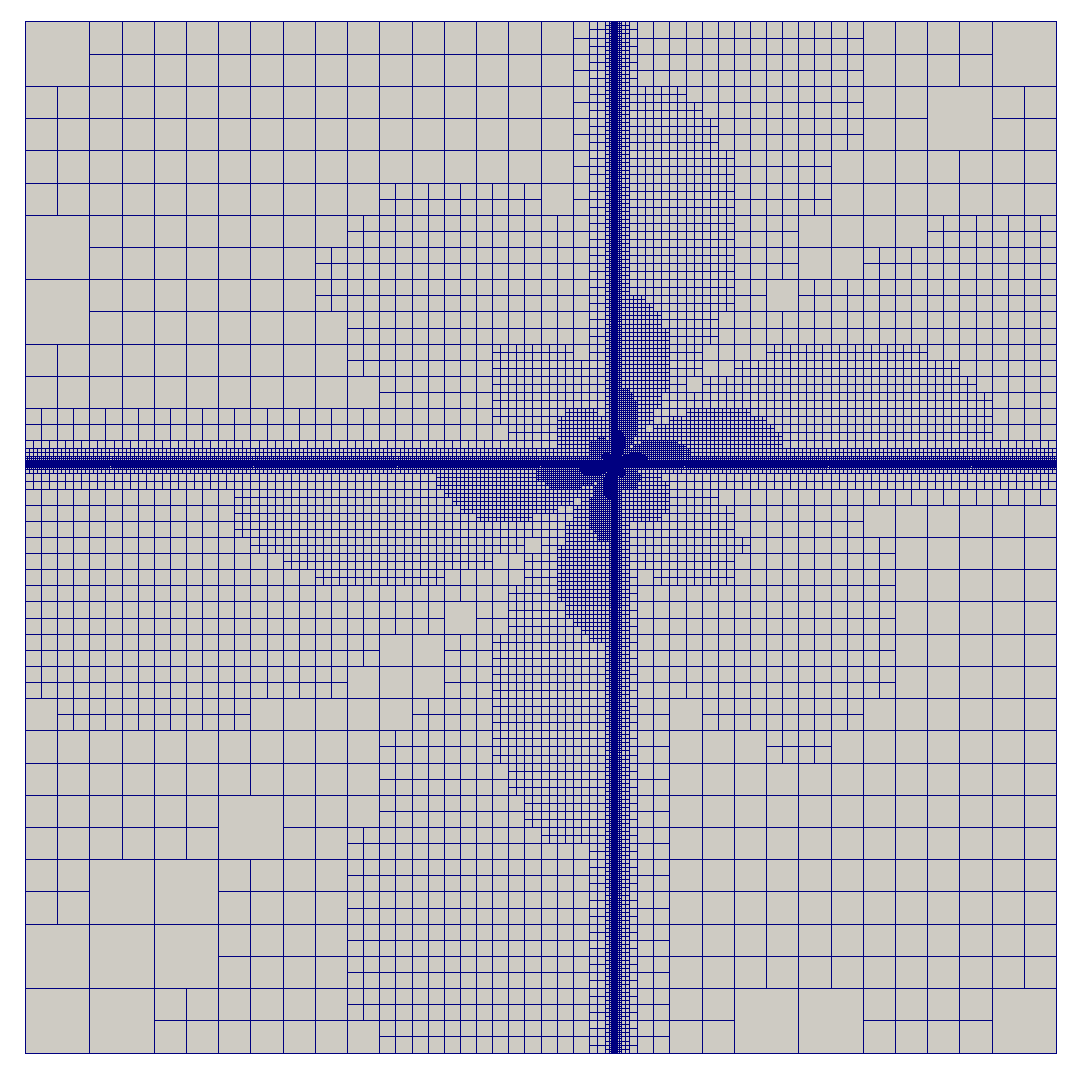}}
\caption{\small
Test 2 (Checkerboard):
Sequence of partitions (from left to right) generated by \DISC with
$\omega=0.8$.
The initial partition (first) is made of four quadrilaterals, 
The algorithm refines at early stages only to capture the discontinuity in the diffusion coefficient (second).
Later the singularity of $u$ comes into play and, together with \bnew{that of
$A$}, drives the refinement (third).
The corresponding subdivision consists of $5$ million degrees of freedom.
The smallest cell has a diameter of $2^{-8}$ which illustrates
the strongly graded mesh constructed by \DISC.
} 
\label{f:kelloggs_mesh}
\end{figure}

\subsection{Performance of $\DISC$ with Interacting Jump and Corner Singularities}

We finally give a heuristic explanation to the surprising
superconvergence behavior of $\DISC$ in Test 2.
Let $u\approx \rho^\alpha$, with $0<\alpha<1$, be the prototype solution 
such as that of Section \ref{S:chekerboard}. Let $A$ be a discontinuous 
diffusion matrix with discontinuity across a Lipschitz curve $\Gamma$ emanating
from the origin, and let $\bar A$ be its \bnew{local} meanvalue.

Let $\omega_j$ be the annulus $\{x\in\Omega: 2^{-(j+1)} < \rho=|x| < 2^{-j}\}$ for
$0\le j \le J$ and set $\omega_{J+1}:= \{x\in\Omega: |x| < 2^{-(J+1)}\}$.
We assume that $c^{-1} h_j^2\leq  |T| \leq c h_j^2$ for each element $T$ within $\omega_j$ touching $\Gamma$, where $c$ is a constant independent of $j$ and the total number of cells $N$.
\bnew{Revisiting the proof of the perturbation theorem (Theorem
  \ref{T:perturbation}), we realize that the error \bnew{$E_A$} 
due to the approximation of $A$ can be decomposed as follows:}
$$
E_A^2:= \| (A-\bar A)\nabla u\|_{L_2(\Omega)}^2 \bnew{\approx} \sum_{j=0}^{J+1} \|
(A-\bar A)\nabla u\|_{L_2(\omega_j)}^2 = \sum_{j=0}^{J+1} \delta_j^2,
\qquad \delta_j :=  \| (A-\bar A)\nabla u\|_{L_2(\omega_j)}.
$$
We choose $q=2$ and $p=\infty$ away from the origin which implies
that the contribution $\delta_j$ within $\omega_j$ is estimated by
\begin{equation*}
\delta_j \leq \|A-\bar A\|_{L_2(\omega_j)} \|\nabla u\|_{L_\infty(\omega_j)}, \quad 0\leq j \leq J.
\end{equation*}
The first term is simply the square root of the area around the interface and within
$\omega_j$, which amounts to
$
\|A-\bar A\|_{L_2(\omega_j)} \approx \big( h_j 2^{-j} \big)^{1/2}
\approx \big( N_j^{-1} 2^{-2j} \big)^{1/2},
$
with  $N_j\approx h_j^{-1} 2^{-j}$ being the number of elements touching $\Gamma$ within $\omega_j$.
The second term reduces to
$
\|\nabla u\|_{L^\infty(\omega_j)} \approx 2^{-j(\alpha-1)}, 
$
whence
\begin{equation*}
\delta_j \approx N_j^{-1/2} 2^{-j\alpha}.
\end{equation*}
We further assume error equidistribution, which
entails $\delta_j^2 \approx \Lambda$ constant independent of $j$. This
implies
$$
N_j \approx \Lambda^{-1} 2^{-2j\alpha}
\qquad \Rightarrow \qquad
N \approx \sum_{j=0}^J N_j  \approx \Lambda^{-1} \sum_{j=0}^J 2^{-2j\alpha} \approx  \Lambda^{-1} 
$$
because $\alpha>0$. 
It remains to determine the value of $J$. 
On $\omega_{J+1}$ we have $\nabla u \in L_p$, $p<\frac{2}{1-\alpha}$, so that with $q=\frac{2p}{p-2}$, the contribution from $\omega_{J+1}$ is estimated by
$$
\delta_{J+1}^2 \leq \| A-\bar A \|^2_{L_q(\omega_{J+1})} \| \nabla
u\|^2_{L_p(\omega_{J+1})}  \preceq | \omega_{J+1}|^{2/q}
\preceq 2^{-\frac{4J}{q}}
$$
with a hidden constant that blows up as $q$ approaches the limiting
value $2/\alpha$.
Matching the error $\delta_{J+1}^2$ 
with $\Lambda$ gives rise to the relation
$$
N \approx 2^{4J/q}
\qquad\Rightarrow\qquad
J \approx \log N.
$$
We thus conclude that
$$
E_A \approx  \big(J/N\big)^{1/2} \approx N^{-1/2} |\log N|^{1/2}.
$$

The ensuing mesh has a graded meshsize $h_j\approx N^{-1}
2^{-j(1-2\alpha)}$ towards the origin provided $\alpha<1/2$; if
$\alpha\ge 1/2$ then uniform refinement suffices. Such a graded mesh cannot
result from the application of $\COEFF$ because it only measures the jump
discontinuity of $A$ which is independent of the distance to the
origin. 

However, the refinement due to $\MAIN$ could be much more
severe because the best bilinear approximation $\hat U$ of $u$ on $T \subset \omega_j$ reads
$$
E_u(T)^2:= \| u- \hat U\|_{H^1(T)}^2  \approx h_j^2 2^{2j(2-\alpha)}
$$
and equidistribution $E_u(T)\approx\lambda$ (constant) yields a graded
meshsize $h_j \approx \lambda 2^{-j(2-\alpha)}$. This grading is
stronger than that due to $A$ and asymptotically dominates. This in
turn explains the preasymptotic EOC of Fig. \ref{f:kellogg} and the
quasi-optimal asymptotic EOC also of Fig. \ref{f:kellogg}.

\end{document}